\def\ff{{\bf F}}
\def\rr{{\bf R}}
\def\co{\colon\thinspace}
\def\cs{\mathop{\#}}
\def\cala{\mathcal{A}}
 \def\calc{\mathcal{C}}
\def\calm{\mathcal{M}}
\def\calg{\mathcal{G}}
\def\calt{\mathcal{C}_{TS}}
\newcommand{\fig}[2] { \includegraphics[scale=#1]{#2} }
\newcommand{\nbdk}{\nu K}
\def\bull{\vbox{\hrule\hbox{\vrule\kern3pt\vbox{\kern6pt}\kern3pt\vrule}\hrule}}
\newcommand{\cm}{\cdot}
\newcommand{\F}{\mathbb F}
\newcommand{\Z}{\mathbb Z}
\newcommand{\A}{\mathbb A}
\newcommand{\B}{\mathbb B}
\newcommand{\C}{\mathbb C}
\newcommand{\Q}{\mathbb Q}
\newcommand\relspinc{\underline{\spinc}}
\newcommand\x{\mathbf x}
\newcommand\ModFlow{\mathcal M}
\newcommand\ModSphere{\ModFlow\left({\mathbb S}\longrightarrow 
\Sym^{g-1}(\Sigma_{1})\times \Sym^2(\Sigma_{2})\right)}
\newcommand\ModSpheres\ModSphere
\newcommand\CF{CF}
\newcommand\CFa{\widehat{CF}}
\newcommand\CFp{\CFb}
\newcommand\CFm{\CF^-}
\newcommand\CFb{CF^+}
\newcommand\UnparModSp{\widehat \ModSp}
\newcommand\UnparModFlow\UnparModSp
\newcommand\Mod\ModSp
\newcommand{\cald}{{\mathcal D}}
\newcommand\spin{\mathfrak s}
\newcommand{\spinc}{\ifmmode{{\mathfrak s}}\else{${\mathfrak s}$\ }\fi}
\newcommand{\spinct}{\mathfrak t}
\newcommand\ModMaps{\mathcal M}
\newcommand\ModSp\ModMaps
\newcommand\Ta{{\mathbb T}_{\alpha}}
\newcommand\Tb{{\mathbb T}_{\beta}}
\newcommand\alphas{\mbox{\boldmath$\alpha$}}
\newcommand\betas{\mbox{\boldmath$\beta$}}
\newcommand\spincrel\relspinc
\newcommand\CFK{CFK}
\newcommand\CFKa{\widehat\CFK}
\newcommand\SpinC{\mathrm{Spin}^c}
\newcommand\Dual{\mathcal D}
\newcommand\Duality\Dual
\newcommand\ons{Ozsv{\'a}th and Szab{\'o}}
\newcommand\os{Ozsv{\'a}th-Szab{\'o}}
\def\zz{{\mathbb Z}}
\def\ff{{\mathbb F}}
\def\qq{{\mathbb Q}}
\def\rr{{\mathbb R}}
\def\co{\colon\thinspace}
\def\cs{\mathop{\#}}
\def\caln{\mathcal{N}}
\def\cala{\mathcal{A}}
\def\calc{\mathcal{C}}
\def\calm{\mathcal{M}}
\def\calg{\mathcal{G}}
\def\co{\colon}
\def\Sp{Spin$^c$}
\DeclareMathOperator{\cfk}{\it CFK}
\DeclareMathOperator{\hfk}{\it HFK}
\DeclareMathOperator{\hf}{\it HF}
\DeclareMathOperator{\Wd}{w}
\newtheorem{thm}{Theorem}
\newtheorem{theorem}{Theorem}[section]
\newtheorem{lemma}[theorem]{Lemma}
\newtheorem{corollary}[theorem]{Corollary}
\newtheorem{prop}[theorem]{Proposition}
\theoremstyle{definition}
\newtheorem{definition}[theorem]{Definition}
\def\co{\colon\thinspace}
\numberwithin{equation}{section}
\begin{document}

\title{Topologically slice knots of smooth concordance order two}

\author{Matthew Hedden}\author{Se-Goo Kim}\author{Charles Livingston}

\address{Matthew Hedden: Department of Mathematics, Michigan State University, East Lansing, MI 48824 }
\email{mhedden@math.msu.edu}

\address{Se-Goo Kim: Department of Mathematics and Research Institute for Basic Sciences, Kyung Hee University, Seoul 130-701, Korea }
\email{sgkim@khu.ac.kr}

\address{Charles Livingston: Department of Mathematics, Indiana University, Bloomington, IN 47405 }
\email{livingst@indiana.edu}

\thanks{This work was supported in part by the National Science Foundation under Grants  0707078, 0906258, CAREER 1150872,  and 1007196, by an Alfred P. Sloan Research Fellowship,  and by the National Research Foundation of Korea grant funded by the Korea government (MEST) NRF-2011-0012893.\\ \today}

%%%%%%%ABSTRACT%%%%%%%%%%%%%%

 \begin{abstract}  The existence of topologically slice knots that are of infinite order in the knot concordance group followed from  Freedman's work on topological surgery and    Donaldson's gauge theoretic approach to four-manifolds.  Here, as an application of Ozsv\'ath and Szab\'o's Heegaard Floer theory, we show the existence of an infinite subgroup of the smooth concordance group generated by  topologically slice knots of concordance order two.  In addition, no nontrivial element in this subgroup can be represented by a knot with Alexander polynomial one.  \end{abstract}

\maketitle

%%%%%%%SECTION%%%%%%%%%%%%%%

\section{Introduction.}\label{sectionintroduction}

In~\cite{fm} Fox and Milnor defined the smooth knot concordance group $\calc$.  Their proof that $\calc$ is infinite quickly yields an infinite family of distinct elements of order two.    Results of Murasugi~\cite{mura} and Tristram~\cite{tris} demonstrated that $\calc$ also contains a free summand of infinite rank.  This work culminated in Levine's construction~\cite{le} of a surjective homomorphism $\phi\co \calc \to \calg$, where $\calg$ is an algebraically defined group isomorphic to the infinite direct sum $\zz^\infty \oplus \zz_2^\infty \oplus \zz_4^\infty$.

Classical surgery theory allowed Levine to  prove that $\phi$ is an isomorphism in high (odd) dimensions.  The first distinction between classical and  high-dimensional concordance was seen in the work of Casson and Gordon~\cite{cg}, who showed  that the kernel of $\phi$ is nontrivial; this was followed by a proof by Jiang~\cite{ji} that $\ker(\phi)$ contains a subgroup isomorphic to $\zz^\infty$.  In~\cite{liv2} it was shown that $\ker(\phi)$ also contains a subgroup isomorphic to $\zz_2^\infty$.
 
The work  of Donaldson~\cite{d} and Freedman \cite{fr, freedman-quinn} on smooth and topological $4$--manifolds,  respectively, revealed further subtlety present in low-dimensional concordance.   One can  define a concordance group $\calc^{top}$ in the topological, locally flat, category. The distinction between the smooth and topological categories is highlighted by considering the kernel  of the natural surjection $\calc  \to \calc^{top}$.  This kernel is generated by topologically slice knots, and we denote it $\calt$.   To underscore the importance of $\calt$ it should be mentioned that a single non-trivial element in $\calt$ implies the existence of a smooth $4$--manifold homeomorphic, but not diffeomorphic, to $\rr^4$.  Several people, including Akbulut and Casson, observed that   the results  of Donaldson and Freedman  can be used to produce non-trivial elements in $\calt$ (see~\cite{cochrangompf}),  but until recently little was known about the structure of $\calt$.     Using   techniques developed by Donaldson~\cite{d} and later enhanced by Fintushel-Stern \cite{FS} and Furuta \cite{Furuta},  Endo~\cite{e}   proved that $\calt$ contains a subgroup isomorphic to $\zz^\infty$ (see also \cite{hk,hlr, Hom} for other constructions of infinite rank free subgroups).   Techniques derived from Heegaard Floer theory and Khovanov homology (specifically the Rasmussen invariant~\cite{RasSlice}) were used to show that $\calt$ contains a {\it summand} isomorphic to $ \zz ^3$~\cite{liv3,  liv4, mo}. Recently that work has been superseded by work of Hom~\cite{Hom3} which applies a deep analysis of the structure of   Heegaard Floer complexes to  construct a summand isomorphic to $\zz^\infty$.   

With the abundance of $2$--torsion in $\calc$, one might expect that $\calt$ likewise has such torsion.  However, producing torsion classes in $\calt$ is  quite difficult since one needs a manifestly smooth invariant to detect them.  Many of the known techniques for analyzing $\calt$, however,  fail at detecting torsion classes   (for instance, ~the \os\ \cite{FourBall} or Rasmussen \cite{RasSlice} concordance invariants).   Our main result shows that  like the concordance group,  $\calt$ has an abundance of $2$--torsion.

\begin{thm}\label{maintheorem} {\it $\calt$ contains a subgroup isomorphic to $\zz_2^\infty$.}
\end{thm}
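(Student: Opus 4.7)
The plan is to exhibit an infinite family of topologically slice knots $\{K_n\}_{n \geq 1}$, each of order two in the smooth concordance group, and then to prove the classes $[K_n] \in \calt$ are linearly independent over $\zz/2$. First I would construct each $K_n$ so that (i) $\Delta_{K_n}(t) = 1$, which by Freedman's theorem places $K_n$ in $\calt$, and (ii) $K_n$ is concordant to its concordance inverse $-K_n$, forcing $2K_n = K_n \# -K_n = 0$ in $\calc$ via the standard ribbon disk. A natural route is to apply a pattern of Alexander polynomial $1$---for example a combination of positive and negative Whitehead doubles---to well-chosen auxiliary seed knots $J_n$, arranged so that the resulting $K_n$ is negatively amphichiral up to concordance.

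For the detection, observe that the usual concordance invariants $\tau$, $s$, $\upsilon$, and $\epsilon$ are concordance homomorphisms into torsion-free groups, so they vanish on every finite-order class in $\calc$ and cannot help here. The natural replacement is the collection of Heegaard Floer correction terms $d(\Sigma_2(K), \mathfrak{s})$ of the double branched cover $\Sigma_2(K)$, taken over all \Sp\ structures $\mathfrak{s}$. These are smooth rational homology cobordism invariants that behave additively under connected sum but are \emph{not} concordance homomorphisms of the underlying knot, so they are sensitive to smooth torsion. Specifically, if $\#_{i \in S} K_{n_i}$ were smoothly slice, then $\#_{i \in S} \Sigma_2(K_{n_i})$ would bound a smooth rational homology ball, and the Ozsv\'ath--Szab\'o obstruction would force a suitable sum of $d$-invariants to vanish along a metabolizing family of \Sp\ structures.

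To complete the independence argument, I would tailor the seeds $J_n$ so that each $\Sigma_2(K_n)$ is a $3$-manifold whose correction terms are explicitly computable---a plumbed manifold, a branched cover of an alternating knot, or an $L$-space surgery---and so that the resulting $d$-invariants satisfy a dominance condition: for each $n$, some $d(\Sigma_2(K_n), \mathfrak{s}_n)$ strictly exceeds the absolute value of any combined contribution arising from the lower-index knots. A maximum-index argument then forbids every nontrivial $\zz/2$-linear relation among the $[K_n]$ in $\calt$.

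The hardest part will be simultaneously reconciling three competing requirements: trivial Alexander polynomial (to keep $K_n \in \calt$), smooth concordance order two (via amphichiral symmetry), and a double branched cover whose $d$-invariants are both computable and ``independent'' across $n$. The symmetric pattern used to enforce amphichirality tends to induce a symmetry on $\Sigma_2(K_n)$ that cancels precisely the $d$-invariants one wishes to detect; the principal technical challenge is to design a construction in which the knot-level amphichiral symmetry does not impose enough symmetry on the branched cover to destroy the non-triviality of the Heegaard Floer obstruction.
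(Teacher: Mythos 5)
Your outline correctly identifies the broad strategy the paper uses: negatively amphichiral knots for two-torsion, double branched covers and Ozsv\'ath--Szab\'o correction terms for detection, and the fact that $\tau$, $s$, etc.\ are useless for torsion classes. However, there is a decisive gap in step (i), where you insist that each $K_n$ have $\Delta_{K_n}(t)=1$. If $\Delta_{K_n}(t)=1$ then $|\Delta_{K_n}(-1)|=1$, so $\Sigma_2(K_n)$ is a $\zz$--homology sphere; it carries a unique $\SpinC$ structure, and your entire detection apparatus collapses to the single rational number $d(\Sigma_2(K_n))$. Now observe that this single number is forced to vanish by exactly the two-torsion you built in: if $2K_n$ is smoothly slice then $\Sigma_2(K_n)\#\Sigma_2(K_n)$ bounds a rational homology ball, so $2\,d(\Sigma_2(K_n))=0$, hence $d(\Sigma_2(K_n))=0$. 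In other words, for order-two knots with trivial Alexander polynomial the branched-cover $d$--invariant is identically zero, and no ``dominance'' arrangement of seeds $J_n$ can rescue this. This is precisely the obstruction you were worried about in your last paragraph, and in the $\Delta=1$ setting it is fatal, not merely a technical nuisance.

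The paper sidesteps this by \emph{not} arranging $\Delta_{K_n}=1$. Instead it builds $K_{J,n}$ with a pattern that forces $H_1(\Sigma_2(K_{J,n}))\cong\zz_{4n^2+1}$ (so $\Delta\neq 1$), and enters $\calt$ a different way: the bands are tied in topologically slice knots $D_k$ (Whitehead doubles), so $K_{D_k,n}$ is topologically concordant to $K_{U,n}$, and $K_{D_k,n}\#K_{U,n}$ is topologically slice even though its Alexander polynomial is not trivial. The nontrivial $H_1$ of the branched cover is essential: it provides a $\zz_{4n^2+1}\oplus\zz_{4n^2+1}$ worth of $\SpinC$ structures, a metabolizer condition coming from a putative rational homology ball, and---after comparing $K_{D_k,n}$ against the amphichiral model $K_{U,n}$---a difference of $d$--invariants that equals $-2k$ at a specific $\SpinC$ structure. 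The final step uses number theory (Iwaniec's almost-primes theorem) to guarantee $4n^2+1$ has at most two prime factors, so that there are at most four candidate metabolizers, and then chooses $k$ large enough to rule out all of them. If you want to salvage your plan, replace the requirement $\Delta_{K_n}=1$ with ``the local knots used in the pattern are topologically slice,'' keep a nontrivial branched-cover homology, and carry out a metabolizer analysis rather than a single-$d$-invariant dominance argument.
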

\vskip.1in

  We conjecture that, in line with Hom's result,  a  summand isomorphic to $\zz_2^\infty$  exists, but current tools  seem insufficient to prove this.

Freedman's work~\cite{fr, freedman-quinn} implied that all knots of Alexander polynomial one are topologically slice, and these knots provided all the early examples of nontrivial elements in $\calt$.  However, in~\cite{hlr} it was shown that $\calt$ in fact contains a subgroup isomorphic to $\zz^\infty$ with no nontrivial element represented by a knot with Alexander polynomial one.  Here we extend this to 2--torsion.   Let $\calc_\Delta$ denote the subgroup of $\calc$ generated by knots with Alexander polynomial one.

\begin{thm}\label{poly1thm} {\it The subgroup from Theorem \ref{maintheorem} can be chosen so that no nontrivial member is representable by a knot with Alexander polynomial one.  In particular, the group $\calc_{TS}/\calc_\Delta$ contains a subgroup isomorphic to  $\zz_2^\infty$.}
\end{thm}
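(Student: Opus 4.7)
\medskip

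\noindent\textbf{Proof proposal.} The plan is to reuse (or lightly modify) the generators $\{K_i\}_{i=1}^{\infty} \subset \calt$ produced in Theorem~\ref{maintheorem}, and then upgrade the detecting invariant so that it descends to a homomorphism on the quotient $\calc/\calc_\Delta$. Concretely, I would try to exhibit a homomorphism $\Phi \co \calc \to A$, valued in some abelian group $A$, such that (i) $\Phi$ distinguishes all nontrivial $\zz_2$--linear combinations $\sum \epsilon_i [K_i]$, and (ii) $\Phi$ vanishes on every knot with Alexander polynomial one. Because $\Phi$ is a homomorphism and $\calc_\Delta$ is generated by such knots, condition (ii) forces $\Phi$ to vanish on all of $\calc_\Delta$. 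The induced map $\calc/\calc_\Delta \to A$ then detects the image of $\langle K_i \rangle$ as a copy of $\zz_2^\infty$, which yields both assertions of the theorem.

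The natural candidate for $\Phi$ comes from Ozsv\'ath--Szab\'o correction terms of the double branched cover. For a knot $K$, one studies $d$--invariants of $\Sigma_2(K)$ evaluated on $\text{Spin}^c$ structures, and builds $\Phi(K)$ out of \emph{differences} $d(\Sigma_2(K), \mathfrak{s}) - d(\Sigma_2(K), \mathfrak{s}')$ (or, equivalently, out of $d$--values at non-spin $\text{Spin}^c$ structures, normalized against the spin one). The structural observation making condition (ii) automatic is that if $\Delta_L(t) = 1$ then $H_1(\Sigma_2(L); \zz) = 0$, so $\Sigma_2(L)$ carries a unique $\text{Spin}^c$ structure and any such difference vanishes identically. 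Additivity is built in: $\Sigma_2(K_1 \cs K_2) = \Sigma_2(K_1) \cs \Sigma_2(K_2)$, and $d$--invariants are additive under connected sum, so $\Phi$ is a homomorphism on $\calc$.

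With this framework, the proof has three steps. First, verify that the specific invariant used in the proof of Theorem~\ref{maintheorem} to separate the classes $[K_i]$ and detect that $2[K_i]=0$ can be expressed (perhaps after a mild reorganization) as such a difference of $d$--invariants, hence vanishes on $\calc_\Delta$. Second, confirm that the generators $K_i$ chosen for Theorem~\ref{maintheorem} can already be taken to have $\Delta_{K_i}(t) \neq 1$; if not, replace them by satellite or cabled analogues that preserve topological sliceness (e.g.~via twisted doubling operations that retain a Freedman-style slicing in the topological category) while introducing a nontrivial Alexander polynomial. Third, combine the two to conclude that the map $\langle K_i \rangle \to \calc/\calc_\Delta \to A$ is injective, yielding the $\zz_2^\infty$ subgroup of $\calc_{TS}/\calc_\Delta$.

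The main obstacle I anticipate is Step 1: torsion classes in $\calt$ are invisible to most of the standard concordance invariants ($\tau$, $\epsilon$, $s$, and the $d$--invariant of the spin structure all vanish on finite order classes), so one is essentially forced to work with $d$--invariants at non-spin $\text{Spin}^c$ structures on $\Sigma_2(K)$. Ensuring that the explicit invariant detecting the $K_i$ really does have the "difference-of-$\text{Spin}^c$-structures" shape, rather than one that couples spin and non-spin data in a way that depends on $H_1(\Sigma_2(K))$, is the delicate bookkeeping step; once carried out, Theorem~\ref{poly1thm} falls out formally from Theorem~\ref{maintheorem} together with the vanishing of $\Phi$ on homology-sphere branched covers.
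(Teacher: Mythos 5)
Your high-level intuition is right about \emph{why} Alexander polynomial one knots do not obstruct the argument: $\Delta_L = 1$ forces $H_1(\Sigma_2(L)) = 0$, so the branched double cover carries a unique $\SpinC$ structure and contributes no constraint. That is indeed the mechanism the paper exploits. However, the proposed implementation via a concordance homomorphism $\Phi\co \calc \to A$ built out of differences of $d$--invariants has a genuine gap, and the step you flag as ``delicate bookkeeping'' is in fact a fundamental obstacle rather than a technical nuisance.

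The branched-cover $d$--invariant slicing obstruction is not a homomorphism on $\calc$, and there is no natural way to make it one. What one actually gets (Proposition~\ref{propd} together with the Casson--Gordon metabolizer condition) is an existential, quantified statement: \emph{if $K$ is slice, then there exists a metabolizer $\calm \subset H_1(M(K))$ such that $d(M(K), \spinc_z) = 0$ for all $z \in \calm$}. Two concordant knots can have wildly different $H_1$ of their branched covers, so there is no consistent identification of $\SpinC$ structures under which a ``difference of $d$--invariants'' descends to a well-defined function on concordance classes, let alone an additive one. Trying to normalize against the spin structure does not help either: the metabolizer is unknown, it need not contain the spin structure in a useful way, and the difference against the conjugate structure vanishes identically because $d(M,\overline{\spinc}) = d(M,\spinc)$.

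The paper avoids this by never converting the obstruction into a homomorphism. Instead it proves the strong non-sliceness statement Theorem~\ref{main1}: $K_{D_{k_n},n}\cs K_{U,n}\cs L$ is not slice for \emph{any} $L$ with $|H_1(M(L))|$ coprime to $4n^2+1$. The proof quantifies over all possible metabolizers directly (Lemma~\ref{theoremmet2} classifies them, Lemma~\ref{lemma-b} reduces to a finite list of $b$'s via the choice of $\caln$ in Proposition~\ref{defineNprop}, and Proposition~\ref{prop:dmn} then rules them all out for suitable $k_n$). Corollary~\ref{maincor} then derives Theorem~\ref{poly1thm} by a short bootstrapping argument: if a sum of the generators were in $\calc_\Delta$, peel off the smallest index and feed the rest, together with $-K$ for the hypothetical determinant-one $K$, into Theorem~\ref{main1} as the knot $L$. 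Your Steps 2 and 3 are fine in spirit, but Step 1 as stated cannot be carried out; to repair the proposal you would need to replace the homomorphism framework with the ``for all metabolizers'' argument, which is precisely what the paper does.
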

This theorem can be strengthened by replacing the subgroup of knots generated by   Alexander polynomial one knots with  the subgroup generated by knots with determinant one. 
\vskip.1in

To prove these theorems  we consider  knots $K_{J,n}$ as illustrated in Figure~\ref{figureknot}.  These knots are defined to be the boundaries of surfaces built by adding two bands  to a disk as shown: the bands are tied in knots $J$ and $-J$ and have $n$ and $-n$ full twists, where $n>0$.   An important special case occurs when $U$ is the unknot, whereby $K_{U,1}$ is the figure eight knot.  We have the following easy proposition:

\begin{prop}   $K_{J,n}$ is negative amphicheiral {\em(}$K_{J,n} = - K_{J,n}${\em)}; in particular,  $2 K_{J,n} = 0 \in \calc$.  If $J_1$ and $J_2$ are concordant, then  $K_{J_1,n}$ and $  K_{J_2,n}$ are concordant.
\end{prop}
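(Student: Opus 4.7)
The plan is to prove both assertions by elementary three-dimensional arguments; no Heegaard Floer input is needed. Negative amphicheirality will come from a symmetry of the spanning surface pictured in Figure~\ref{figureknot}, and naturality in $J$ will follow from the standard principle that satellite-type operations descend to the concordance group.

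For the amphicheirality claim, I would exhibit an orientation-reversing involution $\sigma\co S^3 \to S^3$ that carries the surface $F$ (a disk with two bands) with $\partial F = K_{J,n}$ to itself, and reverses the induced orientation on $\partial F$. Concretely, $\sigma$ is a reflection that interchanges the two bands. Under any orientation-reversing diffeomorphism of $S^3$, a band tied in the knot $J$ with $n$ full twists is sent to a band tied in the mirror image $\overline{J}$ with $-n$ full twists. Since the knot type into which a band is tied is an unoriented invariant of the band, the pair $(\overline{J},-n)$ agrees as band data with $(\overline{J}^{\,r},-n)=(-J,-n)$. Consequently, after $\sigma$ swaps the two bands, Band $1$ (originally $(J,n)$) lands at the position of Band $2$ with data matching the original $(-J,-n)$, and symmetrically Band $2$ lands on Band $1$. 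Hence $\sigma(F)=F$ setwise, and since $\sigma$ reverses the orientation of $S^3$ it reverses the induced orientation on $\partial F$. This yields $K_{J,n} = -K_{J,n}$ as oriented knots in $S^3$, so in particular $2K_{J,n} = K_{J,n} \# (-K_{J,n}) = 0 \in \calc$, using the standard ribbon disk for $K\#(-K)$.

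For the concordance statement, view $K_{J,n}$ as the result of a satellite-type construction: there is a fixed template consisting of a disk with two bands whose cores are two unknotted circles $L_1,L_2$, and $K_{J,n}$ is obtained from this template by tying $J$ into $L_1$ and $-J$ into $L_2$. A concordance $C\subset S^3\times[0,1]$ from $J_1$ to $J_2$ induces, by mirror-reversing, a concordance from $-J_1$ to $-J_2$. Placing these two concordances inside disjoint tubular neighborhoods of the pattern cores $L_1, L_2$ in $S^3\times[0,1]$ produces a properly embedded annulus realizing a concordance from $K_{J_1,n}$ to $K_{J_2,n}$.

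The principal obstacle, such as it is, lies in the bookkeeping of the first part: verifying that the orientation-reversing involution realizes $K_{J,n} = -K_{J,n}$ (negative amphicheirality) rather than merely $K_{J,n} = \overline{K_{J,n}}$ or $K_{J,n} = K_{J,n}^{\,r}$, which requires tracking how $\sigma$ acts on the co-orientation of $F$ and hence on the boundary orientation. The degenerate case $J=U$, $n=1$ recovers the figure-eight knot, which is known to be fully amphicheiral, and serves as a reassuring sanity check.
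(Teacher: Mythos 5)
The second half of your argument — that satellite operations descend to concordance, so a concordance from $J_1$ to $J_2$ (together with its mirror from $-J_1$ to $-J_2$) placed in tubular neighborhoods of the two band cores yields a concordance from $K_{J_1,n}$ to $K_{J_2,n}$ — is correct and is exactly the route the paper takes.

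The amphicheirality argument, however, has a genuine gap, and it is precisely the one you flag at the end. The inference ``$\sigma(F)=F$ setwise, and since $\sigma$ reverses the orientation of $S^3$ it reverses the induced orientation on $\partial F$'' is false as stated: the boundary orientation of $\partial F$ is induced by an orientation of $F$, not by the ambient orientation of $S^3$. An orientation-reversing involution of $S^3$ preserving a spanning surface $F$ can restrict to either an orientation-preserving or an orientation-reversing map of $F$ (for example, reflection of $S^3$ through the plane of a flat disk bounding the unknot fixes the disk pointwise and hence preserves orientation on its boundary). Which case occurs determines whether you have proved $K_{J,n}=\overline{K_{J,n}}$ (positive amphicheirality) or $K_{J,n}=-K_{J,n}=\overline{K_{J,n}^{\,r}}$ (negative amphicheirality), and only the latter yields $2K_{J,n}=0$ in $\calc$ via the Fox--Milnor ribbon disk for $K\#(-K)$. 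Since for general $J$ the knot $K_{J,n}$ is \emph{not} positive amphicheiral, getting the sign right is the entire content of the claim, and you leave it unverified. The paper sidesteps this bookkeeping entirely by exhibiting an explicit orientation-preserving isotopy (pull the bottom band through the central rectangular region, then rotate $180^{\circ}$ about a vertical axis in the plane of the page) that carries the diagram of $K_{J,n}$ to a diagram of $-K_{J,n}$; because it is an isotopy rather than an ambient symmetry, there is no ambiguity about which mirror/reverse one has produced. To repair your argument you would need to track the action of $\sigma$ on a chosen orientation (or co-orientation) of $F$ and confirm it is reversed; as written the step does not go through.
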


\noindent The amphicheirality of $K_{J,n}$ can be demonstrated just as for the case  $J=U$.  Indeed, an isotopy to $-K_{J,n}$ is obtained by pulling the bottom band through the rectangular region and then rotating the knot $180^\circ$ about a vertical axis running down the center of the page.  The second part of the lemma follows from the fact that satellite operations  descend to concordance, and $K_{J,n}$ is a two-fold satellite operation with companions $J$ and $-J$.  The proposition allows for the immediate construction of  elements of order at most two in $\calt$.

\begin{corollary}  For $U$ the unknot, $2(K_{J,n} \# K_{U,n}) = 0 \in \calc.$  If $J$ is topologically slice, then $ K_{J,n} \# K_{U,n}$ is topologically slice; that is,   $ K_{J,n} \# K_{U,n} \in \calt$.
\end{corollary}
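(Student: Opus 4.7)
The plan is that both assertions should fall out almost immediately from the proposition that precedes the corollary. For the torsion statement, I would use the fact that connected sum is the operation in $\calc$, so that $2(K_{J,n} \# K_{U,n}) = 2K_{J,n} \# 2K_{U,n}$ in $\calc$. The proposition gives $2K_{J,n} = 0$ and $2K_{U,n}=0$ in $\calc$, so the sum of two slice knots is slice, yielding the first claim.

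For the second claim, the key observation is that the second half of the proposition, namely that $J_1$ concordant to $J_2$ implies $K_{J_1,n}$ concordant to $K_{J_2,n}$, holds in the topological locally flat category as well as smoothly. This is because the proof given — that $K_{J,n}$ arises from a two-fold satellite construction with companions $J$ and $-J$, and that satellite operations descend to concordance classes — is purely topological and does not depend on smoothness. Thus if $J$ is topologically slice, so that $J$ is topologically concordant to $U$, then $K_{J,n}$ is topologically concordant to $K_{U,n}$, and therefore $K_{J,n} \# K_{U,n}$ is topologically concordant to $K_{U,n} \# K_{U,n} = 2 K_{U,n}$.

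To finish, I would note that the first part of the proposition, applied with $J = U$, gives $2 K_{U,n} = 0 \in \calc$, so $2K_{U,n}$ bounds a smooth disk in $B^4$. A smooth slice disk is in particular a topological locally flat slice disk, so $2K_{U,n}$ is topologically slice. Combined with the previous paragraph, $K_{J,n} \# K_{U,n}$ is topologically concordant to a topologically slice knot, hence itself topologically slice, i.e.\ lies in $\calt$.

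There is essentially no obstacle here; the corollary is a direct packaging of the proposition. The one point that merits a sentence of justification is precisely the observation above, that the amphicheirality argument and the satellite concordance argument of the proposition are purely topological, so the proposition's conclusions remain valid when ``concordant'' is read in the locally flat category. Once that is noted, both parts of the corollary are one-line consequences.
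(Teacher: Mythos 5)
Your proof is correct and matches the paper's intent exactly; the paper leaves this corollary without an explicit proof, treating it as an immediate consequence of the preceding proposition, and your argument is precisely the intended one. Your added remark that the amphicheirality and satellite-concordance arguments are purely topological (so the proposition holds in the locally flat category) is the right point to make explicit, since that is the one step not literally stated by the proposition as written.
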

 
\begin{figure}[h]
\fig{.5}{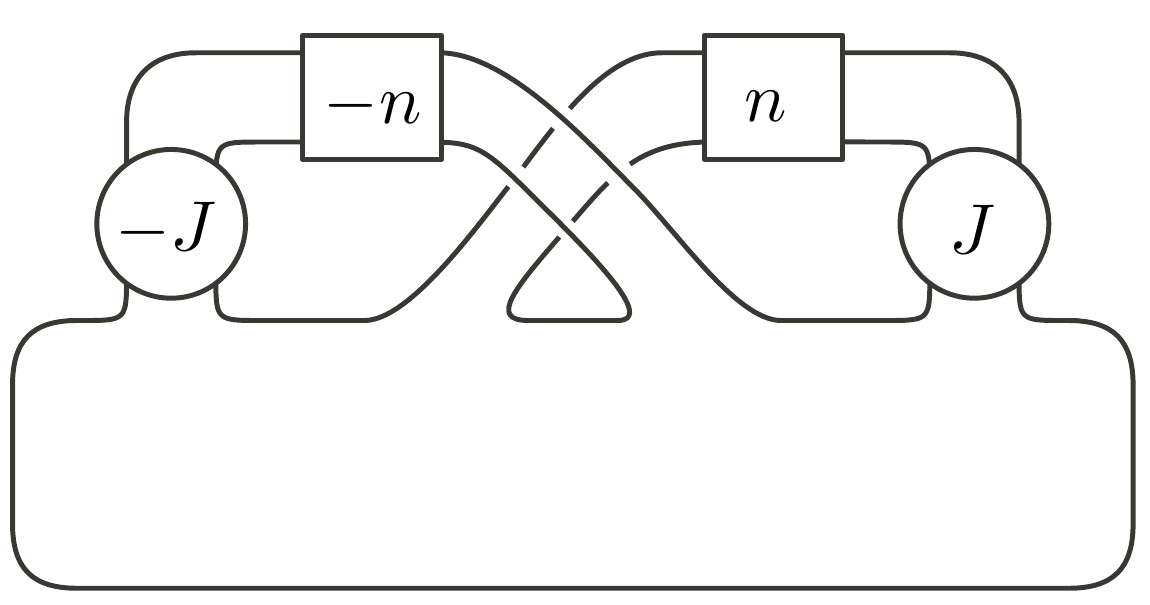}
\caption{}\label{figureknot}
\end{figure}
 
Let $D   $ denote the untwisted Whitehead double of the right-handed trefoil knot, $T_{2,3}$, and let $D_k$ denote $kD$. The knots $ K_{D_k,n} \# K_{U,n} $ provide the subgroups appearing in Theorem~\ref{maintheorem} and Theorem~\ref{poly1thm}.

\begin{thm}\label{distinctthm} There exists an infinite set of pairs of positive integers $\{(k,n)\}$ with the property that the corresponding set of  knots $\{K_{{D_k},n}\# K_{U,n}\}$ generates a subgroup of $\calc_{TS}$ and  of $\calc_{TS}/\calc_\Delta$ as described in Theorems~\ref{maintheorem} and~\ref{poly1thm}.
\end{thm}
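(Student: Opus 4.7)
The strategy is to detect the knots $K_{D_k,n}\# K_{U,n}$ using Ozsv\'ath--Szab\'o correction terms ($d$-invariants) of their double branched covers. Write $K(k,n) := K_{D_k,n}\# K_{U,n}$. By the preceding proposition each $K(k,n)$ has order dividing two in $\calc$, so to establish Theorem~\ref{distinctthm} it suffices to show that no nontrivial connected sum $K_S$ of distinct elements from a well-chosen infinite family is smoothly slice, and for Theorem~\ref{poly1thm} that no such $K_S$ is smoothly concordant to a knot with Alexander polynomial one. If $K_S$ were slice, then $Y_S := \Sigma_2(K_S)$, which decomposes as a connected sum of the double covers $\Sigma_2(K_{D_k,n})$ and $\Sigma_2(K_{U,n})$, would bound a smooth rational homology $4$--ball, so the $d$-invariants of $Y_S$ must vanish along a metabolizer for the linking form on $H_1(Y_S;\zz)$.

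The first step is to understand $\Sigma_2(K_{J,n})$. From the genus-one Seifert surface in Figure~\ref{figureknot} one reads off a symmetrized Seifert form that is independent of $J$, so $|H_1(\Sigma_2(K_{J,n}))| = 4n^2+1$; for $J = U$ this is a lens space. For general $J$ the double cover is built from $\Sigma_2(K_{U,n})$ by a surgery construction in which lifts of the band cores (isotopic to $J$ and $-J$ in $S^3$) appear as the surgery curves, presenting $\Sigma_2(K_{D_k,n})$ in a form suited to Ozsv\'ath--Szab\'o surgery formulas with the $k$-dependence entering through $D_k$.

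The technical heart of the argument is a computation of $d(\Sigma_2(K_{D_k,n}),\mathfrak{s})$ over all spin$^c$ structures $\mathfrak{s}$. Using this surgery description together with the Ozsv\'ath--Szab\'o mapping-cone formula and detailed information about $\cfk^\infty(D)$ (building on Hedden's analysis of the Heegaard Floer complexes of Whitehead doubles, together with its behaviour under the iterated connect sums $D_k = kD$), one derives formulas for $d(\Sigma_2(K_{D_k,n}),\mathfrak{s})$ as explicit functions of $k$ and $n$. The key output is: for each $n$ there is a distinguished spin$^c$ structure $\mathfrak{s}(n)$ on which the difference $d(\Sigma_2(K_{D_k,n}),\mathfrak{s}(n)) - d(\Sigma_2(K_{U,n}),\mathfrak{s}(n))$ is nonzero and grows without bound as $k\to\infty$. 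This computation, together with the identification of a spin$^c$ structure that sees $k$, is the principal obstacle.

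With such a formula in hand, the pairs $(k_i,n_i)$ are selected inductively. Choose the $n_i$ so that the orders $4n_i^2+1$ are pairwise coprime; the linking form on $Y_S$ then splits as an orthogonal sum across the index $i$, forcing any metabolizer to restrict to an isotropic subgroup of each summand. At step $i$, pick $k_i$ so large that the contribution at $\mathfrak{s}(n_i)$ coming from $K(k_i,n_i)$ cannot be cancelled on any such isotropic subgroup by the bounded contributions from $K(k_j,n_j)$ with $j<i$. This forces some $d$-invariant of $Y_S$ on the putative metabolizer to be nonzero, contradicting sliceness. For Theorem~\ref{poly1thm}, each Alexander polynomial one knot contributes a $\zz$--homology sphere summand to $\Sigma_2$ and hence only a single scalar shift to the $d$-invariant package; replacing absolute values by differences $d(\mathfrak{s}) - d(\mathfrak{s}')$ of spin$^c$ structures on the same non-homology-sphere factor absorbs that shift, and the same induction then separates the classes in $\calc_{TS}/\calc_\Delta$.
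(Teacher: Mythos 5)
Your high-level architecture matches the paper: identify the double cover, compute $d$-invariants via a surgery formula adapted to Whitehead doubles, locate a spin$^c$ structure where the difference of $d$-invariants is $-2k$, and use coprimality of the orders $4n_i^2+1$ to split the linking form. However, the final step of your induction contains a genuine conceptual error. You propose picking $k_i$ so large that the contribution from $K(k_i,n_i)$ ``cannot be cancelled on any such isotropic subgroup by the bounded contributions from $K(k_j,n_j)$ with $j<i$.'' But no such cancellation can occur: because $\calm = \bigoplus_i \calm_i$ and $0\in\calm_j$ for every $j$, one may set $\mathfrak{s}_j=0$ for $j\neq i$ and find that the vanishing constraint on the $i$-th factor is $d(Y_i,\mathfrak{s}_i) = -\sum_{j\neq i} d(Y_j,\spinc_0)$. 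The right-hand side is not merely ``bounded''---it is zero, because the complementary sum is concordant to the order-two knot $-K(k_i,n_i)$, so twice it is slice and the Spin $d$-invariant vanishes. This is precisely the content of the paper's Lemma~\ref{lemma-a}. Your proof neither invokes this vanishing nor correctly isolates the constraint, so the induction as stated has no mechanism by which making $k_i$ large helps.

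A second, related gap: once the constraint decouples, you still need to classify the metabolizers of the linking form on $(\zz_{4n^2+1})^2$. Being ``isotropic'' is not enough; one needs that (for $N = 4n^2+1$ square-free) every metabolizer is cyclic, generated by $(1,b)$ with $1+b^2\equiv 0 \pmod{N}$ (the paper's Lemma~\ref{theoremmet2}), and then that the hypothesis that $N$ is a product of at most two primes bounds the number of such $b$ by four. That finite bound---not unbounded growth of $k_i$---is what lets one pick $k_n$ in the window $0\le k < n/2$ (the window is forced by the large-surgery hypothesis, so $k_n$ cannot be made arbitrarily large relative to $n$). With $n\ge 9$ this window has at least five elements, one of which must avoid the at most four allowed values of $-2k$. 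Your proposal contains none of this counting, and without it the contradiction does not materialize. The treatment of the Alexander-polynomial-one case has a similar issue: the paper handles it by folding the polynomial-one knot into $L$ and re-using the fact that the resulting $L$ has order two, rather than by ``absorbing'' a shift via differences of $d$-invariants.
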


  The proof of Theorem~\ref{distinctthm} is presented in Section~\ref{sectionexamples} after necessary background is given in Section~\ref{prelimsec}.  The proof depends on a detailed analysis of the Heegaard Floer $d$--invariants of the branched cover of $S^3$ branched over $K_{D_k, n}$.  That analysis occupies Sections~\ref{sectionhf1},~\ref{sectionhf2} and~\ref{computesection}.  Some of the most technical work has been placed in appendices.

\vskip.1in
\noindent{\bf Acknowledgements}  We are indebted to the referees for their unusually thorough and thoughtful reading of the original manuscript.  Their contribution significantly enhanced the clarity and accuracy of the presentation.
 
\vskip.2in

%%%%SECTION %%%%%%%%
\section{Preliminary constructions}\label{prelimsec}

%%%%SUBSECTION %%%%%%%%
\subsection{Algebraic slicing obstructions.}

The proofs of our main results are based on considering two-fold branched covers of $S^3$ over $ K_{J,n}$, which we denote $M(K_{J,n})$.   According to~\cite{akbulutkirby}, $M(K_{J,n})$ has a surgery description as illustrated in~Figure~\ref{figurecover}, in which the meridian $\mu$ is labeled for later reference.  In the diagram, $J^r$ denotes the orientation reverse of $J$, and the meridian of the  surgery curve is oriented consistently with a choice of orientation for that curve. 
(In general, if a link is formed from the Hopf link by tying a local knot $K_1$ in one component, $K_2$ in the second, and then performing $n_1$ and $n_2$ surgery on the link, we denote the resulting manifold $S^3_{n_1,n_2}(K_1,K_2)$.)
If $J$ is reversible,   then $M(K_{J,n})$ has the surgery description $S^3_{-2n,2n}(-2J,2J)$.

From this surgery description,  a quick calculation yields a computation of the homology of $M(K_{J,n})$.  In particular, $H_1(M(K_{J,n}))$ is a cyclic group of order $4n^2 +1$. Notice that given the choice of generator $\mu$ of $H_1(M(K_{J,n}))$, the identification with a cyclic group is canonical.  In particular, this observation along with Poincar\'e duality permits us to identify $H^2(M(K_{J,n}))$ with $H^2(M(K_{U,n}))$ for all $J$.  For emphasis and for later reference we state this as a proposition.

\begin{prop}\label{orderh1} The choice of surgery description of $M(K_{J,n})$ provides a canonical isomorphism $H^2( M(K_{J,n})) \cong H_1( M(K_{U,n})) \cong \zz_{4n^2 +1}$.
\end{prop}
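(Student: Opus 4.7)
My plan is to compute $H_1(M(K_{J,n}))$ directly from the surgery description of Figure~\ref{figurecover} by writing down the linking matrix of the framed link and applying the standard presentation of $H_1$ of a surgered $3$-manifold as the cokernel of that matrix. The critical observation is that tying local knots $J$ and $J^r$ into the two Hopf-link components leaves the linking matrix unchanged: linking numbers between distinct components and the framings depend only on the homology class of each curve in the complement of the others, and inserting a local knot preserves both.

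First I would write down the linking matrix for the Hopf link with framings $-2n$ and $2n$:
\[
A \;=\; \begin{pmatrix} -2n & 1 \\ 1 & 2n \end{pmatrix}.
\]
A single row/column reduction using the off-diagonal $1$'s puts $A$ into Smith normal form $\mathrm{diag}(1,\,4n^2+1)$, so $H_1(M(K_{J,n})) \cong \zz_{4n^2+1}$, with the meridian $\mu$ labelled in the figure representing a generator. This produces a canonical isomorphism $H_1(M(K_{J,n})) \to \zz_{4n^2+1}$ sending $[\mu]$ to $1$.

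Since the surgery description of $M(K_{U,n})$ differs from that of $M(K_{J,n})$ only in that the local knots $J$ and $J^r$ are replaced by the unknot, precisely the same calculation applies with exactly the same distinguished generator $\mu$. Thus the two isomorphisms to $\zz_{4n^2+1}$ agree, yielding a canonical identification $H_1(M(K_{J,n})) \cong H_1(M(K_{U,n}))$. Composing with the Poincar\'e duality isomorphism $H^2(M(K_{J,n})) \cong H_1(M(K_{J,n}))$, valid because $M(K_{J,n})$ is a closed oriented $3$-manifold, gives the canonical isomorphism $H^2(M(K_{J,n})) \cong H_1(M(K_{U,n})) \cong \zz_{4n^2+1}$ asserted by the proposition.

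There is no real obstacle here; the proposition is largely bookkeeping, and its content lies in the word \emph{canonical}, which we realize by tracking the distinguished meridian $\mu$ through each step. The only place requiring mild care is the sign of the off-diagonal entry of $A$, which depends on orientation conventions for the Hopf link components; either choice gives $\det(A) = -(4n^2+1)$ and the same cyclic cokernel, so the conclusion is unaffected.
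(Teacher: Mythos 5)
Your proof is correct and is essentially the argument the paper sketches in the paragraph preceding the proposition: compute $H_1$ from the surgery presentation (whose linking matrix is independent of the local knots $J$, $J^r$), observe the labelled meridian $\mu$ furnishes a canonical generator, and close with Poincar\'e duality. You simply make the "quick calculation" explicit by writing down the linking matrix and its Smith normal form.
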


\begin{figure}[h]
\fig{.5}{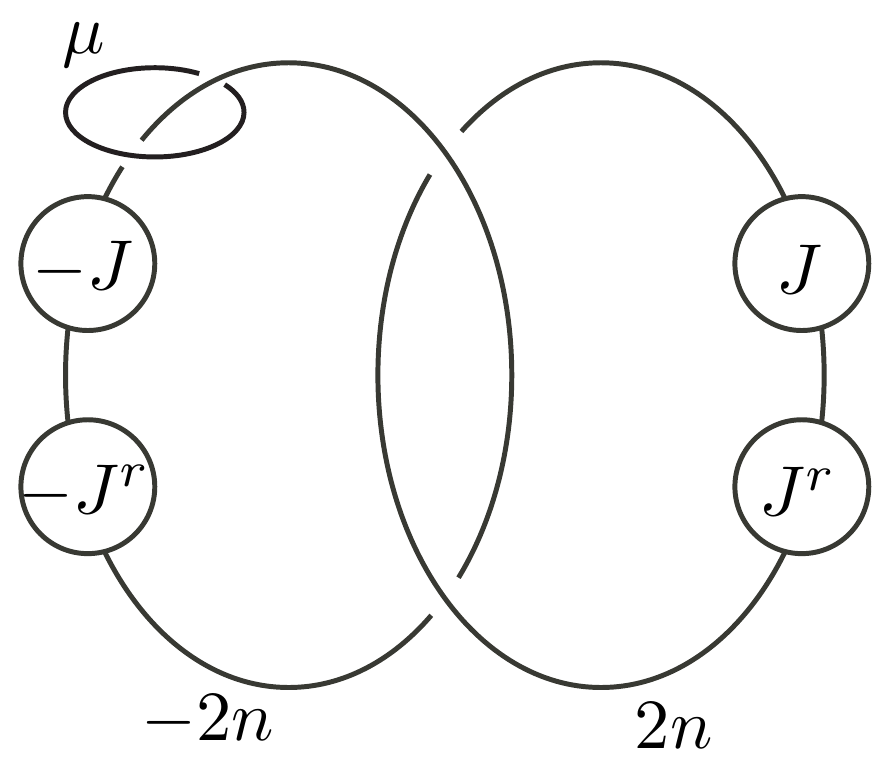}
\caption{}\label{figurecover}
\end{figure}

As a special case, we note that $M(K_{U,n})$ is given by $(4n^2 + 1)/2n$--surgery on the unknot: $M(K_{U,n}) =L(4n^2+1, 2n)$.

If a knot $K$ is slice with slice disk $F^2$, then $M(K)$ bounds the two-fold branched cover of $B^4$ branched over the slice disk, $W(F^2)$.  In this case we have the following from~\cite{cg}.

\begin{prop} The homology groups $H_i(W(F^2 ), \zz_2) = 0 $ for $i \ge 1$.  The image $I$ of the restriction map $H^2(W(F^2)) \to H^2(M(K))$ is a subgroup of order satisfying $|I|^2 = |H_1(M(K))|$. Furthermore, $I$ is self-annihilating with respect to the linking form.  (Via duality, we can view the linking form, usually defined on $H_1(M(K))$, as a form on $H^2(M(K))$.)
\end{prop}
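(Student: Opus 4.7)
The strategy is to exploit the fact that $W := W(F^2)$ is a double cover of $B^4$ branched along a locally flat properly embedded disk $F$, and then to carry out order counting and a linking-form computation via Poincar\'e--Lefschetz duality. The only substantive topological input is the $\zz_2$-acyclicity of $W$; everything else is a diagram chase.

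The complement $B^4 \setminus \nu(F)$ has $\pi_1 \cong \zz$ generated by the meridian of $F$ and is homotopy equivalent to $S^1$. The induced unbranched double cover $W \setminus \nu(\tilde F) \to B^4 \setminus \nu(F)$ corresponds to the unique index-two subgroup, so it too is homotopy equivalent to $S^1$, with the meridian $\tilde\mu$ of $\tilde F$ generating the $\zz_2$-homology in degree one. Reassembling $W$ by attaching $\nu(\tilde F) \simeq {\rm pt}$ along $\tilde\mu$ kills this generator and introduces no new $\zz_2$-homology, so $H_i(W; \zz_2) = 0$ for $i \geq 1$. Universal coefficients then forces $H_i(W; \zz)$ to be finite of odd order in positive degrees, making $W$ a rational homology ball.

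Lefschetz duality combined with universal coefficients identifies $H_3(W, M) \cong H^1(W) = 0$, $H_2(W, M) \cong H^2(W) \cong H_1(W)$, and $H_1(W, M) \cong H^3(W) \cong H_2(W)$. Since $H_0(M) \to H_0(W)$ is an isomorphism, the long exact sequence of the pair reduces to
\[
0 \to H_2(W) \to H_2(W, M) \to H_1(M) \to H_1(W) \to H_1(W, M) \to 0.
\]
Under the identifications $H^2(W) \cong H_2(W, M)$ and $H^2(M) \cong H_1(M)$, the restriction $H^2(W) \to H^2(M)$ is the connecting homomorphism, so its image $I$ satisfies $|I| = |H_1(W)|/|H_2(W)|$ from the first three terms, and multiplicativity of orders across the full sequence yields $|H_1(M)| = |I|^2$.

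For the self-annihilation of $I$ under the linking form, given $\alpha, \beta \in I$ pick integral 2-chains $A, B$ in $W$ with $\partial A = \alpha$, $\partial B = \beta$ (possible since $I = \ker(H_1(M) \to H_1(W))$), and pick $n$ with $n\alpha = \partial c$ for some 2-chain $c \subset M$, so that $\lambda_M(\alpha,\beta) \equiv \tfrac{1}{n}(c \cdot \beta) \pmod 1$. The cycle $nA - c$ is torsion in $H_2(W;\zz)$ and hence pairs trivially with $[B] \in H_2(W, M)$ under the integral intersection pairing $H_2(W) \times H_2(W, M) \to \zz$; expanding $(nA - c) \cdot B = n(A \cdot B) - (c \cdot \beta) = 0$ in $\zz$ gives $\lambda_M(\alpha,\beta) \equiv A \cdot B \equiv 0 \pmod 1$. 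The main obstacle is keeping the duality identifications consistent, but no serious analytic difficulty arises once the $\zz_2$-acyclicity of $W$ is in place.
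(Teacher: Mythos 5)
Your overall architecture (establish $\zz_2$-acyclicity of $W$, then count orders via the long exact sequence of $(W,M)$ and Lefschetz duality, then kill the linking form by bounding chains in $W$) matches the classical Casson--Gordon argument that this proposition is drawn from, and the order-counting and self-annihilation steps are correct as you state them. The gap is at the very first step.

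You assert that $B^4\setminus\nu(F)$ has $\pi_1\cong\zz$ and is homotopy equivalent to $S^1$. This is false in general: a slice disk complement need not be aspherical and its fundamental group is typically far from $\zz$ (it merely surjects onto $\zz$, with abelianization $\zz$; only for the unknotted disk is the complement actually $\simeq S^1$). What is true, and what the argument actually needs, is that $H_*(B^4\setminus\nu(F);\zz)\cong H_*(S^1;\zz)$, which follows from a Mayer--Vietoris decomposition of $B^4$ (or Alexander duality). With only this homological input, the passage to the unbranched double cover $\tilde X = W\setminus\nu(\tilde F)$ is no longer automatic: you cannot say ``it too is homotopy equivalent to $S^1$.'' One needs a genuine argument that the double cover of a $\zz_2$-homology $S^1$ along the mod-$2$ abelianization is again a $\zz_2$-homology $S^1$. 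Casson and Gordon prove this by observing that $C_*(\tilde X;\zz_2)$ is a complex of free modules over the local ring $\zz_2[\zz_2]\cong\zz_2[t]/(t-1)^2$ and running the resulting Milnor sequence; an equivalent route is the transfer (Smith--Gysin) sequence for the free $\zz_2$-cover, where the cap product with the nonzero class $w_1\in H^1(X;\zz_2)$ provides the needed isomorphism $H_1(X;\zz_2)\to H_0(X;\zz_2)$. Either way, this is a substantive lemma, not a consequence of any homotopy equivalence. You should also note that the reassembly Mayer--Vietoris for $W$ requires knowing that $\tilde\mu$ generates $H_1(\tilde X;\zz_2)$, which again follows from the transfer computation rather than from your homotopy-equivalence claim.

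Once $H_i(W;\zz_2)=0$ for $i\geq 1$ is in place, the rest of your proof is sound: the six-term exact sequence, the identifications $H_2(W,M)\cong H^2(W)\cong H_1(W)$ and $H_1(W,M)\cong H^3(W)\cong H_2(W)$, the multiplicativity of orders giving $|I|^2=|H_1(M)|$, and the bounding-chain argument that a torsion class in $H_2(W;\zz)$ pairs trivially against $H_2(W,M;\zz)$, forcing $\lambda_M|_{I\times I}\equiv 0$, are all correct.
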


%%%%SUBSECTION %%%%%%%%
\subsection{Slicing obstructions from Heegaard Floer theory.}\label{spinc}

Heegaard Floer theory associates a  (filtered homotopy class of) chain complex $CF^\infty(M, \spinc) $ to a 3--manifold $M$ with  \Sp\  structure $\spinc$.   For a  manifold $X$, the set of \Sp\ structures,  \Sp$(X)$, is in bijection with elements in $H^2(X)$, though not canonically so.   However, associated to each $\spinc \in \text{\Sp} (X)$,   there is a first Chern class, $c_1(\spinc)\in H^2(X)$, and in the case that $H^2(X,\zz_2) = 0$, the map: $$c_1:  \text{\Sp}(X)\rightarrow H^2(X)$$ provides a bijection that  is natural  with respect to the transitive action of $H^2(X)$ on both sides and with respect to pull-back; that is

\begin{enumerate}
\item $c_1(\spinc+\alpha)=c_1(\spinc)+2\alpha $ for all $\alpha\in H^2(X)$, and
\item $c_1(i^*\spinc)= i^*c_1(\spinc)$ for an embedding  $i:Y\rightarrow X$ with trivial normal bundle. In particular, for the inclusion of a  codimension zero submanifold $Y\subset X$, or for $Y \subset \partial X$, we have $c_1(\spinc|_Y)=c_1(\spinc)|_Y$.
\end{enumerate}
Thus, in cases in which   $H^2(X,\zz_2) = 0$, via the Chern class we can denote \Sp\ structures by $\spinc_\alpha$ for $\alpha\in H^2(X)$.  There is an involution on the set of \Sp\ structures called  conjugation; the conjugate of  $\spinc$ is denoted $\overline{\spinc}$ and  one has $\overline{\spinc_\alpha} =  \spinc_{-\alpha}$. 

As described in greater detail in Section~\ref{sectionhf1},  there is an invariant $d(M, \spinc)$, called the {\it correction term}, defined in terms of the filtered homotopy type of $CF^\infty(M, \spinc) $.  It satisfies the following properties.

\begin{enumerate} \item $d(-M,\spinc) = -d(M, \spinc)$. \vskip.1in
\item $d(M_1 \# M_2, \spinc_1 \# \spinc_2) = d(M_1  , \spinc_1 )+ d(  M_2,  \spinc_2)$.\vskip.1in
\item $d(M, \overline{\spinc} ) = d(M, \spinc)$.
\end{enumerate}

The following theorem from~\cite{os2} provides the obstruction we will use to show that knots are not smoothly slice.  (The use of $d$ as a slicing obstruction first  appeared in~\cite{mo}, where it was applied only for the Spin structure.  In~\cite{grs, jn} it was   used in conjunction with a careful analysis of \Sp\ structures to study concordance.)

\begin{prop}\label{propd} Suppose $(W,\spinct)$ is a \Sp\  four-manifold  satisfying $H_i(W,\qq) = 0, i > 0$, and $M = \partial W$. Then $d(M,  \spinct |_M) = 0$.
\end{prop}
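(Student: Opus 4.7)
\noindent\textbf{Proof plan for Proposition \ref{propd}.}  The plan is to invoke the Ozsv\'ath--Szab\'o inequality bounding $d$--invariants in terms of intersection form data for Spin$^c$ four-manifolds with boundary, and apply it twice: once to $W$ and once to $-W$.  Specifically, I would cite the following from \cite{os2}: if $X$ is a smooth, compact, negative semi-definite Spin$^c$ four-manifold whose boundary is a rational homology sphere $Y$, then for any Spin$^c$ structure $\mathfrak{t}$ on $X$,
\[
c_1(\mathfrak{t})^2 + b_2^-(X) \le 4\, d(Y,\mathfrak{t}|_Y).
\]

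The first step is to extract the structural consequences of the hypothesis $H_i(W,\qq)=0$ for $i>0$.  The long exact sequence of the pair $(W,M)$ together with Poincar\'e--Lefschetz duality shows that $M$ is a rational homology sphere, and that $H^2(W;\qq)=0$, so in particular $b_2^+(W)=b_2^-(W)=0$ and the intersection form on $W$ vanishes rationally.  Consequently $c_1(\spinct)^2 = 0$, and $W$ is (trivially) negative semi-definite.

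The second step is to apply the Ozsv\'ath--Szab\'o inequality to $(W,\spinct)$; the displayed bound collapses to $0 \le 4\, d(M,\spinct|_M)$, so
\[
d(M,\spinct|_M) \ge 0.
\]
The third step is to reverse orientation.  The manifold $-W$ is also a rational homology ball, with boundary $-M$, and carries the Spin$^c$ structure $\spinct$ whose restriction to $-M$ agrees as a set with $\spinct|_M$ on $M$.  The same inequality applied to $(-W,\spinct)$ yields $d(-M,\spinct|_{-M}) \ge 0$, and by property (1) of the correction term, $d(-M,\spinct|_{-M}) = -d(M,\spinct|_M)$.  Combining both inequalities forces $d(M,\spinct|_M)=0$.

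The main obstacle here is, of course, the Ozsv\'ath--Szab\'o inequality itself, which rests on a careful analysis of the cobordism map on $HF^\infty$ induced by puncturing $W$ and on the grading-shift formula for Spin$^c$ cobordisms; this is precisely the content imported from \cite{os2}.  Once that deep input is in hand, everything else is a formal manipulation: verifying $b_2^\pm(W)=0$ and $c_1(\spinct)^2=0$ from the rational homology ball hypothesis, and using the orientation-reversal symmetry of $d$ to obtain the opposite inequality.
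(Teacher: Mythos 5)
Your argument is correct and is exactly the standard derivation that the paper is implicitly invoking when it cites \cite{os2} for this proposition without proof: verify $b_2(W)=0$, $c_1(\spinct)^2=0$, and $\partial W$ a rational homology sphere from the rational-acyclicity hypothesis, apply the Ozsv\'ath--Szab\'o inequality $c_1(\mathfrak{t})^2 + b_2^-(X) \le 4\,d(Y,\mathfrak{t}|_Y)$ to $W$ to get $d(M,\spinct|_M)\ge 0$, and then apply it to $-W$ together with $d(-M,\spinc)=-d(M,\spinc)$ to get the reverse inequality. Since the paper simply states the proposition as a result from \cite{os2}, there is no independent proof in the text to compare against; your reconstruction is the intended one and has no gaps.
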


\vskip.05in 

\noindent{\bf Note.}  In the case that $M^3$ is constructed as $-n$--surgery on an oriented knot $K\subset S^3$, there is the following enumeration of \Sp \ structures on $M$, parameterized by integers $m$ with $-n/2   \le m < n/2$ (see \cite[Section 4]{Knots} for details).  If $W$ denotes the four-ball with a two-handle added along $K$ with framing $-n<0$,   we let $\spinct_m$ denote the \Sp \ structure on $W$ satisfying $\left< c_1(\spinct_m), [S]\right>  + n = 2m$, where $[S]$ is the generator of $H_2(W)$ represented by an oriented Seifert surface for $K$, capped off with the core of the two-handle.  We denote by $\spinc_m$ the restriction of $\spinct_m$ to $M$.  This is well-defined whether $n$ is odd or even.  The Poincare dual of $c_1(\spinc_m)$ satisfies  $\text{PD}(c_1(\spinc_m)) = 2m[\mu]$, where $[\mu] \in H_1(M)$ is the class represented by the meridian of $K$.
 
 %%%%%%%SECTION%%%%%%%%%%%%%%
\section{Main theorem.}\label{sectionexamples}

In Appendix~\ref{appennumber} we use a theorem of Iwaniec to obtain a number theoretic result.  

\begin{prop}\label{defineNprop} There exists an infinite set  $\caln$ of positive integers greater than one such that  for all $n \in \caln$,  $4n^2+1$ is square free  and $4n^2 + 1$ is a product of at most two primes.  Furthermore, for each $m, n \in \caln$, $4m^2 +1 $ and $4n^2+1$ are relatively prime.

\end{prop}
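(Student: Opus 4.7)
My strategy is to apply Iwaniec's theorem on almost-primes represented by irreducible quadratic polynomials, to observe that the squarefree condition is automatic in our setting, and finally to thin the resulting set to arrange pairwise coprimality. The polynomial $f(n) = 4n^2+1 = (2n)^2+1$ is irreducible over $\zz$, has positive leading coefficient, and satisfies $f(0)=1$, so no prime divides every value. Iwaniec's theorem therefore supplies an infinite set $\calm \subset \zz_{\ge 2}$ with $4n^2+1$ a product of at most two primes for every $n \in \calm$. Since $(2n)^2 < 4n^2+1 < (2n+1)^2$ for every $n\ge 1$, the integer $4n^2+1$ lies strictly between two consecutive squares and is therefore never itself a square; in particular it cannot equal $p^2$, so every $P_2$ value must be prime or a product of two distinct primes. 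Consequently $4n^2+1$ is automatically squarefree for every $n\in\calm$.

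To produce $\caln \subseteq \calm$ with pairwise coprime values I would choose $n_1 < n_2 < \dots$ inductively. A useful observation is that any prime factor of $4n^2+1$ is $\equiv 1\pmod 4$, and the \emph{largest} prime factor $q$ of $4n^2+1$ satisfies $q \ge \sqrt{4n^2+1} > 2n$. Hence distinct $n<n'$ in $\calm$ cannot share such a large prime: if $q > 2n'$ were common to both, then $q \mid 4(n'-n)(n'+n)$ would force $q \mid (n'\pm n)$, impossible as $0 < |n'\pm n| < 2n' < q$. Thus the largest prime factors of the elements of $\calm$ are automatically pairwise distinct, and at stage $k$ the only possible obstruction to $\gcd(4n_k^2+1,\prod_{j<k}(4n_j^2+1))=1$ is that the smaller prime factor of $4n_k^2+1$ (if one exists) might coincide with a prime dividing some earlier term. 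This is a congruence restriction on $n_k$ modulo the fixed integer $P_k:=\prod_{j<k}(4n_j^2+1)$ with a positive proportion of permitted residues.

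The main obstacle is thus to produce infinitely many $n\in\calm$ in those permitted residue classes. I would handle this by appealing to the extension of Iwaniec's theorem to arithmetic progressions: the level of distribution of the half-dimensional sieve comfortably absorbs any fixed modulus, so the count of $n\le X$ with $4n^2+1\in P_2$ lying in a compatible residue class modulo $P_k$ is still $\gg X/\log X$. This supplies a suitable $n_k > n_{k-1}$ at every stage and completes the construction of $\caln$. Should that level-of-distribution input prove inconvenient, an alternative is a dichotomy: either infinitely many $n\in\calm$ have $4n^2+1$ prime (in which case coprimality is immediate), or a finite list of small primes persistently appears as the smaller prime factor, in which case one iterates Iwaniec on $(4n^2+1)/p$ restricted to the relevant residue class.
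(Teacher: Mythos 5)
Your first two steps match the paper exactly: apply Iwaniec's theorem to an irreducible quadratic of the right shape to get infinitely many $P_2$-values, and observe that $4n^2+1$ lies strictly between consecutive squares, so a value that is a product of at most two primes is automatically squarefree. The divergence is in the coprimality step, and there you have a real gap.

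Your observation that the large prime factors are automatically pairwise distinct is correct but does not resolve the problem, since (as you note) the \emph{smaller} prime factor can recur. To rule that out you propose finding, at stage $k$, some $n_k\in\calm$ lying in a permitted residue class modulo $P_k=\prod_{j<k}(4n_j^2+1)$, and you appeal to ``the extension of Iwaniec's theorem to arithmetic progressions'' for the existence of such $n_k$. That extension is not stated in Iwaniec's cited paper, is not quantified in your write-up, and your fallback dichotomy is not a genuine dichotomy: it is perfectly possible that $4n^2+1$ is prime for only finitely many $n\in\calm$ while the smaller prime factor still ranges over infinitely many primes, in which case neither branch applies. So as written the argument does not close.

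The fix is available with the \emph{original} form of Iwaniec's theorem, and this is what the paper does. Rather than looking for $n_k\in\calm$ in a good residue class, apply Iwaniec afresh at each stage to a different polynomial: with $A=\prod_{j<k}(4n_j^2+1)$, the polynomial $G(N)=4A^2N^2+1$ is irreducible with positive leading coefficient and odd constant term, so Iwaniec produces infinitely many $N$ with $G(N)\in P_2$. Setting $n_k=AN$, one has $4n_k^2+1=4A^2N^2+1\equiv 1\pmod{A}$, so coprimality to every earlier $4n_j^2+1$ is automatic; there is no need to analyze residue classes or to separate the large and small prime factors. (Equivalently, your residue-class idea can be made rigorous by applying the original Iwaniec theorem to $G(m)=4(P_k m+r)^2+1$ for a fixed permitted $r$, and the paper's choice amounts to taking $r=0$, which is trivially permitted since $4\cdot 0^2+1=1$.) You were one re-parametrization away from a complete proof; as it stands the appeal to a stronger sieve result than is cited leaves a hole.
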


 The main results of this  paper are consequences of the following theorem.

\begin{theorem}\label{main1} For each $n\in \caln$   there is a positive integer $k_n$ having  the following property:  If  $n \in \caln$ and    $L$ is any knot with $|H_1(M(L))| $ relatively prime to $4n^2 +1$, then $K_{D_{k_n}, n} \cs K_{U,n} \cs L$ is not slice.

\end{theorem}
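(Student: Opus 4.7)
We argue by contradiction. Suppose that for some $n\in\caln$ and some $L$ with $|H_1(M(L))|$ coprime to $4n^2+1$, the knot $K:=K_{D_{k_n},n}\cs K_{U,n}\cs L$ admits a smooth slice disk $F$. Its double branched cover $M(K)=M(K_{D_{k_n},n})\cs M(K_{U,n})\cs M(L)$ then bounds the smooth $\zz_2$--homology ball $W(F)$. By the Casson--Gordon proposition recalled in Section~\ref{prelimsec}, there is a self--annihilating subgroup $I\subset H^2(M(K))$ of order $\sqrt{|H_1(M(K))|}$, and by Proposition~\ref{propd} the correction term $d(M(K),\mathfrak{s})$ vanishes for every Spin$^c$ structure $\mathfrak{s}$ on $M(K)$ that extends across $W(F)$; these extending structures form a coset $\mathfrak{s}_0+I$ in the affine space of Spin$^c$ structures on $M(K)$.

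The coprimality assumption lets us isolate the contribution of $L$. Since $|H_1(M(L))|$ is coprime to $4n^2+1$, the linking form on $H^2(M(K))$ decomposes as an orthogonal direct sum along the three connect--summands, and any self--annihilating subgroup decomposes similarly as $I=I'\oplus I_L$ with $I'$ a metabolizer of order $4n^2+1$ in $H^2(M(K_{D_{k_n},n}))\oplus H^2(M(K_{U,n}))\cong \zz_{4n^2+1}\oplus \zz_{4n^2+1}$. Using additivity of $d$ under connected sum and setting the $L$--component of $\alpha\in I$ to zero, the vanishing of $d$ on $\mathfrak{s}_0+I$ forces the function
\begin{equation*}
(\alpha_1,\alpha_2)\mapsto d(M(K_{D_{k_n},n}),\mathfrak{s}_0^{(1)}+\alpha_1)+d(M(K_{U,n}),\mathfrak{s}_0^{(2)}+\alpha_2)
\end{equation*}
to be constant on $I'$, with value $-d(M(L),\mathfrak{s}_0^{(L)})$. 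Thus it suffices, independently of $L$, to choose $k_n$ so that for every metabolizer $I'$ the above sum fails to be constant on every $I'$--coset.

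Proposition~\ref{defineNprop} reduces this to a finite check. Since $4n^2+1$ is square--free with at most two prime factors, the metabolizers of the linking form on $\zz_{4n^2+1}\oplus\zz_{4n^2+1}$ are built prime--by--prime from the two ``graph'' subgroups available in each $\zz_p\oplus\zz_p$ factor, giving a short explicit list of metabolizers. For each such $I'$ we compare $d(M(K_{D_k,n}),\mathfrak{s}_m)$ with the explicit value $d(M(K_{U,n}),\mathfrak{s}_m)=d(L(4n^2+1,2n),\mathfrak{s}_m)$, which is computable via the standard lens--space recursion. Because $D_k$ is topologically slice, $M(K_{D_k,n})$ is topologically $\zz_2$--homology cobordant to the lens space, so the two $d$--invariant functions have the same overall shape but differ by smooth correction terms governed by Heegaard Floer invariants of the Whitehead double $D_k$. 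These corrections vary with the Spin$^c$ label $m$ and grow in magnitude with $k$, so we will choose $k_n$ large enough that the sum $d_1+d_2$ cannot be constant on any metabolizer coset.

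The main obstacle is precisely this final Heegaard Floer computation and the combinatorial check it entails: we need $d(M(K_{D_k,n}),\mathfrak{s}_m)$ precisely enough (as a function of $m$ and $k$) to verify, for each of the finitely many metabolizer patterns permitted by Proposition~\ref{defineNprop}, that the restricted sum $(d_1+d_2)(\mathfrak{s}_0^{(1)}+\alpha_1,\mathfrak{s}_0^{(2)}+\alpha_2)$ is genuinely non--constant on every $I'$--coset once $k_n$ is sufficiently large. Sections~\ref{sectionhf1},~\ref{sectionhf2}, and~\ref{computesection} are devoted to carrying out this analysis.
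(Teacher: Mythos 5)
Your proposal is correct and follows essentially the same route as the paper: reduce sliceness to the vanishing of $d$ on a metabolizer coset, split off the $L$--summand via the coprimality hypothesis, identify the small list of metabolizers of $\zz_{4n^2+1}\oplus\zz_{4n^2+1}$ (this is the paper's Lemma~\ref{theoremmet2}, proved via Gauss--Jordan together with square-freeness, rather than your prime-by-prime description, but the conclusion is the same short list), and then invoke the Heegaard Floer computation to rule out each metabolizer for $k_n$ large. One small economy in your version: you only argue that the restricted sum $(\alpha_1,\alpha_2)\mapsto d_1+d_2$ fails to be constant on each candidate $I'$, whereas the paper sharpens this to a literal vanishing (Lemma~\ref{lemma-a}, using that $L$ is of order two) and then applies Lemma~\ref{lemma-b} to convert the relation $d_1(\spinc_x)+d_2(\spinc_{bx})=0$ into $d_1(\spinc_x)=d_2(\spinc_{bx})$; your weaker ``non-constancy'' formulation is in fact equivalent here, since $(0,0)$ lies in every metabolizer and $d_i(\spinc_0)=0$ by the negative amphicheirality of $K_{J,n}$, so a constant sum must equal zero anyway. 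The one thing you should make explicit is that the extending coset $\mathfrak{s}_0+I$ contains the spin structure (it is conjugation-invariant of odd order), so the basepoints $\mathfrak{s}_0^{(1)},\mathfrak{s}_0^{(2)}$ can be taken to be the canonical spin structures, matching what Proposition~\ref{prop:dmn} actually computes; and of course the whole argument ultimately rests on the formula $d(M(K_{D_k,n}),\spinc_n)-d(M(K_{U,n}),\spinc_n)=-2k$ from Section~\ref{computesection}, which your outline gestures at but correctly defers.
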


Most important, as an immediate corollary we have the result that implies Theorems~1,~2, and~3 of the introduction.

\begin{corollary}\label{maincor}  For all nonempty finite subsets $\caln' \subset \caln$, $\sum_{n \in \caln'} (K_{D_{k_n},n}\cs K_{U,n}) \notin \calc_\Delta$.  
In particular, the set of knots $\{ K_{D_{k_n}, n} \cs K_{U,n} \}$ generate a subgroup isomorphic to $\zz_2^\infty$ in $\calc_{ts}/\calc_{\Delta}$.
\end{corollary}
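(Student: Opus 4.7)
The plan is to derive the corollary directly from Theorem~\ref{main1}. Write $K_n := K_{D_{k_n}, n}\cs K_{U,n}$ and fix a nonempty finite $\caln' \subset \caln$. Suppose for contradiction that $\sum_{n\in\caln'} K_n \in \calc_\Delta$. Since $\calc_\Delta$ is generated by Alexander polynomial one knots, and $\Delta$ is multiplicative under $\cs$ and unchanged (up to units) under concordance inversion, there is a knot $J$ with $\Delta_J \equiv 1$ for which $(\cs_{n\in\caln'} K_n) \cs (-J)$ is smoothly slice. Fixing $n_0\in\caln'$ and setting $L := \bigl(\cs_{n\in\caln'\setminus\{n_0\}} K_n\bigr) \cs (-J)$, the preceding statement becomes: $K_{n_0} \cs L$ is slice.

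The next step is to verify that $L$ satisfies the hypothesis of Theorem~\ref{main1} for $n_0$. Because the branched double cover of a connected sum is the connected sum of the covers, $|H_1(M(L))|$ equals the product of the orders of $H_1$ of the individual covers. Proposition~\ref{orderh1} gives $|H_1(M(K_n))| = (4n^2+1)^2$ for each $n$ (as $K_n$ is itself a connected sum of two knots with $|H_1(M)|=4n^2+1$), while $\Delta_J \equiv 1$ forces $|H_1(M(J))| = |\Delta_J(-1)| = 1$. Hence $|H_1(M(L))| = \prod_{n\in\caln'\setminus\{n_0\}} (4n^2+1)^2$, which by Proposition~\ref{defineNprop} is coprime to $4n_0^2+1$. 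Theorem~\ref{main1} then asserts that $K_{n_0} \cs L$ is not slice---a contradiction. This proves $\sum_{n\in\caln'} K_n \notin \calc_\Delta$ for every nonempty finite $\caln'\subset\caln$.

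To deduce the $\zz_2^\infty$ statement, note that each $K_n$ lies in $\calc_{TS}$ (since $D_{k_n}$ is topologically slice by Freedman, and hence so is $K_n$) and satisfies $2K_n = 0$ in $\calc$ by the proposition following Figure~\ref{figureknot}. The homomorphism $\bigoplus_{n\in\caln}\zz_2 \to \calc_{TS}/\calc_\Delta$ sending the $n$-th basis vector to $[K_n]$ is therefore well-defined, and the nonvanishing established above shows that it is injective. The real content of the argument lies in Theorem~\ref{main1}; granting that, the corollary reduces to a bookkeeping step that uses amphichirality to isolate $K_{n_0}$ on one side of the equation and invokes the coprimality built into $\caln$.
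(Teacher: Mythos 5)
Your argument is correct and follows the same route as the paper: assume the sum lies in $\calc_\Delta$, isolate one summand $K_{n_0}$, absorb the rest together with a trivial–Alexander–polynomial knot into $L$, and invoke Theorem~\ref{main1}. The one place you go beyond the paper's write-up is in explicitly verifying the hypothesis of Theorem~\ref{main1}, namely that $|H_1(M(L))|$ is coprime to $4n_0^2+1$ (using multiplicativity of branched double covers under connected sum, $|H_1(M(K_n))|=(4n^2+1)^2$, $|H_1(M(J))|=|\Delta_J(-1)|=1$, and the pairwise coprimality built into $\caln$); the paper leaves this routine check implicit. Your closing paragraph on $\zz_2^\infty$ is also a reasonable spelling-out of the ``in particular'' that the paper takes as immediate.
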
 

\begin{proof}[{\bf Proof} Corollary~\ref{maincor}]
Suppose that $\sum_{n \in \caln'} (K_{D_{k_n},n}\cs K_{U,n}) $ is concordant to a knot $K$ with Alexander polynomial one.   Then we have $\sum_{n \in \caln'} (K_{D_{k_n},n}\cs K_{U,n}) \cs -K$ is slice.   Let   $m$ be the least $n \in \caln'$ and let $\caln''$ be the set $\caln'$ with $m$ removed.  We can break up the connected sum of knots as
$$(K_{D_{k_{m},m}}\cs K_{U,m} ) \cs (  \sum_{n \in \caln''} (K_{D_{k_n},n}\cs K_{U,n}) \cs -K).$$  At this point we can complete the proof by  applying Theorem~\ref{main1} with $L =  \sum_{n \in \caln''} (K_{D_{k_n},n}\cs K_{U,n}) \cs -K$.
\end{proof}

\subsection{Proof of Theorem~\ref{main1}}
The rest of this section presents the proof of Theorem~\ref{main1}, calling upon results from later sections as needed. The choice of $k_n$ will be described in the context of the proof.

Abbreviate  $K_{D_{k}, n} \cs K_{U,n} $ by $K_{n,k}$.  Assuming that $K_{n,k} \cs L$ is slice, the manifold $M(K_{n,k}) \cs M(L)$ bounds a rational homology ball $W$.  Since the orders of $H_1(M(K_{n,k}))$ and $H_1(M(L))$ are relatively prime, it follows that the image of $H^2(W)$ in $H^2(M(K_{n,k})\cs M(L)) \cong ( \zz_{4n^2+1} \oplus \zz_{4n^2+1}) \oplus H^2(M(L))$ contains a subgroup of the form   $\calm \oplus 0$ where $\calm \subset   \zz_{4n^2+1} \oplus \zz_{4n^2+1}$    is a metabolizer for the linking form on $H_1(M(K_{n,k}))$. With this we can prove the following.

\begin{lemma}\label{lemma-a} If $K_{n,k} \cs L$ is slice, then for some metabolizer $\calm$ of the linking form on $H_1(M(K_{n,k}))$ and for all $(z_1, z_2) \in \calm$, $d(M(K_{D_{k},n}), \spinc_{z_1}) + d( M(K_{U,n}),\spinc_{z_2}) = 0$. 

\end{lemma}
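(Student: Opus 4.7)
The plan is to apply Proposition~\ref{propd} to the double branched cover $W \to B^4$ over a slice disk for $K_{n,k}\cs L$. Its boundary is $M(K_{D_k,n}) \cs M(K_{U,n}) \cs M(L)$, and the Casson--Gordon proposition from Section~\ref{prelimsec} guarantees that $W$ is a rational homology ball.

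First I would split the Casson--Gordon metabolizer $I\subset H^2(\partial W)$. Since $|H_1(M(K_{n,k}))|=(4n^2+1)^2$ and $|H_1(M(L))|$ are coprime, $H^2(\partial W)$ decomposes as a direct sum of coprime-order groups on which the linking form is block diagonal. Therefore $I=\calm\oplus\calm'$ with $\calm\subset H^2(M(K_{n,k}))$ and $\calm'\subset H^2(M(L))$, and each summand is self-annihilating; a count of orders shows $|\calm|^2=|H_1(M(K_{n,k}))|$, so $\calm$ is a metabolizer for the linking form on $H_1(M(K_{n,k}))$.

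Next, I would lift $(z_1,z_2,0)\in\calm\oplus\calm'\subset I$ to a spin$^c$ structure on $W$. Because $H^*(W;\zz_2)=0$, we have $w_2(W)=0$ and there exists a spin$^c$ structure $\spinct_0$ on $W$ with $c_1(\spinct_0)=0$, restricting to the unique spin structure $\spinc_0$ on each boundary component. For $\alpha\in H^2(W)$ the translate $\spinct_0+\alpha$ has Chern class $2\alpha$, and since $H^2(\partial W)$ has odd order, multiplication by $2$ is a bijection on $I$; hence some $\alpha$ yields $c_1(\spinct_0+\alpha)|_{\partial W}=(z_1,z_2,0)$. Applying Proposition~\ref{propd} together with additivity of $d$ under connected sum then gives, for every $(z_1,z_2)\in\calm$,
\begin{equation}\label{eqdsum}
d(M(K_{D_k,n}),\spinc_{z_1}) + d(M(K_{U,n}),\spinc_{z_2}) + d(M(L),\spinc_0) = 0.
\end{equation}

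The main obstacle, where my earlier plan fell short, is eliminating the $d(M(L),\spinc_0)$ term, since $L$ is arbitrary. Here I would invoke negative amphicheirality $K_{J,n}=-K_{J,n}$ from the preceding proposition: this produces an orientation-reversing self-diffeomorphism of $M(K_{J,n})$. Because $4n^2+1$ is odd, the spin structure $\spinc_0$ on $M(K_{J,n})$ is unique and therefore preserved by this diffeomorphism, so combining diffeomorphism invariance of $d$ with $d(-M,\spinc)=-d(M,\spinc)$ forces $d(M(K_{D_k,n}),\spinc_0)=d(M(K_{U,n}),\spinc_0)=0$. Evaluating~\eqref{eqdsum} at $(z_1,z_2)=(0,0)\in\calm$ then yields $d(M(L),\spinc_0)=0$, and substituting this back into~\eqref{eqdsum} delivers the claimed identity.
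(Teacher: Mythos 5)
Your proof is correct, and its first two stages---passing to the double branched cover $W$ of $B^4$, splitting the Casson--Gordon subgroup $I$ using the coprimality of $|H_1(M(K_{n,k}))|$ and $|H_1(M(L))|$, and lifting elements of $\calm\oplus 0$ to \Sp\ structures on $W$ via the Spin structure---are exactly what the paper relies on to obtain the identity $d(M(K_{D_{k},n}),\spinc_{z_1})+d(M(K_{U,n}),\spinc_{z_2})+d(M(L),\spinc_0)=0$ (the paper compresses all of this into the word ``immediate,'' leaning on the discussion of $\calm\oplus 0$ just before the lemma). Where you genuinely diverge is in eliminating $d(M(L),\spinc_0)$. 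The paper works directly with $L$: since $L$ is concordant to $-K_{n,k}$, which has order two, $2L$ is slice, so $2M(L)$ bounds a $\zz_2$--homology ball $Z$; the Spin structure on $Z$ restricts to that of $2M(L)$, and Proposition~\ref{propd} gives $2d(M(L),\spinc_0)=0$. You instead use the negative amphicheirality of $K_{J,n}$ to conclude $d(M(K_{D_{k},n}),\spinc_0)=d(M(K_{U,n}),\spinc_0)=0$ (the unique Spin structure is preserved by the orientation-reversing self-diffeomorphism, and $d(-M,\spinc)=-d(M,\spinc)$), and then evaluate your identity at $(0,0)\in\calm$. Both routes are valid and both ultimately exploit two-torsion, but in different places: yours avoids constructing a second four-manifold and invoking Proposition~\ref{propd} a second time, at the cost of using the specific symmetry of the knots $K_{J,n}$; the paper's argument is insensitive to that symmetry and would apply verbatim to any order-two summand $L$.
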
 

\begin{proof}  It is immediate that $d(M(K_{D_{k},n}), \spinc_{z_1}) + d( M(K_{U,n}),\spinc_{z_2}) + d(M(L),\spinc_0) = 0$.  Notice that since $L$ is assumed to be concordant to $-K_{n,k}$, which is of order two, $L$ is also of order 2.  Because $2L$ is slice, $2M(L)$ bounds a $\zz_2$--homology ball $Z$.  The Spin structure on  $Z$  restricts to the Spin structure on $2M(L)$.  Thus, the \Sp\ structure $\spinc_0 \oplus \spinc_0$ on $M(L) \cs M(L)$ extends to $Z$.  It follows that   $2d(M(L),\spinc_0) = 0$.  

\end{proof}

We now must consider  metabolizers for the linking form on  $(\zz_{4n^2+1})^2 $.

\begin{lemma}\label{theoremmet2} For a fixed non-degenerate linking   form on $ \zz_{N}  $, with $N$ square-free, each metabolizer for the double of this form   on $(\zz_{N})^2 $ is generated by an element $(1,b)$ where $1+b^2 \equiv 0 \mod N$.
\end{lemma}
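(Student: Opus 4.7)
The plan is to reduce to the prime case via the Chinese Remainder Theorem and then analyze each prime separately. Since $N$ is square-free, write $N = p_1 \cdots p_k$ for distinct primes. The splitting $\zz_N \cong \bigoplus_i \zz_{p_i}$ induces an orthogonal decomposition $\lambda = \bigoplus_i \lambda_i$ of the nondegenerate linking form: for $x \in \zz_{p_i}$ and $y \in \zz_{p_j}$ with $i \ne j$, the value $\lambda(x,y) \in \qq/\zz$ is killed by both $p_i$ and $p_j$ and hence vanishes. Each $\lambda_i$ is a nondegenerate form on $\zz_{p_i}$, necessarily of the shape $\lambda_i(x,y) = r_i xy/p_i \bmod \zz$ with $r_i$ a unit modulo $p_i$. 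Correspondingly $(\zz_N)^2 \cong \bigoplus_i (\zz_{p_i})^2$, and any metabolizer $\calm$ splits as $\calm = \bigoplus_i \calm_i$ with $\calm_i = \calm \cap (\zz_{p_i})^2$; counting $|\calm| = N$ while each $|\calm_i| \in \{1, p_i, p_i^2\}$ forces $|\calm_i| = p_i$ for every $i$.

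Next, for a single prime $p_i$, the self-annihilation of $\calm_i$ under the doubled form $\lambda_i \oplus \lambda_i$ does all the work. Since $\calm_i$ has prime order it is cyclic, generated by some $(a_i, b_i) \ne (0,0)$, and self-annihilation yields $r_i(a_i^2 + b_i^2)/p_i \equiv 0$ in $\qq/\zz$, i.e., $a_i^2 + b_i^2 \equiv 0 \pmod{p_i}$. The key observation is that $a_i$ must be a unit: if $a_i = 0$ the congruence forces $b_i \equiv 0$, contradicting nontriviality of the generator. Rescaling by $a_i^{-1}$ produces a new generator $(1, c_i)$ of $\calm_i$ with $1 + c_i^2 \equiv 0 \pmod{p_i}$.

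Finally I would reassemble via CRT: choose $b \in \zz_N$ with $b \equiv c_i \pmod{p_i}$ for each $i$. The element $(1,b) \in (\zz_N)^2$ reduces modulo each $p_i$ to a generator of $\calm_i$ and therefore generates $\calm$, while the simultaneous congruences combine to $1 + b^2 \equiv 0 \pmod{N}$. The main obstacle I anticipate is the careful bookkeeping needed to justify the orthogonal splitting of $\lambda$ and the matching Sylow-type splitting of $\calm$; both depend on the square-freeness of $N$, without which distinct prime components could fail to be orthogonal or independent. Once these splittings are in hand, the argument reduces to a direct computation in each $\zz_{p_i}$.
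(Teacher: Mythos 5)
Your proof is correct, and it takes a genuinely different route from the paper's. You decompose via the Chinese Remainder Theorem into prime pieces, verify the orthogonal splitting of the linking form (using that $p_i\lambda(x,y)=p_j\lambda(x,y)=0$ and $\gcd(p_i,p_j)=1$), match the Sylow decomposition of $\calm$ to this splitting, solve the problem in each $\zz_{p_i}$ where everything is a field, and reassemble. The paper instead works directly in $\zz_N$: it observes that any subgroup of $(\zz_N)^2$ is generated by two elements, applies a Gauss--Jordan reduction to obtain generators $\{(a,b),(0,c)\}$, rules out $c\ne 0$ because a nonzero element of $\zz_N$ cannot self-annihilate when $N$ is square-free and the form is non-degenerate, and then uses the same square-free observation to show that the single generator $(a,b)$ must have $a$ relatively prime to $N$, after which rescaling gives $(1,b')$. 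Both arguments invoke square-freeness at the same essential moment (to rule out isotropic elements on one coordinate axis), but your version reaches that point only after the CRT reduction, where it becomes the trivial fact that $\zz_{p}$ has no nonzero square root of $0$. Your approach buys conceptual transparency and makes the role of square-freeness especially clear; the paper's approach is shorter and avoids the bookkeeping of the orthogonal and Sylow splittings. One small polish: your step ``$|\calm_i|\in\{1,p_i,p_i^2\}$ forces $|\calm_i|=p_i$'' deserves the one-line justification that $\prod p_i^{e_i}=\prod p_i$ with $e_i\in\{0,1,2\}$ forces all $e_i=1$ by unique factorization, and that $\calm_i$ is indeed self-annihilating for $\lambda_i\oplus\lambda_i$ because the decomposition is orthogonal and $\calm_i\subset\calm=\calm^\perp$.
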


\begin{proof} 
Recall first that a non-degenerate linking form on $\zz_N$ is given by an element $\alpha \in \zz_N$:  $lk(x,y) \equiv x\alpha y \mod N$, where $\alpha $ and $N$ are relativity prime. 

Since  $(\zz_{N})^2 $ is of rank two, any metabolizer $\calm$ is of rank at most two, so is generated by two elements, $\{(a,b), (c,d)\}$.  Using Gauss-Jordan elimination, we see it is generated by a pair of elements $\{(a,b), (0,c)\}$.  If $c$ is nonzero  it would have self-linking 0, which is impossible for a non-degenerate form on $\zz_N$ with $N$ square-free.  

Thus $\calm$ is generated by a single element $(a,b)$, so $(a,b)$ is of order $N$.   If either $a$ or $b$ were divisible by some prime factor of $N$, then some multiple of $(a,b)$ would be of the form $(0,c)$ or $(c,0)$ with $c$ nonzero.  But again, the existence of such an element is ruled out by $N$ being square-free and the form being non-degenerate.  Since $a$   must be  relatively prime to $N$, some multiple of $(a,b)$ is of the form $(1,b')$, and clearly $b' \ne 0$.  In fact, since $(1, b')$ is in the metabolizer $\calm$, one has $1 + (b')^2 \equiv 0 \mod N$, as desired.  \end{proof}

Combining Lemmas~\ref{lemma-a} and \ref{theoremmet2} yields the following.

\begin{lemma} If $K_{n,k} \cs L$ is slice, then  for some $b$ satisfying $1 + b^2 \equiv 0 \mod 4n^2+1$ and for all $x$,  $d(M(K_{D_{k},n}), \spinc_{x}) + d( M(K_{U,n}),\spinc_{bx}) = 0$.
\end{lemma}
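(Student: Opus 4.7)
The plan is to combine the two preceding lemmas directly, using the square-freeness of $4n^2+1$ as the bridge. The statement is essentially a packaging result: Lemma~\ref{lemma-a} provides a metabolizer $\calm \subset H_1(M(K_{n,k})) \cong (\zz_{4n^2+1})^2$ on which the sum of $d$--invariants vanishes, and Lemma~\ref{theoremmet2} pins down the form of $\calm$. So the proof should be short, with the main ``content'' simply being the bookkeeping of how elements of $\calm$ look as pairs $(x, bx)$.

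First, assume $K_{n,k} \cs L$ is slice. I would invoke Lemma~\ref{lemma-a} to obtain a metabolizer $\calm$ for the linking form on $H_1(M(K_{n,k})) \cong (\zz_{4n^2+1})^2$ with the property that
\[
d(M(K_{D_k,n}), \spinc_{z_1}) + d(M(K_{U,n}), \spinc_{z_2}) = 0
\]
for every $(z_1,z_2) \in \calm$.

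Next, I would invoke Proposition~\ref{defineNprop} to note that $N := 4n^2+1$ is square-free, so that the hypothesis of Lemma~\ref{theoremmet2} applies. (The non-degenerate linking form on $\zz_N$ in question is the one carried by $H_1(M(K_{U,n})) \cong \zz_N$, and the form on $(\zz_N)^2$ arising from $H_1(M(K_{n,k}))$ is indeed the double of this form, since $M(K_{n,k}) = M(K_{D_k,n}) \cs M(K_{U,n})$ and Proposition~\ref{orderh1} identifies both summands canonically.) Lemma~\ref{theoremmet2} then tells us that $\calm$ is generated by a single element $(1,b)$ with $1 + b^2 \equiv 0 \pmod{4n^2+1}$, so that
\[
\calm = \{(x, bx) : x \in \zz_{4n^2+1}\}.
\]

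Substituting $(z_1, z_2) = (x, bx)$ into the vanishing relation furnished by Lemma~\ref{lemma-a} gives the stated conclusion, for all $x \in \zz_{4n^2+1}$. The only subtle step is verifying that the linking form on $H_1(M(K_{n,k}))$ really is the double of the form on $\zz_N = H_1(M(K_{U,n}))$, so that Lemma~\ref{theoremmet2} applies verbatim; this is the one point worth making explicit, but it follows from the canonical identification of Proposition~\ref{orderh1} together with the fact that linking forms are additive under connected sum. There is no serious obstacle here — the hard work lies upstream (in Lemma~\ref{lemma-a} and the number-theoretic Proposition~\ref{defineNprop}) and downstream (in actually computing the $d$--invariants to derive a contradiction).
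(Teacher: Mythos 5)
Your proposal is correct and matches the paper's (implicit) argument exactly: the paper simply states that the lemma follows by ``combining Lemmas~\ref{lemma-a} and \ref{theoremmet2},'' which is precisely the bookkeeping you carry out, with the square-freeness from Proposition~\ref{defineNprop} as the bridge. Your extra remark that the linking form on $H_1(M(K_{n,k}))$ is the double of the form on $\zz_{4n^2+1}$ (via Proposition~\ref{orderh1} and additivity under connected sum) is a point the paper leaves implicit and is worth making.
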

Notice that in this statement the subscripts on  the \Sp\   structures, $x$ and $bx$, are cohomology classes;  the cohomology of the spaces are identified using Proposition~\ref{orderh1}.

For our purposes, a change of signs will be convenient, as follows.

\begin{lemma}\label{lemma-b} If $K_{n,k} \cs L$ is slice, then  there is some $b$ satisfying $ b^2  \equiv 1 \mod  4n^2+1$ such that    for all $x$,  $d(M(K_{D_{k},n}), \spinc_{x}) = d( M(K_{U,n}),\spinc_{bx}) $.

\end{lemma}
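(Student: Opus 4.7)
The plan is to extract an additional symmetry relation among the $d$-invariants of $M(K_{U,n})$ by running the slicing-obstruction machinery a second time, this time on the already slice knot $2K_{U,n}$. Combining this new symmetry with the identity from the preceding lemma will both turn the ``$+$'' into an ``$=$'' and toggle the sign condition from $b^2\equiv -1$ to $b^2\equiv 1$.

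Concretely, I would first invoke the Proposition on amphicheirality with $J=U$ to conclude that $2K_{U,n}=0\in\calc$, so a smooth slice disk $F$ for $2K_{U,n}$ exists. Its double branched cover $W(F)$, which is bounded by $M(K_{U,n})\cs M(K_{U,n})$, is a rational homology $4$-ball: this follows from the Casson--Gordon-style proposition ($H_*(W(F),\zz_2)=0$) combined with $\chi(W(F))=1$ and the universal coefficient theorem, which together force all higher $\zz$-Betti numbers to vanish and leave only odd torsion in $H_1$. Running the argument of Lemma~\ref{lemma-a} on $W(F)$ and applying Proposition~\ref{propd}, the image $I$ of $H^2(W(F))\to H^2(2M(K_{U,n}))=\zz_{4n^2+1}\oplus\zz_{4n^2+1}$ is a metabolizer of the doubled linking form, and $d(M(K_{U,n}),\spinc_{z_1})+d(M(K_{U,n}),\spinc_{z_2})=0$ for every $(z_1,z_2)\in I$.

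Since $4n^2+1$ is square-free by Proposition~\ref{defineNprop}, Lemma~\ref{theoremmet2} identifies $I$ as the cyclic subgroup generated by $(1,c)$ with $c^2\equiv -1\pmod{4n^2+1}$. This yields the symmetry
$$d(M(K_{U,n}),\spinc_y)=-d(M(K_{U,n}),\spinc_{cy})\quad\text{for every }y\in\zz_{4n^2+1}.$$
Applying this identity with $y=bx$ to the equation supplied by the preceding lemma gives
$$d(M(K_{D_{k},n}),\spinc_x)=-d(M(K_{U,n}),\spinc_{bx})=d(M(K_{U,n}),\spinc_{cbx}),$$
and setting $b'=cb$, the new coefficient satisfies $(b')^2\equiv c^2 b^2\equiv (-1)(-1)\equiv 1\pmod{4n^2+1}$. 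Relabelling $b'$ as $b$ delivers the stated conclusion.

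The only point requiring any care is that the two invocations of the metabolizer apparatus — one for $K_{n,k}\cs L$, one for $2K_{U,n}$ — use the same canonical identification $H^2(M(K_{U,n}))\cong\zz_{4n^2+1}$; this is exactly what Proposition~\ref{orderh1} provides uniformly from the surgery description, so the identifications agree automatically. I do not anticipate any real obstacle beyond this bookkeeping.
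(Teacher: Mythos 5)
Your proposal is correct and is essentially the paper's own argument: run the metabolizer/$d$-invariant obstruction a second time on the slice knot $K_{U,n}\cs K_{U,n}$ to get a relation $d(M(K_{U,n}),\spinc_y)=-d(M(K_{U,n}),\spinc_{cy})$ with $c^2\equiv -1$, substitute $y=bx$, and compose with the preceding lemma; the paper compresses this to ``the previous argument shows there is some $b'$ \dots Replacing $x$ with $bx$ \dots The rest is arithmetic,'' while you spell out the construction of the rational homology ball $W(F)$ and the sign bookkeeping explicitly.
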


\begin{proof} The knot $K_{U,n}$ is of order two: $ K_{U,n} \cs K_{U,n}$ is slice.  Thus, the previous argument shows that there is some $b'$ satisfying $1+b'^2 \equiv 0 \mod 4n^2+1$ such that for all $x$,  $d(M(K_{U,n}), \spinc_{x}) + d( M(K_{U,n}),\spinc_{b'x}) = 0$.   Replacing $x$ with $bx$ from the previous lemma yields    $d(M(K_{U,n}), \spinc_{bx}) + d( M(K_{U,n}),\spinc_{b'bx}) = 0$.  The rest is arithmetic along with a renaming of variables.

\end{proof}

\noindent{\bf Completion of the proof of Theorem~\ref{main1}}

According to Proposition~\ref{prop:dmn}, there is a specific \Sp\ structure $\spinc_\alpha$ such that for all $k$ with $0 \le k < n/2$, 
$$d(M(K_{D_k, n}, \spinc_\alpha) )- d(M(K_{U, n}, \spinc_\alpha)) = -2k.$$  Applying Lemma~\ref{lemma-b}, for each $k$ and some $b$ satisfying $1 + b^2 \equiv 0 \mod 4n^2 +1$, we have
$$d(M(K_{U, n}), \spinc_{b \alpha} )- d(M(K_{U, n}), \spinc_\alpha) = -2k.$$  
Since $4n^2+1$ is the product of at most two primes, there are at most four values of $b \mod 4n^2 +1$ for which $  b^2 \equiv -1 \mod 4n^2 +1$.  Thus, the expression on the left of the equality can have at most four distinct values.   As long as $n\ge 9$ the number of elements in the interval $0 \le k < n/2$ contains at least five elements, so we can choose $k$ so that the equality is violated.  Any such choice can serve as $k_n$.

%%%%SUBSECTION %%%%%%%%
 
  %%%%%%%SECTION%%%%%%%%%%%%%%
\section{Heegaard Floer  complexes}\label{sectionhf1}

The computation of the $d$--invariants of interest depends upon a detailed understanding of  related Heegaard Floer complexes.  The main result in this section is Theorem~\ref{thmrefilter}, the {\it refiltering theorem}, which describes the chain complex associated to the meridian of a knot $K$ within the manifold $S^3_{-N}(K)$ in terms of the chain complex associated to $K$ within $S^3$.

%%%%SUBSECTION %%%%%%%%
\subsection{Three-manifold complexes.} \label{background} We let $\ff$ denote the field with two elements.  As mentioned earlier, given a 3--manifold $M$ with \Sp\ structure $\spinc$, there is an associated  $\Z$--filtered $\qq$--graded complex $CF^\infty(M,\spinc)$.  This complex is a free, finitely generated $\ff[U, U^{-1}]$--module, which is well-defined up to filtered chain homotopy equivalence.   The filtration of $CF^\infty(M,\spinc)$ by subcomplexes is induced by a natural filtration of $\F[U,U^{-1}]$ by powers of $U$.  More precisely, we can regard $\F[U,U^{-1}]$ as an (infinitely generated) $\F[U]$-module in the obvious way.  As such, it has an exhaustive $\Z$-indexed filtration by (free) $\F[U]$-submodules $$ ...\subset U^{k}\F[U]\subset U^{k-1}\F[U]\subset U^{k-2}\F[U]\subset ...,$$ and  this filtration induces a $\Z$-filtration of $\CF^\infty(M,\spinc)$ by subcomplexes. Thus the filtration level of a chain in  $\CF^\infty(M,\spinc)$, regarded as a sum of Laurent polynomials in the basis elements,  is given by the negative of the  minimum power of $U$ which appears in this polynomial.  The action of $U$ clearly lowers filtration level by one.   It lowers grading by two.  

Added notation permits the simple representation of subcomplexes; for instance, we denote the subcomplex consisting of elements of filtration level at most $n$ by $CF^\infty(M,\spinc)_{\{i \le n\}}$.  With this we
can define several associated complexes, $$CF^-(M,
\spinc)=CF^\infty(M,\spinc)_{\{i <0 \}}, \ \ CF^+(M,\spinc) =   CF^\infty(M,\spinc)/  CF^\infty(M,\spinc)_{\{i <0 \}},$$  and  $$\widehat{CF}(M,\spinc) =   CF^\infty(M,\spinc)_{\{i \le 0\}}/  CF^\infty(M,\spinc)_{\{i <0 \}}.$$
There are corresponding homology groups, $HF^-(M,\spinc), HF^+(M,\spinc)$ and $\widehat{HF}(M,\spinc) $.

There will also be situations in which we must shift the gradings of elements in these chain complexes.  For instance, we will write $CF^+(M,\spinc)[\epsilon]$ for the same complex as  $CF^+(M,\spinc)$, except with  the homological grading of any element increased by $\epsilon$;  that is,  $$CF^+_*(M,\spinc)[\epsilon] = CF^+_{*-\epsilon}(M,\spinc), $$
 for all $*$.
 
\begin{definition} The \emph{$d$--invariant} $d(M,\spinc)$ is given by
$$\min\{ gr(\alpha) \ | \ \alpha\ne0 \in HF^+(M,\spinc)\ \text{and}\ \alpha\in \text{Image}\ U^n\ \text{for all}\ n>0\},$$
where  $gr(\alpha)$ is the homological grading.
\end{definition}

%%%%SUBSECTION %%%%%%%%
\subsection{Knot complexes.}

 A knot $K \subset M$  induces a second $\Z$--filtration of the complex $CF^\infty(M,\spinc)$, which thus becomes a $\qq$--graded, $\Z\oplus\Z$--filtered  complex.  The $U$ action respects the second filtration, lowering this filtration by one as well.  This doubly filtered complex is denoted $CFK^\infty(M,K, \spinc)$, and again there are associated subcomplexes such as $CFK^\infty(M,K,\spinc)_{\{i \le m, j \le n \}}$.  As in the 3--manifold case,  there are quotient complexes $CFK^+(M,K, \spinc) = CFK^\infty(M,K,\spinc) /CFK^\infty(M,K,\spinc)_{\{i < 0\}}$ and $\widehat{CFK}(M,K,\spinc)= CFK^\infty(M,K,\spinc)_{\{i \le 0\}} /CFK^\infty(M,K,\spinc)_{\{i < 0\}}$.  Note that ignoring the $j$ filtration yields the corresponding complexes for $(M, \spinc)$.

Figure~\ref{figurecomplexes-knots} illustrates the complexes for the unknot and the $(2,5)$--torus knot in $S^3$.  (For alternating knots $K$, $CFK^\infty(S^3,K)$ is determined simply from the Alexander polynomial~\cite{os1}.) The dots represent elements in a filtered $\F$--basis and the  line segments indicate components of the boundary operator.  Sometimes we will not need to include arrows on the segments; the fact that the boundary map cannot increase either filtration  and $\partial^2 = 0$ will make the direction unambiguous in most of the examples we consider. The gradings are not indicated in the diagram; the coordinates in the diagram correspond to the filtration, as follows: the vertical and horizontal axes in bold separate elements of filtration levels $-1$ and $0$.  That is, the dot just above and to the right of the origin has filtration level $(0,0)$.  The action of $U$ shifts the diagram down and to the left by one.  \vskip.1in

 \noindent{\bf Convention.} In all the cases we consider, $CFK^\infty(M,K, \spinc)$  is filtered chain homotopy equivalent to  $C \otimes_\ff {\ff[U,U^{-1}]} $ for some finite $\Z\oplus\Z$--filtered $\ff$--complex $C$.  We will simplify  our diagrams and illustrate only $C$, leaving out all of its $U$ translates.

\begin{figure}[h]
\fig{.5}{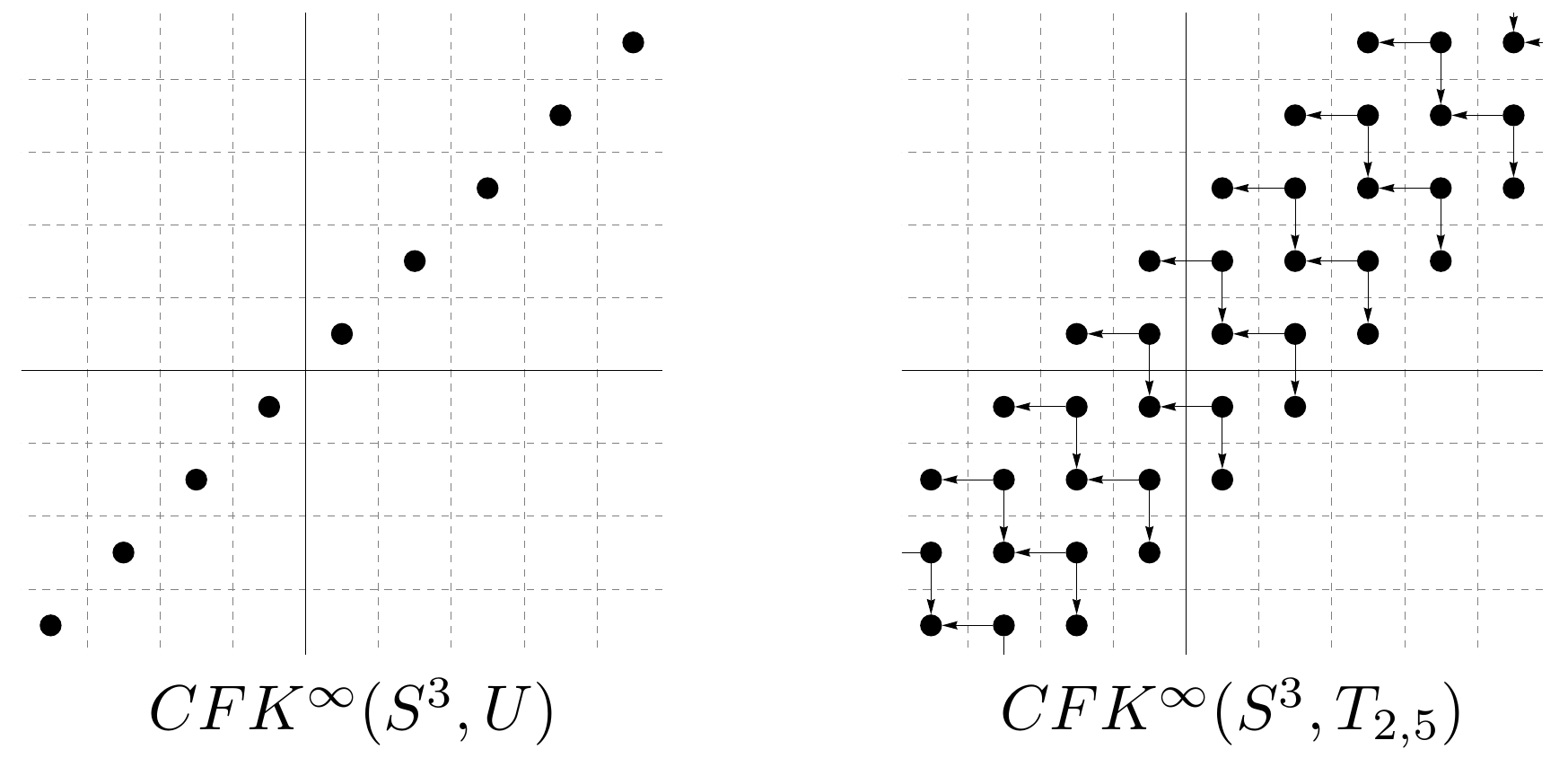}
\caption{}\label{figurecomplexes-knots}
\end{figure}

%%%%SUBSECTION %%%%%%%%
\subsection{Gradings.}
To this point we have not described how the homological grading is determined.  Rather than review this aspect of the theory, we refer the interested reader to ~\cite{os2,HolDiskFour} for definitions and details.  For our purposes, the following elementary observation will be  particularly useful:   the value of $d(M, \spinc)$ can be used to determine the gradings of elements in $CFK^\infty(M,K, \spinc)$.  We illustrate this with an important example.

In the special case of $S^3$ there is only one \Sp\ structure, denoted $\spinc_0$.    We have $HF^+(S^3, \spinc_0) = \ff[U,U^{-1}]/U\ff[U]$ and  by definition $d(S^3, \spinc_0) = 0$.  For example, in the complex $CFK^+(S^3, T_{2,5}, \spinc_0)$  (constructed from the complex illustrated on the right in Figure~\ref{figurecomplexes-knots} by quotienting by all elements to the left of the vertical axis), we see that the non-trivial homology class with least grading is represented by the cycles living in filtration levels $(0,2), (1,1)$, and $(2,0)$.  Thus, all of these have grading 0.

%%%%SUBSECTION %%%%%%%%
\subsection{Meridians of knots in surgered manifolds.}

Let $S^3_{-N}(K)$ denote the manifold constructed as $-N$ surgery on $K \subset S^3$ and let $\mu$ denote the meridian of $K$, viewed as a knot in $S^3_{-N}(K)$.  The work of~\cite{hedden1} can be extended to show that for each \Sp\ structure $\spinc_m$,  the complex $CFK^\infty(S^3_{-N}(K),\mu,\spinc_m)$ is isomorphic to $CFK^\infty(S^3,K)$, but endowed with a different $\Z\oplus\Z$--filtration and an overall shift in the homological grading. We state the result for a knot in a general $3$--manifold.
 \vskip.05in

\noindent {\bf Notation}  Notice that until now, \Sp\ structures were denoted $\spinc_\alpha$, where $\alpha \in H^2(M)$. Here they have been denoted $\spinc_m$ with $m$ an integer (viewed, modulo $N$, in $\Z/N\Z$), according to the convention described in the note at the end of Section \ref{spinc}. 
\vskip.05in

 \begin{theorem}[\bf Refiltering Theorem]\label{thmrefilter} Suppose $N \ge 2g(K)$. For $m$ in the interval $$\lceil (-N+1)/2 \rceil \le m \le \lfloor N/2
  \rfloor,$$  the complex $CFK^\infty(Y^3_{-N}(K), \mu, \spinc_m)$ is isomorphic to  $CFK^\infty(Y^3, K)[\epsilon_1]$ as an unfiltered complex, where $[\epsilon_1]$ denotes a grading shift  that depends only only on $m$ and $N$.  Given a generator $\{[x,i,j]\}$ for $CFK^\infty(Y^3, K)$, the $\Z\oplus\Z$ filtration level of the same generator, viewed as a chain in $CFK^\infty(Y^3_{-N}(K), \mu, \spinc_m)$, is given by:

$$\mathcal{F}_m([x,i,j]) = \begin{cases}
[i,i] & \text{if\ \   } j > i+m,\\
[j-m, j-m-1] & \text{if\ \   } j \le i+m.
\end{cases} $$

\end{theorem}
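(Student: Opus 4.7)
The plan is to adapt Hedden's analysis of meridians in zero-surgery manifolds \cite{hedden1} to the $-N$-surgery setting, while carefully tracking the two filtrations and a homological grading shift. Start with a genus-$g$ Heegaard diagram $(\Sigma, \alpha, \beta_1, \ldots, \beta_{g-1}, \mu)$ for $Y$ in which the last $\beta$-curve is the meridian of $K$, and place basepoints $w, z$ on either side of $\mu$, so that $(\Sigma, \alpha, \beta, w, z)$ is a doubly-pointed diagram for $(Y, K)$. To model $Y^3_{-N}(K)$, replace $\mu$ by a curve $\gamma$ on $\Sigma$ representing the $-N$-surgery slope, obtained by winding $\mu$ about a longitude $\lambda$ of $K$ drawn on $\Sigma$. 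The meridian $\mu$, viewed now as a knot in the surgered manifold, is recorded by a second pair of basepoints $w', z'$ placed on either side of a small parallel copy of $\mu$ sitting inside the winding region of $\gamma$.

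Next I would identify generators and differentials of the resulting doubly-pointed diagram with those of $\cfk^\infty(Y, K)$. The hypothesis $N \ge 2g(K)$ lets the winding argument of \cite{Knots, hedden1} go through: intersections $T_\alpha \cap T_\gamma$ lying in $\spinc_m$ are in canonical bijection with intersections $T_\alpha \cap T_\beta$ of the original diagram, and every holomorphic disk contributing to the differential on $\cfk^\infty(Y^3_{-N}(K), \mu, \spinc_m)$ is either supported away from the winding region (and hence agrees with a disk of the original complex) or is ruled out by the genus bound. This produces an isomorphism of underlying $\ff[U, U^{-1}]$-chain complexes, up to an overall grading shift $\epsilon_1$ pinned down by the standard formula for absolute gradings of surgery spin$^c$ structures in \cite{os2}.

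To extract the filtration formula, I would track the multiplicities $n_{w'}$ and $n_{z'}$ of the new basepoints on the disks realizing the generator bijection, so that the $\zz \oplus \zz$-filtration of the generator corresponding to $[x, i, j]$ is determined, up to a common shift, by these multiplicities. A direct count shows that whether the pair $(i, j)$ satisfies $j > i + m$ or $j \le i + m$ determines on which side of the parallel copy of $\mu$ the corresponding winding strand passes, and these two alternatives yield the values $[i, i]$ and $[j - m, j - m - 1]$ respectively.

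The main obstacle is executing this last combinatorial step cleanly. Correctly positioning $w'$ and $z'$ and then tracking basepoint multiplicities through the winding region requires bookkeeping that is easy to get off by one, and the $-1$ shift in the second coordinate of the $j \le i + m$ case is exactly such a subtlety. The range $\lceil(-N+1)/2\rceil \le m \le \lfloor N/2 \rfloor$ is used precisely to guarantee that every generator falls unambiguously into one of the two cases and that the spin$^c$ bijection of \cite{Knots} is valid throughout.
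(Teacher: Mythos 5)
Your proposal captures the right general strategy for the first half of the proof (replicating the winding-region analysis of \cite{hedden1} to extract the two filtrations and the grading shift, with the two-step filtration formula coming from tracking basepoint multiplicities as in \cite[Lemma 4.2]{hedden1}). However, the second half---the claim that the hypothesis $N \ge 2g(K)$ ``lets the winding argument of \cite{Knots, hedden1} go through'' because problematic holomorphic disks are ``ruled out by the genus bound''---misidentifies the actual obstruction and, as written, has a genuine gap.

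The issue in \cite{hedden1} is not that extra holomorphic disks appear for small $N$ and need to be excluded. The winding-region analysis requires that an entire $\SpinC$ equivalence class of \emph{generators} (not disks) be supported in the winding region, after which the moving-basepoint trick propagates the conclusion to all $\SpinC$ structures on $Y_{-N}(K)$. In \cite{hedden1} this is achieved by a pigeonhole argument that needs $N \gg 0$; it does \emph{not} automatically hold once $N \ge 2g(K)$. To get the effective bound, the paper's proof chooses a Heegaard diagram adapted to a genus-$g$ Seifert surface (as in the adjunction inequality proof), so that the periodic domain $\mathcal P$ cut out by the surface meets only $2g$ of the other $\alpha$-curves, and then applies the first Chern class formula $\langle c_1(\spinc_w(\mathbf x)), [\mathcal P]\rangle = e(\mathcal P) + 2\sum n_{x_i}(\mathcal P)$ to the exterior generators (those supported on the curve $\lambda$ for $Y_0(K)$ rather than in the winding region). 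This yields $-2g+2 \le \langle c_1(\spinc_w(\mathbf x)),[\mathcal P]\rangle \le 2g$, hence at most $|H_1(Y)|\cdot 2g$ distinct $\SpinC$ equivalence classes among exterior generators; since $Y_{-N}(K)$ has $|H_1(Y)|\cdot N$ total, the condition $N \ge 2g$ forces at least one equivalence class to be entirely interior. None of this is supplied or replaced by anything in your sketch, and a disk-exclusion argument based on genus would not produce the bijection of generators your step (2) asserts. You should replace that step with a counting argument for exterior generators along the lines above.
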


\noindent Before discussing its proof, we illustrate this theorem in Figure~\ref{figurerefilter}, which shows for all $N \ge 8$  the complexes $CFK^\infty(S^3_{-N}(K), \mu, \spinc_m)$ for $K = U$ and $K = -T_{2,5}$, with $-3\le m \le 4$.  We show only the $\ff$--subcomplex that generates the full  complex over $\ff[U,U^{-1}]$.

\begin{figure}[h]
     \fig{1}{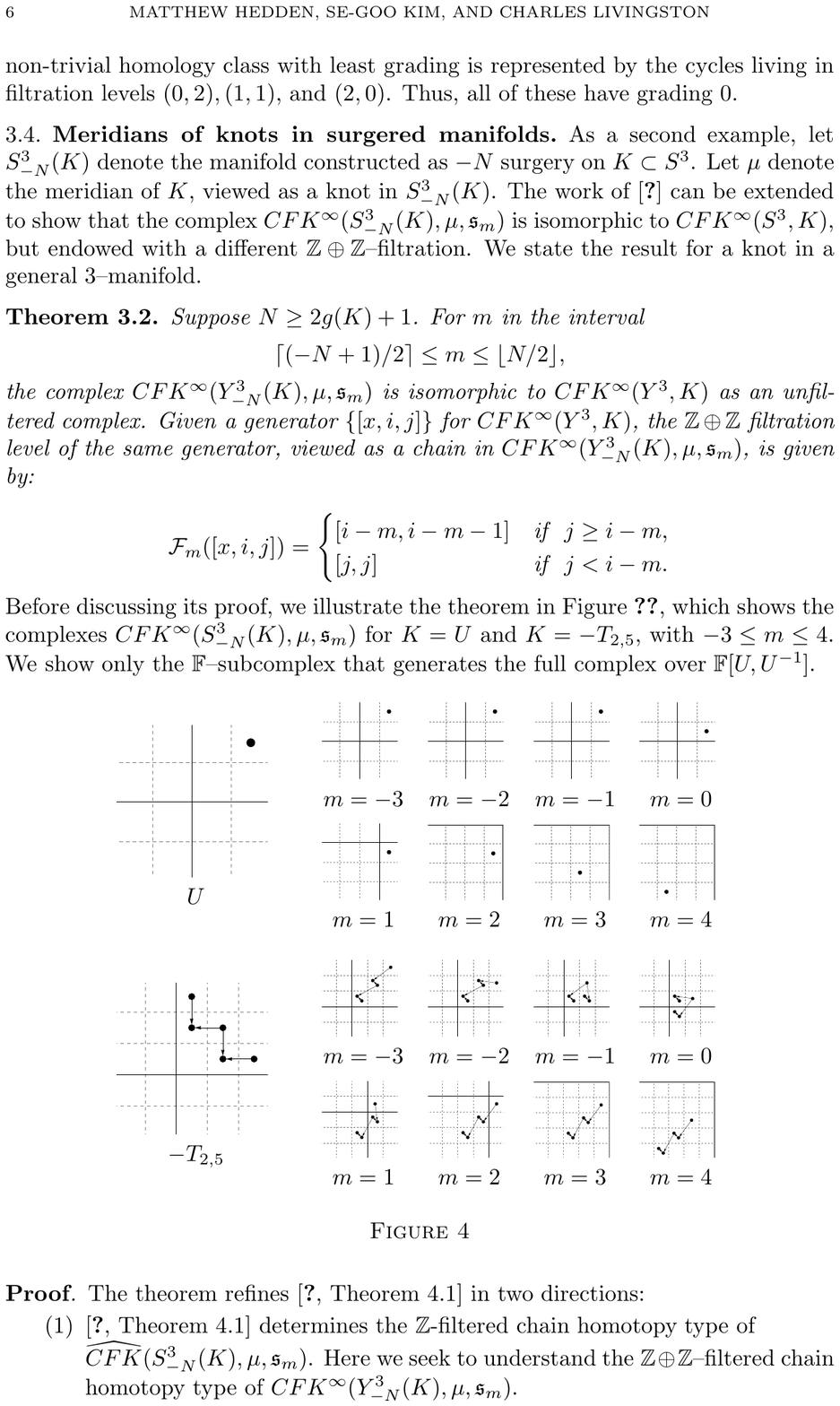}  
        \caption{}\label{figurerefilter} 
\end{figure}

\begin{proof}
The theorem refines \cite[Theorem 4.1]{hedden1} in two directions:
\begin{enumerate}
\item \cite[Theorem 4.1]{hedden1} determines the $\Z$--filtered chain homotopy type of  \newline $\CFKa(S^3_{-N}(K), \mu, \spinc_m)$.   Here we seek to understand the $\Z\oplus\Z$--filtered chain homotopy type of $CFK^\infty(Y^3_{-N}(K),\mu,\spinc_m)$.

\item \cite[Theorem 4.1]{hedden1}  applies  for $N\gg 0$.  We wish to show that $N=2g(K)$ suffices.
\end{enumerate}

The first refinement is an immediate extension of the proof from \cite{hedden1}, so we do not belabor the details here. To begin, we note that the difference between $S^3$ and a general $3$--manifold is merely notational.  The key idea from \cite{hedden1} was to observe that with the addition of another basepoint, the natural Heegaard diagram for $-N$--framed surgery on $K$ could be made to represent the knot $\mu\subset Y_{-N}(K)$.  The proof of \cite[Theorem 4.1]{Knots} shows that the $\Z$--filtered chain homotopy type of $CF^{\infty}(Y_{-N}(K),\spinc_m)$ is determined by that of $CFK^{\infty}(Y,K)$.  This implies that  the chain homotopy type of the  complexes $\CFm(Y_{-N}(K),\spinc_m)$, $\CFp(Y_{-N}(K),\spinc_m)$, and $\CFa(Y_{-N}(K),\spinc_m)$ are also determined by $CFK^{\infty}(Y,K)$, as they are sub, quotient, and subquotient complexes of the filtration, respectively. Now the meridian $\mu\subset Y_{-N}(K)$ induces an additional $\Z$--filtration of any of these complexes, and \cite[Theorem 4.1]{hedden1} determined that in the case of $\CFa(Y_{-N}(K),\spinc_m)$, the additional $\Z$--filtration consists of two steps:
$$ 0 \subseteq CFK^\infty(Y,K)_{\{i\ge 0, j= m\}}\subseteq CFK^\infty(Y,K)_{\{\text{min}(i,j-m)=0\}},$$
where the subquotient on the right was identified with $\CFa(Y_{-N}(K),\spinc_m)$ by~\cite[Theorem 4.1]{Knots}.  Strictly speaking, the proof of ~\cite[Theorem 4.1]{hedden1} only dealt with the case of  positive framed surgery explicitly, leaving the case of negative framings to the reader.  The analogous proof for negative framings yields the two-step filtration above, and   the extension to $CFK^{\infty}$ follows  easily from the same proof.\footnote{It is worth noting that the formula from ~\cite[Theorem 4.1]{hedden1} was actually for the filtration induced by  $\mu^r$, the meridian of $K$ with reversed orientation.  The formula above is for the meridian with its standard orientation.}   To be more precise, \cite[Theorem 4.1]{Knots} identifies $CF^{\infty}(Y_{-N}(K),\spinc_m)$ with $CFK^{\infty}(Y,K)$ via a chain map which was denoted $\Phi$.  This isomorphism of chain complexes respects the $\F[U,U^{-1}]$--module structure of both complexes, and hence one of the $\Z$--filtrations.  The additional  $\Z$--filtration on $CF^{\infty}(Y_{-N}(K),\spinc_m)$ induced by $\mu$ can be determined in exactly the same manner as it was determined for the case of $\CFa(Y_{-N}(K),\spinc_m)$ in \cite{hedden1}, yielding the statement of the theorem.  In both cases, the key lemma is \cite[Lemma 4.2]{hedden1}, which identifies the $\Z$--filtration induced on any given $i=$constant slice in $CF^{\infty}(Y_{-N}(K),\spinc_m)$  with a two step filtration as above. 

For the second refinement, recall that the proof of \cite[Theorem 4.1]{hedden1} relies on making the surgery parameter large enough so that an entire $\SpinC$ equivalence class of generators  for $Y_{-N}(K)$ is {\em supported  in the  winding region} (by definition, we say that a generator is supported in the winding region if it is represented by a $k$-tuple of  intersection points which contains a point in the region shown in \cite[Figure 13]{hedden1}).   This is achieved by a pigeonhole argument: there are only finitely many $\SpinC$ equivalence classes that can be represented by the finitely many generators  {\em not} supported in the winding region, and increasing $N$ increases the number of $\SpinC$ structures without bound.  Once we have an entire $\SpinC$ equivalence class supported in the winding region, we can  appeal to the technique of ``moving the basepoint."  In the present context  this means moving the placement of the meridian and nearby collection of basepoints   throughout the winding region; see \cite[Theorem 4.3]{hedden1}.    This technique allows us to use the single $\SpinC$ equivalence class of intersection points which is supported  in the winding region to represent all $|H_1(Y)|\cm N$ different $\SpinC$ structures on $Y_{-N}(K)$ (for a manifold with $b_1(Y)>0$,  $|H_1(Y)|$ should be replaced by the number of $\SpinC$ structures on $Y$ represented by the diagram).   Thus the question is reduced to finding a topological interpretation for the number of $\SpinC$ classes represented by generators which are not supported in the winding region.  We will henceforth refer to such generators as {\em exterior}.

To  achieve a bound for the number of $\SpinC$ classes represented by exterior generators,  we use a particular Heegaard diagram which is adapted to a Seifert surface for $K$ with genus $g$.  A similar Heegaard diagram appears in the proof of the adjunction inequality \cite[Theorem 5.1]{Knots}; such a diagram is constructed 
explicitly in~\cite[Lemma 7.3]{oz:hf-properties} and~\cite[Proof of Theorem 2.1]{Ni}.  The diagram consists of a quadruple, 
$$(\Sigma_k, \vec{\alpha}=\{\alpha_1,...,\alpha_k\}, \vec{\beta}=\{\beta_1,...,\beta_{k-1},\mu,\lambda\},\{w\cup z\}),$$
where $(\Sigma,\vec{\alpha},\vec{\beta}\setminus \mu)$ and $(\Sigma,\vec{\alpha},\vec{\beta}\setminus \lambda)$ are Heegaard diagrams for $Y_0(K)$ and $Y$, respectively, and $\{w\cup z\}$ specifies $K$ on the latter diagram.  The key features of the diagram are that 
\begin{itemize}
\item There is a  domain $\mathcal{P}$ with $\partial{\mathcal{P}}=\alpha_k\cup \lambda$ such that  $\mathcal{P}\cup \{\text{Disk bounded by} \ \alpha_k\}$ is isotopic to the chosen Seifert surface. 
\item The only $\alpha$ curves which intersect $\mathcal{P}$ are $\alpha_k$ and $\alpha_1,...,\alpha_{2g}$, where $g$, as above, is the genus of $K(=\text{genus of}\ \mathcal{P})$.
\end{itemize}  Now we observe that the diagram
$$(\Sigma_k, \vec{\alpha}=\{\alpha_1,...,\alpha_k\}, \vec{\beta}=\{\beta_1,...,\beta_{k-1},\lambda^{-N}\},\{w\})$$
specifies $Y_{-N}(K)$, where $\lambda^{-N}$ is a simple closed curve isotopic to the resolution of $N$ parallel copies of the reversed meridian $\mu^r$ and one copy of $\lambda$.  Furthermore, with an additional point $z'$  the diagram specifies the knot $\mu\subset Y_{-N}(K)$.  As above, the generators of $CFK^{\infty}(Y_{-N}(K),\mu)$ arising from this diagram are split according to whether they are supported in the  winding region  or are exterior.  The exterior generators  are characterized by the fact that the point of intersection occurring on $\lambda^{-N}$  lies outside the winding region  (recall that a generator is a $k$-tuple of intersection points between $\alpha$ and $\beta$ curves, with each $\alpha$ and $\beta$ curve appearing exactly once; thus $\lambda^{-N}$ is used exactly once by any $k$-tuple comprising a generator).  The exterior generators are in bijection with generators for the Heegaard diagram of $Y_0(K)$ (the diagram with $\lambda$ as the last curve).  Our bound of $2g(K)$ in the theorem will be attained if we can argue that the total number of $\SpinC$ equivalence classes represented by the exterior points is less than $|H_1(Y)|\cdot 2g$.   This follows  from the key properties of our Heegaard diagram.  Indeed, recall the first Chern class formula \cite[Proposition 7.5]{oz:hf-properties}:\begin{equation}\label{eq:chern} \langle c_1(\spinc_w({\bf x})), [\mathcal{P}]\rangle = e(\mathcal{P}) + 2\sum_{x_i\in {\bf x}} n_{x_i}(\mathcal{P}).\end{equation}
Here, $\bf x$ is a $k$-tuple generating a Heegaard Floer complex, $[\mathcal P]\in H_2$ is the second homology class obtained by capping off the boundary components of a periodic domain, $e(\mathcal{P})$ is the Euler measure of $\mathcal{P}$ (which agrees with the Euler characteristic for  periodic domains with all  multiplicities zero or one) and $n_{x_i}(\mathcal{P})$ is the average of the local multiplicities of $\mathcal{P}$ in the four regions surrounding an intersection point $x_i$.     For our particular Heegaard diagram for $Y_0(K)$, the right-hand side of \ref{eq:chern} becomes:
$$ -2g + 2\#\{x_i\in \text{interior}(\mathcal{P})\} + 2,$$ 
where $-2g$ is the Euler characteristic of $\mathcal{P}$. The additional $+2$ term comes from the fact that $\alpha_k$ and $\lambda$ do not intersect and must each contain an $x_i\in \bf x$.  Since $\alpha_k$ and $\lambda$ are on the boundary of $\mathcal{P}$,  each of these two $x_i$ have $n_{x_i}(\mathcal{P})=1/2$.  Finally, the fact that there are only  $2g$ other $\alpha$ curves which intersect $\mathcal{P}$ and that any $k$-tuple comprising a generator  must use one of these $\alpha$ curves for  the intersection point  $x_i\subset\lambda$   implies that 
$$ 0\le 2\#\{x_i\in \text{interior}(\mathcal{P}) \}\le 2(2g-1),$$ thus showing that 
$$ -2g+2\le \langle c_1(\spinc_w({\bf x})), [\mathcal{P}]\rangle \le 2g.$$
Now the fact that $\langle c_1(\spinc_w({\bf x})), [\mathcal{P}]\rangle$ is an even integer which vanishes if $c_1(\spinc_w({\bf x}))$ is torsion implies that there are at most $|H_1(Y)|\cdot 2g$ distinct $\SpinC$ equivalence classes represented on the Heegaard diagram for $Y_0(K)$, and hence the same bound exists for the number of exterior intersection points.  This completes the proof.
\end{proof}

%%%%%%%SECTION%%%%%%%%%%%%%%
\section{ The complex $CFK^\infty( S^3_{-N}(-2D_k), 2D_k)$   }\label{sectionhf2}

In general, the computation of the $d$--invariant of surgery on a knot $K\subset Y$ from $CFK^\infty(Y,K,\spinc)$ can be rather challenging; identifying patterns among the values that arise for various values of $\spinc$ is even more subtle.  If the surgery coefficient is appropriately large, however, there are significant simplifications.  This section describes the general theory and demonstrates that in our setting the simplifications that arise from the large surgery assumption do apply.   

To be more specific ~\cite[Theorem 4.1]{Knots} showed that the complex $CFK^\infty(S^3,K)$ determines  $CF^{+}(S^3_N(K), \spinc_m)$ for  $N \ge 2g(K) -1$, with a similar result proved for null-homologous knots in arbitrary $3$-manifolds. In~\cite[Theorem 4.1]{os4} this was generalized to rationally null-homologous knots, in which case   $CF^{+}(Y_N(K), \spinc_m)$ depends on the complexes $CFK^\infty(Y^3,K, \spinc'_{m'})$ for specified classes $\spinc'_{m'}$.  However, the generalization of \cite{os4} did not specify how large the framing parameter had to be in order to apply the result.  Rather, it simply showed that for sufficiently large framings such a formula exists, and then  a more general formula was proved which holds for arbitrary framings in terms of a mapping cone complex.   In our situation we will  apply a special case of the results of~\cite{os4}, taking advantage of the fact that $Y = S^3_{-2n}(-2D_{k})$, and that we are performing $2n$--surgery on a knot formed as the connected sum of a knot in $S^3$ with  the meridian of $-2D_{k}$.  While we utilize the full mapping cone complex, our surgery parameters are chosen so that they will be large enough for the simpler formula to hold.   This will manifest itself in a collapse of the mapping cone complex to a single term. In general, ``large" should be taken to mean: ``large in comparison to the Thurston norm of the complement."

Here is the statement of the result we need.  The exact correspondence between the \Sp\ structures $\spinc_m$ and $\spinc'_{m'}$ is implicit in the proof but is not needed in our application of the theorem.

 \begin{theorem}\label{dsurgthm}
Let $K_2 \subset Y =  S^3_{-N}(K_1)$ be a knot of the form $\mu \# K'_2$ where $\mu $ is the meridian of $K_1$ and $K'_2$ is a knot in $S^3$.  For any $N \ge \max (2g(K'_2)+2, 2g(K_1) )$, there is an enumeration of \Sp\  structures on $Y_{N}(K_2)$, $\{\spinc_m\}_{- N^2/2 \le m \le  N^2/2}$, such that $CF^+(Y_{N}(K_2),\spinc_m)$  is isomorphic to  $$CFK^\infty (S^3_{-N}(K_1),K_2,\spinc'_{m'})/ CFK^\infty(S^3_{-N}(K_1),K_2,\spinc'_{m'})_{\{i < 0 , j <m \}}   [\epsilon].$$
 The  elements in the above  quotient  with $i = 0, j \le m$ and $i \le 0, j= m$ are at filtration level 0 in $CF^+(Y_{N}(K_2) , \spinc_m)$; these represent     $\widehat{CF}(Y_{N}(K_2),\spinc_m)$.  The induced map  $U$ lowers filtration level by 1.
 The grading shift, $\epsilon$, is a function of $m$ and $N$, and   in particular, the grading shift does not depend on $K_2'$.
  \label{thm:gradingshift}
\end{theorem}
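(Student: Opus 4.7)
The plan is to derive Theorem~\ref{dsurgthm} from the rational surgery mapping cone formula of Ozsv\'ath--Szab\'o~\cite{os4}, with the extra content being a verification that the stated threshold on $N$ is enough to force the cone to collapse onto a single term.

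First, since $K_2$ splits as a connected sum $\mu \# K_2'$, the K\"unneth formula for the knot Floer complex of a connected sum gives a bifiltered identification
\[
CFK^\infty(Y, K_2, \spinc'_{m'}) \;\simeq\; CFK^\infty(Y, \mu, \spinc'_{m'}) \otimes_{\ff[U,U^{-1}]} CFK^\infty(S^3, K_2').
\]
The first factor is described explicitly by the Refiltering Theorem~\ref{thmrefilter}, provided $N \ge 2g(K_1)$; this accounts for one half of the largeness hypothesis and makes the bifiltration on $CFK^\infty(Y,K_2)$ fully computable from $CFK^\infty(S^3, K_1)$ and $CFK^\infty(S^3, K_2')$.

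Next, I would apply the rational surgery mapping cone formula from~\cite{os4} for $N$--framed surgery on the rationally null-homologous knot $K_2 \subset Y$. This presents $CF^+(Y_N(K_2), \spinc_m)$ as the homotopy cokernel of a chain map
\[
D_m : \bigoplus_{s} A^+_s(K_2) \longrightarrow \bigoplus_{s} B^+_s,
\]
where each $A^+_s(K_2)$ is a quotient complex of $CFK^\infty(Y, K_2)$ of precisely the shape appearing in the theorem, and each $B^+_s$ records the $CF^+$ of the ambient manifold. The key step is to show that once $N \ge 2g(K_2')+2$, the map $D_m$ is a quasi-isomorphism onto every $B^+_s$ summand except the one indexed by $s=m$, so the cone collapses onto a single $A^+_m(K_2)$. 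I would prove this by adapting the vertex-by-vertex large-$N$ argument from the proof of \cite[Theorem~4.1]{Knots} to the rational setting, bounding the rational Seifert genus of $\mu\#K_2'$ in $Y$ as the sum of $g(K_2')$ and a contribution from a rational Seifert surface of $\mu$. After collapse, the identification of the filtration-$0$ slice with $\widehat{CF}(Y_N(K_2),\spinc_m)$ is formal from the short exact sequence relating $\widehat{CF}$ with $CF^+$, and the grading shift $\epsilon$ depends only on $(m,N)$ because it comes from the rational correction attached to $c_1(\spinc_m)$ in the mapping cone, not from the particular complex $CFK^\infty(S^3,K_2')$ being fed in.

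The main obstacle is the quantitative collapse step. In \cite{os4} the large-surgery simplification in the rationally null-homologous case is only asserted ``for $N$ sufficiently large,'' without an explicit threshold. Pinning down the stated bound $N \ge \max(2g(K_2')+2, 2g(K_1))$ requires reopening the proof there, locating the adjunction-type inequalities that kill the ``outer'' $B^+_s$ summands of the cone, and using the special structure $K_2=\mu\#K_2'$ to express the relevant Thurston norm on $Y \setminus K_2$ cleanly in terms of $g(K_1)$ and $g(K_2')$. The bulk of the work lies in this Thurston norm bookkeeping; everything else is a direct application of existing formulae.
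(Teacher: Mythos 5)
Your overall route is the same as the paper's: apply the K\"unneth formula to split $CFK^\infty(Y, \mu\#K_2')$ as the tensor product of $CFK^\infty(Y,\mu)$ (controlled by the Refiltering Theorem when $N\ge 2g(K_1)$) with $CFK^\infty(S^3,K_2')$, feed the result into the Ozsv\'ath--Szab\'o mapping-cone formula for surgery on a rationally null-homologous knot, and show that the cone degenerates to a single $A$-term. Where you stop, though, is exactly where the paper's main content lies: you defer the quantitative collapse to unspecified ``Thurston norm bookkeeping'' and to ``reopening the proof'' of the rational surgery formula, and you even acknowledge that this is the bulk of the work. The paper does not reopen that proof at all.

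The missing ingredient is the Collapse Theorem (Theorem~\ref{collapsethm}), which replaces any adjunction-style argument with a direct, elementary analysis of the mapping cone. The paper introduces the width $\Wd(C)$ of a doubly filtered complex (the height of the smallest $U$-invariant diagonal band containing it) and proves: if every complex $CFK^\infty(Y,K,\underline{\spinct}_l)$ with $0\le l\le N-1$ has $\Wd\le N$, then the cone $\A\to\B$ collapses to a single $A_i$. The proof is a bookkeeping argument, but a different and cleaner one than what you sketch: the relative \Sp\ structures in $S(\spinc)$, written out in the coset form given in Section~\ref{mappingconesection}, are listed so that the $j$-filtration shift between consecutive $A_i$-summands jumps by $N$ (or $N+1$ once per period), so at most one summand can have its band straddle the origin; for all the others either $v$ or $h$ is a quasi-isomorphism, and they cancel. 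Your hypothesis $N\ge 2g(K_2')+2$ then enters not through a rational genus estimate for $\mu\#K_2'$ in $Y$ but through the simple tensor-additivity $\Wd(C_1\otimes C_2)=\Wd(C_1)+\Wd(C_2)-1$ together with $\Wd(CFK^\infty(Y,\mu,\spinc))\le 2$ from the Refiltering Theorem and $\Wd(CFK^\infty(S^3,K_2'))=2g(K_2')+1$ from \cite{GenusBounds}. One further small imprecision: you describe the collapse as the map being a quasi-isomorphism onto all $B^+_s$ except $s=m$; in fact what cancels is the subcomplex spanned by all $B$'s together with all but one $A$, leaving a single $A_i$, and one needs to check that the acyclic subcomplex/quotient reduction works for the two-sided infinite cone, which the paper does explicitly. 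Supplying the Collapse Theorem (or an equivalent explicit criterion) is what would turn your outline into a proof.
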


Applying this theorem to the relevant manifolds yields the following corollary:

\begin{corollary} For any $0\le k< n/2$, there is an enumeration of \Sp structures on  $M(K_{D_{k},n})$, $\{\spinc_m\}_{- 2n^2 \le m \le  2n^2}$ for which  $CF^{+} (M(K_{D_{k},n}),\spinc_m)$ is isomorphic to $$\frac{CFK^\infty (S^3_{-2n}(-2D_k),\mu\#2D_k,\spinc'_{m'})}{CFK^\infty(S^3_{-2n}(-2D_k),\mu\#2D_k,\spinc'_{m'})_{\{i < 0 , j <m \}}}   [\epsilon],$$
with filtration, grading shift, and $\F[U]$--module structure as in Theorem \ref{dsurgthm}.
  \end{corollary}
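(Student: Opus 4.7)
The plan is to realize $M(K_{D_k,n})$ as $+2n$--framed surgery on a knot of the form required by Theorem~\ref{dsurgthm} and then verify that theorem's numerical hypotheses for our specific family. From the surgery description of the branched double cover recalled before Proposition~\ref{orderh1} (using that $D$, hence $D_k$, is reversible), we have
$$M(K_{D_k,n}) \;=\; S^3_{-2n,\,2n}(-2D_k,\, 2D_k).$$
Performing the surgeries in sequence, the $-2n$--surgery on the first component produces the intermediate manifold $Y = S^3_{-2n}(-2D_k)$; since the original surgery components were Hopf linked before being tied in local knots, the second component descends in $Y$ to the knot $\mu \# 2D_k$, where $\mu$ denotes the meridian of $-2D_k$ inside $Y$. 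The remaining $+2n$--surgery on $\mu \# 2D_k$ recovers $M(K_{D_k,n})$, so we are exactly in the setting of Theorem~\ref{dsurgthm} with $K_1 = -2D_k$, $K_2' = 2D_k$, and $N = 2n$.

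To apply the theorem we must check that $N \ge \max\bigl(2g(K_2')+2,\; 2g(K_1)\bigr)$. Since $D$ is the untwisted Whitehead double of a nontrivial knot, its Seifert genus is $1$; additivity of Seifert genus under connected sums then gives $g(D_k) = k$ and hence $g(\pm 2 D_k) = 2k$. The Corollary's hypothesis $0 \le k < n/2$ is equivalent to $2k + 1 \le n$, i.e.\ $4k + 2 \le 2n$, so
$$N \;=\; 2n \;\ge\; 4k + 2 \;=\; 2g(K_2') + 2 \;\ge\; 2g(K_1) \;=\; 4k.$$
With both inequalities verified, Theorem~\ref{dsurgthm} produces the enumeration $\{\spinc_m\}_{-2n^2 \le m \le 2n^2}$ of $\mathrm{Spin}^c$ structures on $M(K_{D_k,n}) = Y_{2n}(\mu \# 2D_k)$ together with the claimed isomorphism of $\F[U]$--modules and the inherited filtration/grading data, including the fact that the shift $\epsilon$ depends only on $m$ and $N$ (and not on $K_2'$).

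There is essentially no obstacle here beyond the bookkeeping: the Corollary is a direct specialization of Theorem~\ref{dsurgthm}, and the one substantive input is the genus estimate $g(\pm 2 D_k) = 2k$, which combined with the constraint $k < n/2$ delivers precisely the inequality needed to enter the large-surgery regime of the theorem.
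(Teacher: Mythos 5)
Your proof is correct and follows the same route as the paper: identify $M(K_{D_k,n})$ as $+2n$--surgery on $\mu\#2D_k\subset S^3_{-2n}(-2D_k)$, compute $g(\pm 2D_k)=2k$ via additivity, and observe that $0\le k<n/2$ gives $2n\ge 4k+2=\max(2g(2D_k)+2,\,2g(-2D_k))$, which is precisely the large-surgery hypothesis of Theorem~\ref{dsurgthm}. The extra remarks about the Hopf-linked surgery description and the intermediate manifold are sound elaborations of what the paper takes for granted, and nothing in the argument diverges from the paper's own reasoning.
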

  \begin{proof}  $M(K_{D_{k},n})$ is obtained by $2n$--surgery on $\mu\#2D_k\subset S^3_{-2n}(-2D_{k})$.  Thus we need only verify that $2n\ge \max (2g(2D_k)+2, 2g(-2D_k))$, provided that $0\le k< n/2$.  Both $2D_k$ and $-2D_k$ have genus $2k$, being the connected sum of $2k$ copies of the Whitehead double, a genus one knot.  
  \end{proof}
%\end{corollary}

The rest of this section is devoted to proving Theorem~\ref{dsurgthm}.

%%%%SUBSECTION %%%%%%%%
\subsection{Heegaard diagrams,  \Sp\ structures, homology and surgery.}

Our computation of $HF^+(M,\spinc)$ relies on results of~\cite{os4},  in which the general problem of computing the Heegaard Floer homology of rational surgery on a knot in a rational homology sphere is studied.  Although the manifolds we consider are in some respects fairly simple, in order to apply~\cite{os4} it is essential to review some of the foundations. 

The manifold $M$ we are considering is formed by surgery on a link $(K',K) \subset S^3$ constructed from the Hopf link by placing local knots in each component. More specifically, $M$ is given by $-N$ surgery on $K'$ followed by $N$ surgery on $K$.   Thus, our  approach to computing the Heegaard Floer  homology of $M$ is to view it  as formed by performing $N$ surgery on  knot $K$, viewed as a knot in  $Y = S^3_{-N}(K')$.  We  begin by considering surgery on the Hopf link, in which case $Y = S^3_{-N}(U) = -L(N,1)$ and $M = L(N^2+1, N)$.  We then move to the more general case, encompassing the situation in which  the components are knotted.
 
%%%%SUBSECTION %%%%%%%% 
\subsection{Lens space Heegaard diagram.} \label{Heegaard} As a starting point, we consider lens spaces $-L(N,1)$.    On the left in Figure~\ref{figureheegaard} is a doubly pointed Heegaard diagram for $Y =- L(2,1)$, which we use to illustrate the general construction. 

\begin{figure}[ht]
\fig{.5}{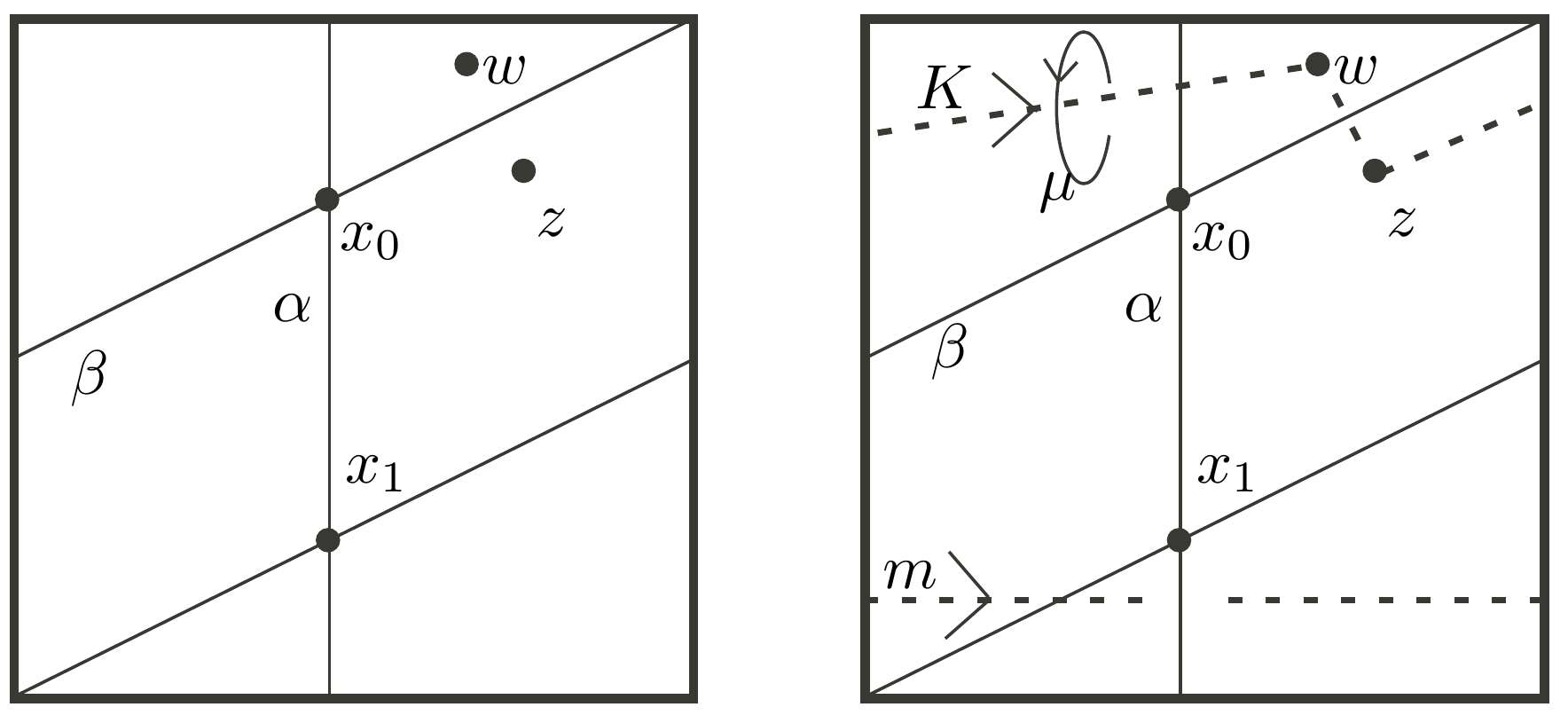}
\caption{Doubly pointed Heegaard diagram}\label{figureheegaard}
\end{figure}

In the lens space, the surface $\Sigma = T^2$  represented by this diagram bounds solid tori $U_\alpha$ and $U_\beta$ in which the curves $\alpha$ and $\beta$ bound embedded disks, respectively.  If we let  $\eta_\alpha$ be an arc from $w$  to $z$ on $\Sigma$ missing $\alpha$  that is pushed into $U_\alpha$ (except at its endpoints) and  let $\eta_\beta$ be an arc from $z$ to $w$ on $\Sigma$ missing $\beta$ pushed into $U_{\beta}$, the union of $\eta_\alpha$ and $\eta_{\beta}$ forms an oriented  knot $ {K}$ in $Y$.  Notice that  once isotoped into $U_\alpha$, $K$ represents the core of $U_{\alpha}$.

The meridian to ${K}$ we denote $\mu$.  The  complement of  ${K}$ in $U_\alpha$ is homeomorphic to  $T^2 \times I$ with $H_1(U_\alpha \setminus {K}) $ generated by $\mu$ and the curve $m$ illustrated on the right in Figure~\ref{figureheegaard}.  Notice that $H_1(Y \setminus  {K} )$ is generated by $\mu$ and $m$, subject to the relations $ Nm-\mu = 0$.  
 This is shown on the left in  Figure~\ref{figure2}, which illustrates the solid torus $U_\alpha$. Note that in the figure $K$ has not yet been isotoped into $U_\alpha$.

\begin{figure}[ht]
\fig{.45}{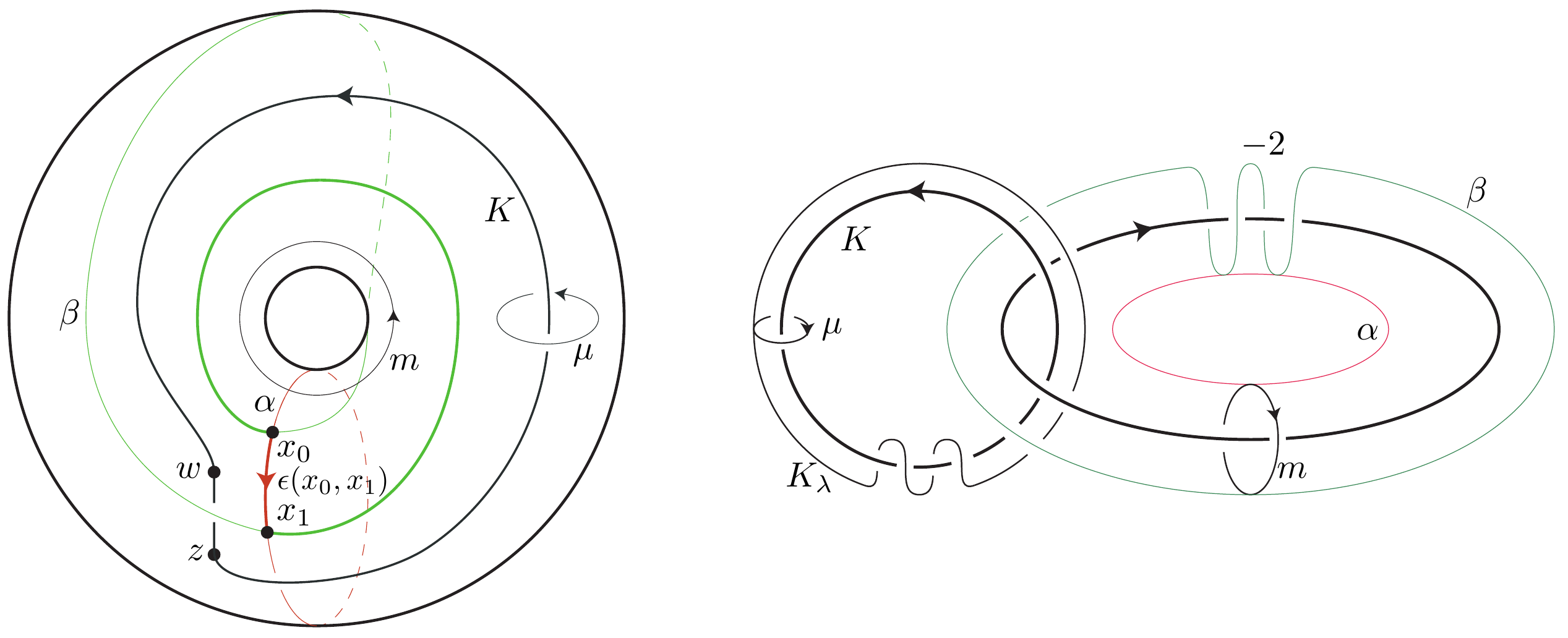} 
\caption{Surgery diagram of lens space $L(5,2)$}\label{figure2}
\end{figure}

%%%%SUBSECTION %%%%%%%%
\subsection{Relative Spin$^c$ structures.} Associated to each intersection point, $x_0$ or $x_1$ in the figures and   $\{x_0, x_1, \ldots, x_{N-1}\}$ for general $-L(N,1)$, there is a relative $\SpinC$ structure $
\spin_{w,z}(x_i) \in \SpinC(Y,{K}$).  The differences between these  satisfy \begin{equation}\label{spincdiff} 
\spin_{w,z}(x_{i+1})-
\spin_{w,z}(x_i) = \text{PD}[\epsilon(x_i,x_{i+1})]\in H^2(Y\setminus \nu K, \partial)\cong H^2(Y,K)\end{equation}
where $$\epsilon(x_i,x_{i+1}) \in \frac{H_1(\Sigma\setminus\{z,w\})}{\text{Span}\ \vec\alpha+\text{Span}\ \vec\beta}\cong H_1(Y\setminus \nu K)$$  is the class represented by a path that travels from $x_i$ to $x_{i+1}$ along $\alpha$ and then from $x_{i+1}$ to $x_i$ along $\beta.$ As seen from the figure, this curve is isotopic to $m$ in $Y \setminus\nu{K}$.  (In all these equations, $i \in \Z / N \Z$.)

%More generally, there is a relative $\SpinC$ structure  $\spin_{w,z}(\x) \in \SpinC(Y,{K})$  associated to a Heegaard Floer generator coming from a doubly-pointed Heegaard diagram adapted to a knot in an arbitrary $3$--manifold.    Equation (\ref{spincdiff})  holds in this context, with $\epsilon(\x,\y)\in H_1(Y\nu K)$ represented by a collection of paths along $\alpha$ and $\beta$ curves connecting the intersection points comprising $\x$ and $\y$.  See Section \cite[Section 2.2]{os4}   for more details.

 There is a natural map, called the {\it filling map}, $G_{Y,{K}}\!\co \SpinC(Y,{K}) \to \SpinC(Y)$ which satisfies $$G_{Y,{K}}(\xi +k ) -  G_{Y,{K}}(\xi   ) = \iota(k),$$ where $k \in H^2(Y,K)$. If ${K}^r$ denotes the orientation reverse of $ {K}$, then
$$G_{Y,{K}}(\xi   ) -  G_{Y,{K}^r}(\xi   ) = -\text{PD}[ {K} ].$$

\vskip.1in
\noindent{\bf Comment.}  As described by Turaev~\cite{turaev}, $\SpinC$ structures on a closed manifold correspond to equivalence classes of nonvanishing vector fields, where two are equivalent if homotopic off a ball.  In the case that $K \subset Y$ is an oriented  knot, a relative $\SpinC$   structure corresponds to a nonvanishing vector field on $Y \setminus \nbdk$ which points outwards on the boundary.  The map $G$ is given in terms of a canonical extension of a vector field from $Y\setminus\nbdk $ to $Y$.  See~\cite[Section 2.2]{os4} for a further discussion.
\vskip.1in

%%%%SUBSECTION %%%%%%%%
\subsection{$Y_N(K)$.}We are interested in performing $N$ surgery on $ {K}$.  To be clear about framings, in Figure~\ref{figure2} a push-off of ${K}$, $K_\lambda$, is illustrated.  The surgered manifold, $Y_N({K})$ is built by  removing a neighborhood of ${K}$ and replacing it with a solid torus so that $K_{\lambda}$ bounds a meridianal disk in that solid torus.     Note that $H_1(Y_N(K))$ is generated by $\mu $ and $m$ subject to the relations $Nm - \mu = 0$ and $m +N \mu = 0$.  For instance in the illustrated  case, with $N = 2$,   we get $H_1(Y_N(K)) = \Z / 5\Z$. (In fact, $Y_2(K) = L(5,2).$)   In general, for $N$ surgery on ${K}$ in $-L(N,1)$ we end up with $L(N^2+1, N) = -L(N^2 + 1, N)$.

 %%%%%%%SUBSECTION%%%%%%%%%%%%%%
 \subsection{The structure of $CFK^\infty(Y,K)$}
  
A relative $\SpinC$ structure $\xi\in \SpinC(Y,K)$ has  an associated doubly filtered chain complex  $CFK^\infty(Y,{K},\xi)$  generated by triples $[\x,i,j]$ satisfying 
\begin{equation}\label{triple}\spin_{w,z}(\x) + (i-j) \cdot \text{PD} [\mu] = \xi.\end{equation}
Here, $\x\in \Ta\cap\Tb$ is an intersection point of the Lagrangian tori in the symmetric product of a Heegaard diagram $(\Sigma,\alphas,\betas,z,w)$, and $i,j\in \Z$.
For instance, in the  case of $Y$ a lens space as above, we have illustrated examples in Figure~\ref{figurecomplexes}. In the figure, $x$ can denote any of the $x_i$ coming from the Heegaard diagram in Section \ref{Heegaard}.  The value of $\xi$ is written beneath each of the complexes.  (The shading in these diagrams becomes relevant later.)

 \begin{figure}[ht]
 \fig{.5}{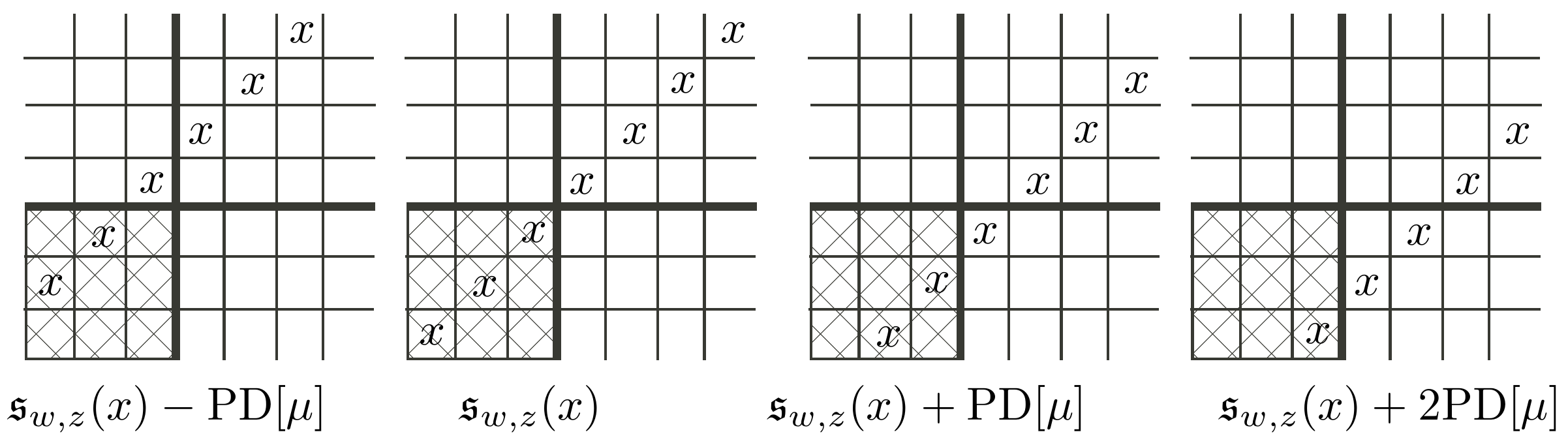} 
 \caption{$CFK^\infty(Y,K) $}\label{figurecomplexes}
\end{figure}

Every relative Spin$^c$ structure is  of the form $\spinc_{w,z}(x_0) + kPD[m]$ for  some $k \in \Z$, and this provides a correspondence between   Spin$^c(Y, K)$ and $\Z$.  Since $\mu = Nm$, the set of relative Spin$^c$ structures associated to each $x_i$ is a coset  of $N\Z \subset \Z$.   Also, since $\spinc_{w,z}(x_i) - \spinc_{w,z}(x_0)  = iPD[m]$, for different $x_i$ the sets are distinct cosets.

 %%%%%%%SUBSECTION%%%%%%%%%%%%%%
\subsection{Enumerating relative Spin$^c$ structures for the manifolds at hand}

We now move to our particular setting, in which $Y=S^3_{-N}(K')$ is a manifold constructed as surgery on a knot $K'\subset S^3$ and $K \subset Y$ is a knot of the form  $\mu\#J$, where $\mu$ is the meridian of $K'$  and  $J\subset S^3$.  Relative $\SpinC$ structures will play a central role in the surgery formula which  will be used to compute the Floer homology of $Y_N(K)$.  We discuss them now.

The manifold $Y_N(K)$ contains a knot, which we also denote $K$, induced by the surgery: $K$ is simply the core of the solid torus. There are two surjective filling maps to consider:
 $$G_{Y_N(K),{K}}  \co \SpinC(Y_N(K),{K} ) \to \SpinC(Y_N(K))$$   
 and
 $$G_{Y,K} \co \SpinC(Y,K) \to \SpinC(Y).$$
\noindent  There is a  canonical diffeomorphism  $$Y_N(K) \setminus\nbdk \to  Y \setminus\nbdk$$ that provides an identification between $\SpinC(Y_N(K),K)$ and $\SpinC(Y,K)$, with which we subsequently conflate elements in the two sets.  We will primarily think in terms of  $\SpinC(Y,K)$,  and the important point will be to understand the images of this $H^2(Y,K)$--torsor under $G_{Y_N(K),{K}}$ and $G_{Y,K}$.

Since $H^2(Y,K) \cong \zz$, we can (non-canonically) pick an affine isomorphism which enumerates the relative $\SpinC$ structures  on $Y\setminus \nbdk$ by integers (or elements in $H^2(Y,K)$).   For our purposes, it will be most convenient to pick an enumeration  that is compatible with the previously established affine isomorphism  $\SpinC(Y)\cong \Z/N\Z\cong H^2(Y)$  implicit in the statement of the refiltering theorem;  that is, we first enumerate elements in $\SpinC(S^3_{-N}(K'),\mu)$ to be compatible with our enumeration of $\SpinC$  structures on $S^3_{-N}(K')$, and we then use this to induce an enumeration  for $\SpinC(S^3_{-N}(K'),\mu\#J)$.  

To make this precise, recall that the refiltering theorem determines the $\Z\oplus\Z$--filtered homotopy type of  $\CF^\infty(S^3_{-N}(K'),\mu,\spinc_m)$, where $\spinc_m$ indicates a specific {\em absolute} $\SpinC$ structure on    $S^3_{-N}(K')=Y$ and $m\in \Z/N\Z$.  As in the last section, associated to a {\em relative} $\SpinC$ structure   $\xi\in \SpinC(Y,\mu)$, we obtain a complex $\CFK^\infty(Y,\mu,\xi)$ generated by triples satisfying (\ref{triple}).  We  pick an identification of $\SpinC(Y,\mu)$ with  $\Z$ so that the $m$-th relative $\SpinC$ structure, which we hereafter denote $\underline{\spinct}_m\in \SpinC(Y,\mu)$, or occasionally  by $m\in \Z$, has infinity complex given by the refiltering theorem; that is, 
$$ \CFK^\infty(Y ,\mu,\underline{\spinct}_m) = \CFK^\infty(Y,\mu,\spinc_m),  \ \ \mathrm{for} \ \lceil (-N+1)/2 \rceil \le m \le \lfloor N/2
  \rfloor,$$
where on the left we have the infinity complex associated to a relative $\SpinC$ structure and on the right the  filtered complex associated to the absolute $\SpinC$ structure labeled $\spinc_m$ by the refiltering theorem.  Equation (\ref{triple}) then determines the infinity complex for the remaining  relative $\SpinC$ structures (outside the interval of the theorem) by the equation:
$$ CFK^\infty(Y ,\mu,\underline{\spinct}_{m+kN})=CFK^\infty(Y ,\mu,\underline{\spinct}_m)\{0,-k\},$$
where $\{0,-k\}$ indicates that we have shifted the $j$-filtration down by $k$.   Finally, we observe that having picked an affine isomorphism  $\SpinC(Y,\mu)\cong H^2(Y,\mu)\cong\Z$, we subsequently obtain an affine isomorphism $\SpinC(Y,\mu\#J)\cong H^2(Y,\mu\#J)\cong\Z$, via the natural isomorphism $H^2(Y,\mu)\cong H^2(Y,\mu\#J)$.   % This labeling has the advantage of being the ``correct" labeling, in the sense that it agrees with a $\Z$-labeling of relative $\SpinC$ structures by Chern classes.  This latter labeling is defined by declaring $\underline{\spinct}_i$ to be the unique relative $\SpinC$ structure satisfying $$i = \frac{1}{2}\langle c_1(\underline{\spinct}_i)-PD([\mu]), [F,\partial F]\rangle.$$ Here, $[F,\partial F]\in H_2(Y \setminus K, \partial)\cong \Z$ is a generator, and $c_1(\underline{\spinct}_i)\in H^2(Y \setminus K, \partial)$ is the relative Chern class of the relative $\SpinC$ structure, defined with respect to a trivialization of the two-plane field dual to the defining vector field along the boundary (see \cite[Page 627]{linksOS} or \cite{NiLink} for more details).  

With our convention in hand, we  hereafter regard relative $\SpinC$ structures as integers, or as elements in $H^2(Y,K)$.  Similarly, we regard absolute $\SpinC$ structures on $Y$ or $Y_N(K)$ as elements in  $H^2(Y)\cong \Z/N\Z$ or $H^2(Y_N(K)\cong\Z/(N^2+1)\Z$, respectively.  Our convention is compatible with the filling maps, in the sense that they are now identified with the corresponding restriction maps on cohomology:
$$ H^2(Y_N(K),K)\to H^2(Y_N(K))$$
and
$$ H^2(Y,K)\to H^2(Y).$$

To illustrate these principles, and for use in the next section, let $\spinc$ be some fixed $\SpinC$ structure on $Y_N(K)$.  Now define $S(\spinc) = G_{Y_N(K),K}^{-1}(\spinc)$.  We have that $S(\spinc) = \{\underline{\spinct}_{k + (N^2+1)j}\}$ for $j \in \zz$ and some $k$, $0\le k \le N^2$.  Moreover, for each fixed value of $k$, there exists an $\spinc \in  \SpinC(Y_N(K))$ such that $\underline{\spinct}_k \in S(\spinc)$.

%%%%SUBSECTION %%%%%%%%
\subsection{The mapping cone} \label{mappingconesection}
$HF^+(Y_N(K), \spinc)$ can be computed as the homology of  a mapping cone complex built from $CFK^\infty(Y,K)$ via a construction  of \ons \ which we now recall.  We use the notation of  \cite{os3,os4} and refer the reader there for more details.

Letting $S=S(\spinc)$ be as above, there are complexes
$$\A_\spinc^+(Y,K) = \oplus_{{\xi} \in  S}   A_{{\xi}}^+(Y, K),$$
$$\B_\spin^+(Y,{K}) = \oplus_{ {\xi} \in S } B_{{\xi}}^+(Y, {K}).$$
Here  $$ A_{{\xi}}^+(Y, {K})= CFK^\infty(Y,K,{\xi})_{ \{ \text{max}(i,j) \ge 0\} },$$ and   
$$ B_{{\xi}}^+(Y, {K})= CF^+ (Y, G_{Y, K}({\xi})).$$ We can write    $$CF^+ (Y, G_{Y, K}({\xi})) = CFK^\infty(Y,K,{\xi})_{\{i \ge 0\}},$$ where in the term on the right of the equality, $K$ has provided a filtration of $B_{{\xi}}^+(Y,K)$. 

There are maps: 
$$v_\xi^+\co A_\xi^+(Y, {K}) \to B_\xi^+(Y, {K})$$
and 
$$h_\xi^+\co A_\xi^+(Y, {K}) \to B_{\xi+\text{PD}[K_\lambda]}^+(Y, {K}).$$  

The map $v$ is given by the projection map onto the quotient complex of $A_\xi^+(Y, {K})$ consisting of triples $[\x,i,j]$ with $i\ge 0$, the so-called vertical complex. 
 The map $h$ is  more subtle.   
Interchanging the roles of $i$ and $j$ replaces $K$ with $K^r$, its reverse.  The associated filling map for $K^r$ is denoted $G_{Y,K^r}$.  Because of the string reversal, $G_{Y,K^r}({\xi}) = G_{Y,K}({\xi}) +PD(K)$.   Thus, if we simply take the quotient corresponding to the horizontal projection, the target of this chain map is a complex homotopy equivalent to $\CF^+(Y, G_{Y,K}({\xi}) +PD(K))$. The map  $h_{\xi}^+$ is given by horizontal projection, followed by this chain homotopy equivalence. 

We now want to consider the set $S(\spinc)$ in terms of  $\SpinC$ structures on $Y$. To do so we write $$  S(\spinc) = \{ \underline{\spinct}_{k + (N^2+1)j} \}_{j\in \zz}$$ for some fixed $k$ satisfying $0 \le k \le N^2$. This set can be partitioned according to its $N$ possible images in $\SpinC(Y)$ under the filling map $G_{Y,K}$.  Let $0 \le l \le N-1$.  Then $S$ can be written as 
$$\bigcup_{0\le l \le N-1}\left( \{\underline{\spinct}_{ [j(N^2 +1) +(l-k)N]N   + l                } \}_{j \in \zz} \right).$$ Recalling that $\mu = Nm$, this can be rewritten as
$$\bigcup_{0\le l \le N-1} \left(  \{\underline{\spinct}_{  l  } +  [j(N^2 +1) +(l-k)N]PD(\mu)   \}_{j \in \zz}    \right).$$
Deriving the following formula is rather delicate, but its validity is easily checked:  
$$l + [j(N^2 +1) +(l-k)N]N = l \mod N$$ and$$l + [j(N^2 +1) +(l-k)N]N = k \mod N^2+1.$$

 %%%%%%%SUBSECTION%%%%%%%%%%%%%%
\subsection{Reduction to a finite complex}
From this discussion it is apparent that, in general, the mapping cone complex is fairly complicated.  In this subsection we observe that it always reduces to a complex that is a  quotient of a finite dimensional complex over $\ff[U, U^{-1}]$.  In the next subsection we observe that in our special case  the complex reduces to a single $A_{{\xi}}$ term.

Consider the complexes ${\A} = \oplus A_i$ and ${\B} = \oplus B_i$, joined by the chain map $D$ as illustrated below.  We denote the mapping cone complex of $D$ by $\C$.  Since $CFK^\infty$ is finitely generated over $\ff[U,U^{-1}]$, it follows that $v\co A_i \to B_i$ is an isomorphism for all large $i$, and $h\co A_i \to B_{i+1}$ is an isomorphism as $i$ goes to negative infinity.  The diagram below presents a special case. 
\vskip.2in
\begin{center}
$\cdots
\begin{diagram}
\node{A_{-3}} \arrow{se,l}{\hskip-.07in\cong} \node{A_{-2}} \arrow{s}\arrow{s,r}{v}  \arrow{se,l}{\hskip-.07in\cong} \node{ {  {A_{-1}}}    } \arrow{s,r}{v}    \arrow{se,l}{h} \node{A_{0}  }   \arrow{s,r}{v}  \arrow{se,l}{h} \node{   A_{1} } \arrow{s,r}{\hskip-.02in\cong} \arrow{se,l}{h} \node{A_{2}} \arrow{s,r}{\hskip-.02in\cong} \arrow{se,l}{h} \node{}  \\
\node{} \node{B_{-2}}   \node{ {  {B_{-1}}}    }   \node{B_{0}  }   \node{   B_{1} }    \node{B_{2}} \node{B_{3}}
\end{diagram}\cdots
$
\end{center}

\noindent In this example, we have the following subcomplex, ${\C}'={\A}' \oplus {\B}'$:\vskip.2in

\begin{center}
$\cdots 
\begin{diagram}
\node{A_{-3}} \arrow{se,l}{\hskip-.07in\cong} \node{A_{-2}} \arrow{s}\arrow{s}  \arrow{se,l}{\hskip-.07in\cong} \node{}  \node{   } \node{   A_{1} } \arrow{s,r}{\hskip-.02in\cong}\arrow{se} \node{A_{2}} \arrow{s,r}{\hskip-.02in\cong}\arrow{se}  \node{} \\
\node{} \node{B_{-2}}   \node{ {  {B_{-1}}}    }   \node{   }   \node{   B_{1} }    \node{B_{2}} \node{B_{3}}
\end{diagram} \cdots
$
\end{center}

The restriction of $D$ to this subcomplex, which we denote $D'$,  induces an isomorphism $D'_*\co H_*(\A')\to H_*(\B')$.  Injectivity is evident; surjectivity follows from the fact  that for each $x$ in  the right portion of the complex, $(h\circ v^{-1})^k (x) = 0$  for some   $k$. Similarly, for each $x$ in the left portion of the complex, $(v\circ h^{-1})^k(x) = 0 $ for some   $k$. There is a long exact sequence $$\to H_*({\B'}) \to H_*({\C'}) \to H_*({\A'})\to $$ with connecting homomorphism given by $D'$. Thus, $H_*({\bf \C'}) = 0$.

Consider next the short exact sequence $0\to {\C'} \to {\C} \to {\C/\C'}\to 0$; it leads to a long exact sequence, and we see that $H_*({\C}/{\C'}) = H_*({\C})$.  That is, the homology of ${\C}$ is the homology of the complex 

\begin{center}
$ 
\begin{diagram}
\node{} \node{} \node{ {  {A_{-1}}}    }   \arrow{se}  \node{A_{0}  }  \arrow{s}  \node{   }   \node{ } \node{}\\
\node{} \node{ }   \node{    }   \node{B_{0}  }   \node{ }    \node{} \node{}
\end{diagram}
$
\end{center}

Notice that had $h\co A_{-1} \to B_0$ also been an isomorphism in this example, then the complex would have reduced to a single term, $A_0$.  This occurs in the cases of lens spaces that arise in our work, $L(N^2+1, N)$.  We will see in the next section that this total collapse also occurs for our manifolds $M$.

%%%%%%%SUBSECTION%%%%%%%%%%%%%%
\subsection{General complete collapse of the mapping cone complex}  In the case of lens spaces constructed as surgery on the unknot, the $\CFK^\infty$ complexes which arise are all of the form $(C \otimes_\ff \F[U,U^{-1}])\{0,k_i\}$, where $C$ is a 1--dimensional doubly filtered $\ff$ vector space  generated by a single vector $x_i$ at filtration level $(0,0)$.  The shift $\{0,k_i\}$ is a   $j$--filtering shift of $k_i$ for appropriate integers $k_i$.   It thus follows quickly that there is an $a$ such that $v_i$ is an isomorphism for all $i \ge a$ and $h_i$ is an isomorphism for all $i \le a-1$.  This explains our comment above that for lens spaces there is a complete collapse of the $(\A,\B)$ mapping cone complex to a single $A_i$.

In the more general situation that appears for our $M$, the $\CFK^\infty$  complexes which arise are of the form $(C_{\bar{\imath}} \otimes_\ff \F[U,U^{-1}])\{0,k_i\}$ for finite dimensional doubly filtered $\F$-chain complexes $C_{\bar{\imath}}$ which are no longer 1--dimensional (here $\bar{\imath}=i\mod n$ for some $n$). In particular, the $\CFK^\infty$ complexes are not restricted to a single diagonal.  Instead, they lie in a band; in Figure~\ref{bandfig} we illustrate a case in which the band is of height six.  

Notice that in the example illustrated in Figure~\ref{bandfig}, the vertical quotient is not an isomorphism, but the horizontal quotient is.  In general, one of the two maps will be an isomorphism unless the origin is contained in the band.  Furthermore, if this band is shifted up (by $-2$ or more) then $h$ continues to be an isomorphism, and if it is shifted down by seven or more, the vertical map becomes an isomorphism.  
 
\begin{figure}[h]
\fig{.5}{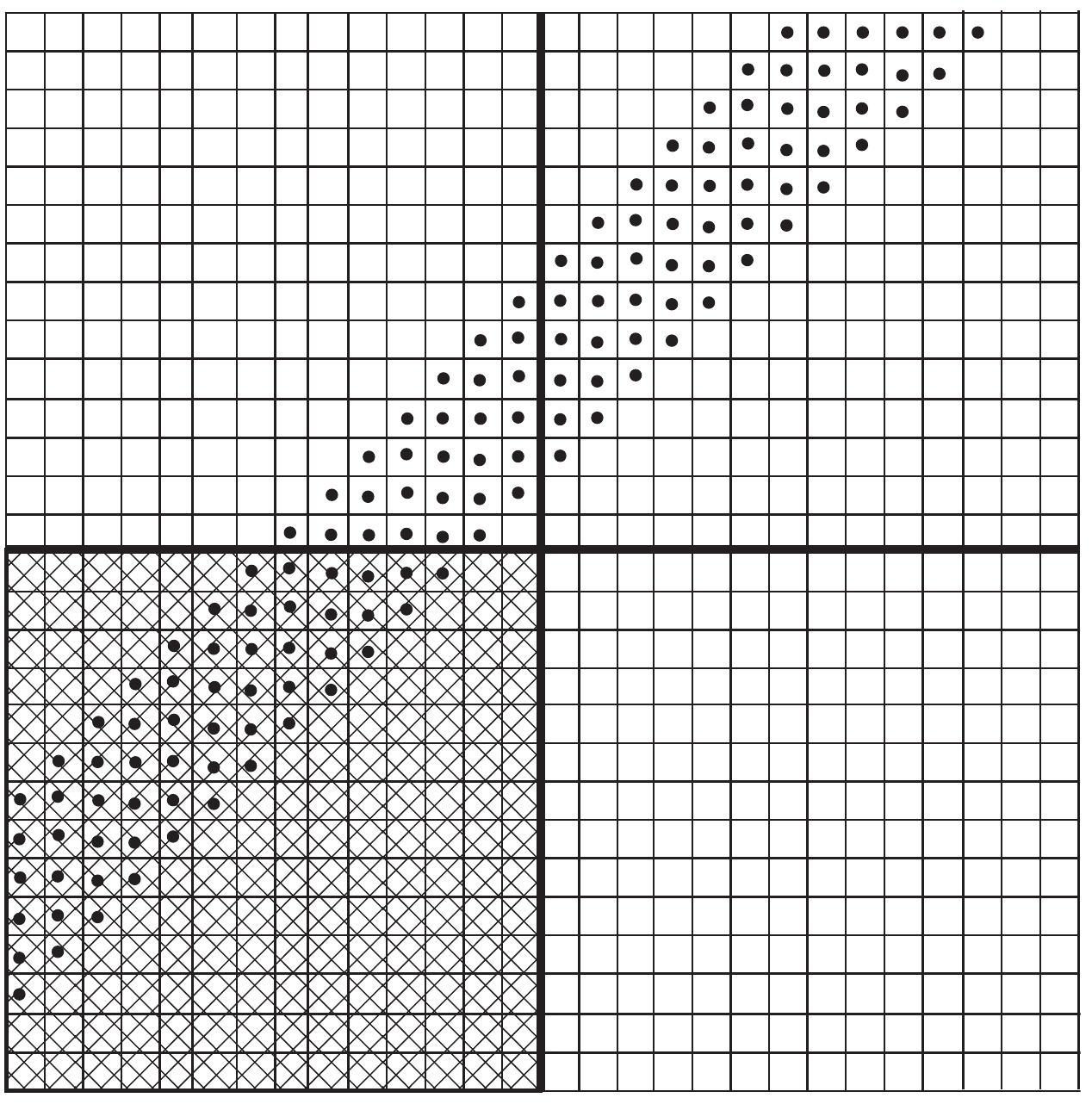} 
\caption{}\label{bandfig}
\end{figure}

Recall now that in our decomposition $\A^+_{{\spinc}}(Y,K) = \oplus_{  {\xi} \in S}\A^+_{{\xi}}(Y,K)$ we have 
$$S=\bigcup_{0\le l \le N-1}\left( \{\underline{\spinct}_{  l  } +  [j(N^2 +1) +(l-k)N]PD(\mu)   \}_{j \in \zz} \right).$$

In order to state the next result, let the  width $\Wd(C)$ of a doubly filtered complex be defined as: $\Wd(C) = \max (i-j) -\min(i-j) +1$, where the minimum and maximum are taken over all pairs $(i,j)$ such that there is a nontrivial filtered generator of filtered degree $(i,j)$.  Roughly,  $\Wd(C)$ represents the width of the narrowest $U$--invariant band which contains the full complex.  The width determines the Thurston norm of the knot complement \cite{NiLink, GenusBounds}. 

\begin{theorem}[\bf Collapse Theorem] \label{collapsethm}Suppose that for each $\underline{\spinct}_l \in \SpinC(Y,K)$ with $0 \le l \le N-1$, the complex $C = CFK^\infty(Y,K, \underline{\spinct}_l )$ satisfies $\Wd(C) \le N$.  Then the mapping cone complex $\A \to \B$  that determines $HF^+(Y_N(K),\spinc)$ collapses  to a single $A_i$ for some $i$.
\end{theorem}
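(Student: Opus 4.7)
The plan is to analyze how the complexes $A_\xi$ in the mapping cone $\A \to \B$ depend on their relative spin$^c$ structure $\xi$, and then to show that the width hypothesis forces all but one $A_\xi$ to cancel via Morse-style reductions, as sketched in the preceding subsection.

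First, I partition $S = S(\spinc)$ by the filling map: $S = \bigsqcup_{l=0}^{N-1} S_l$ with $S_l = S \cap G_{Y,K}^{-1}(\spinc_l)$. From the explicit description in the preceding subsection, consecutive elements of $S_l$ differ by $N(N^2+1)$ in the integer parametrization of $\SpinC(Y, K)$, i.e.\ by $(N^2+1)$ multiples of $\mathrm{PD}[\mu]$. Using the identity
\[
CFK^\infty(Y, K, \underline{\spinct}_{l + kN}) \;\cong\; CFK^\infty(Y, K, \underline{\spinct}_l)\{0, -k\}
\]
compatible with our enumeration (arising from the refiltering theorem), each $A_\xi$ for $\xi \in S_l$ is identified with a $j$-filtration shift of the base complex $C_l := CFK^\infty(Y, K, \underline{\spinct}_l)$, and these shifts form an arithmetic progression of common difference $N^2+1$.

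Second, I characterize the isomorphism regimes of $v_\xi$ and $h_\xi$. The projection $v_\xi\co A_\xi^+ \twoheadrightarrow B_\xi^+$ from $CFK^\infty_{\{\max(i,j) \ge 0\}}$ onto $CFK^\infty_{\{i \ge 0\}}$ has kernel supported in $\{i < 0,\, j \ge 0\}$; this kernel vanishes iff, by $U$-periodicity, every occupied diagonal $i - j$ of the (shifted) complex satisfies $i - j \ge 0$. Symmetrically $h_\xi$ is an iso iff every occupied diagonal satisfies $i - j \le -1$. Writing $[d_{\min,l}, d_{\max,l}]$ for the range of occupied diagonals of $C_l$, a $j$-shift by $-k$ translates diagonals by $+k$, so $v_\xi$ is iso for $k \ge -d_{\min,l}$ and $h_\xi$ is iso for $k \le -d_{\max,l} - 1$. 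The ``bad'' interval where neither is iso has length $d_{\max,l} - d_{\min,l} = \Wd(C_l) - 1 \le N - 1$. Since consecutive shifts within each $S_l$ are spaced by $N^2+1 > N - 1$, at most one $\xi \in S_l$ falls in the bad interval, for a total of at most $N$ bad $\xi$'s in $S$; every other $A_\xi$ admits either a $v$- or $h$-cancellation.

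I then execute the cancellation by extending the $\C' = \A' \oplus \B' \subset \C$ construction of the preceding subsection: for each good $\xi$ I include $A_\xi$ in $\A'$ along with the $B$ to which it is iso'd (namely $B_\xi$ via $v_\xi$, or $B_{\xi + \mathrm{PD}[K_\lambda]}$ via $h_\xi$) in $\B'$, verify $\C'$ is acyclic, and observe that $\C/\C'$ is supported on the at most $N$ bad $A$'s plus a few residual $B$'s. A further round of Morse cancellations in the residual --- made possible by the $N^2+1$ spacing of bad $\xi$'s coming from different $S_l$'s, which controls the induced differentials on $\C/\C'$ --- collapses the residual down to a single $A_{\xi_*}$, yielding the claim.

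The main obstacle is making this last step rigorous. Each $B_\xi$ can appear in at most one cancellation, yet in general receives both a $v_\xi$ (from $A_\xi$) and an $h_{\xi - \mathrm{PD}[K_\lambda]}$ (from the preceding element of $S$); the choice of which cancellation to use must be made consistently across each $v$-to-$h$ regime transition, within each $S_l$ and globally across the cyclic sequence of residues $l$. This parity issue is already delicate in the pure lens-space case ($\Wd(C_l) = 1$, so no $\xi$ is individually bad yet the cone still collapses to a single $A$), and propagating the same bookkeeping --- with small modifications around each bad $\xi$ to absorb the residual $B$'s --- is what drives the general argument.
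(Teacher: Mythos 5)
Your setup of the mapping cone and its indexing by relative $\SpinC$ structures is correct, as is the analysis of when $v_\xi$ and $h_\xi$ are isomorphisms in terms of the occupied diagonals of a shifted base complex. The difficulty is that the bound you derive --- at most one bad $\xi$ per fiber $S_l$ of the filling map, hence at most $N$ bad $\xi$ in total --- is far too weak, and bridging from there to a single $A_i$ is not a bookkeeping nuisance but the entire content of the theorem. After excising the acyclic $\C'$, the residual quotient could a priori contain up to $N$ bad $A$'s with intervening $B$'s; the ``further round of Morse cancellations'' you invoke to finish is exactly what remains to be proved, and you say as much yourself. As written, this is a genuine gap.

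The fix, and the paper's actual argument, is not to partition $S=S(\spinc)$ into the fibers $S_l$ and analyze each separately, but to traverse $S$ in the $\Z$-order inherited from the mapping-cone index. Passing from $A_i$ to $A_{i+1}$ advances the integer parametrization of $\SpinC(Y,K)$ by $N^2+1$; since $N^2+1\equiv 1\pmod N$, this advances the residue $l$ by one (cyclically), and writing the integer as $l + kN$ so that $A_i$ is built from $C_l$ with $j$-filtration shifted by $-k$, one checks that a step of $N^2+1$ changes $k$ by exactly $N$, or by $N+1$ at the wrap-around $l=N-1\to l=0$. Thus as the mapping-cone index advances by one, the $j$-shift moves monotonically by at least $N$ --- a quantity that is at least as large as every $\Wd(C_l)$. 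Consequently at most one $A_i$ can have its band of occupied diagonals straddle the threshold between the $v$-iso and $h$-iso regimes; for all larger $i$ the map $v_i$ is an isomorphism, and for all smaller $i$ the map $h_i$ is. The reduction of the preceding subsection then terminates with a single residual $A_{i^*}$, with nothing left to absorb and no parity issue to resolve.

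In short, the key idea you are missing is that the shifts must be compared across \emph{consecutive} mapping-cone indices, which interleaves the fibers $S_l$; it is the step of $N$ (or $N+1$) between consecutive indices, not the step of $N^2+1$ within a fiber, that is matched against the hypothesis $\Wd(C)\le N$. The within-fiber spacing you computed is real but does not produce the collapse: it only bounds the bad $\xi$'s by $N$ rather than by $1$, and that is the wrong bound.
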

 
 \begin{proof} Recall that as in Section~\ref{mappingconesection}, $k$ is a specified fixed integer, $0 \le k \le N^2$.   For simplicity, denote $CFK^\infty(Y,K, \underline{\spinct}_l )\{0,s\}_{ \{ \text{max}(i,j) \ge 0\} }$  by $A'_l(s)$ for $0 \le l \le N-1$ where, as above, $\{0,k\}$ indicates that we have shifted the doubly filtered complex up by $k$.  Then the $A_i$ that occur are ordered as follows if we begin with $l=0$ and $j = 0$:
  $$\ldots A'_{N-1}(N+1+kN)  , A'_{0}(kN) ,  A'_{1}((k-1)N), \ldots ,\hskip.3in$$ $$\hskip.75in\ldots A'_{N-1}((k+1-N)N), A'_{0}(kN-1-N^2), \ldots$$
  
 Notice that the shifts increase by $N$, or when going from $A'_{N-1}$ to $A'_{0}$, by $N+1$. It follows that at most one of $A_i$  is in a band which includes the origin, with all greater $A_i$ being in bands below the origin and all lesser $A_i$ being in bands above the origin. Thus the complex collapses to a single $A_i$, as desired.  
 \end{proof}

%\begin{corollary} The mapping cone complex $\A \to \B$ used to compute the complex $CF^{+} (M(K_{D_{k},n}))$ collapses to a single $A_i$ factor if $k \le n/2$.  
%\end{corollary}

%\begin{proof}
%The manifold $M(K_{D_{k},n})$ is constructed by   performing surgery on a knot whose $CFK^\infty$  complex is of the form $CFK^\infty(S^3_{-2n}( {-2D_{k}}), \mu, \spinc) \otimes^{2k}CFK^\infty(S^3, D)$  (where $D$ is the Whitehead double of the right-hand trefoil, as before, and $\mu$ is the meridian to the connected sum of $2k$ copies of its mirror image, viewed as a knot inside the manifold obtained by $-2n$ surgery).
%By the refiltering theorem, Theorem~\ref{thmrefilter}, the complex $CFK^\infty(S^3_{-2n}( -2D_{k}), \mu, \spinc)$ has width at most two for any $\spinc\in \SpinC(S^3_{-2n}( -2D_{k}))$.  The complex $CFK^\infty(S^3, D)$ has width three, as shown in Appendix~\ref{appendixcfkd}.  A simple exercise gives the addition formula, $\Wd(C_1 \otimes C_2) = \Wd(C_1) + \Wd(C_2) -1$.  It follows that the complex $C$ satisfies $\Wd(C) = 2k+2$.  Combining these results with Theorem~\ref{collapsethm}, we see that the complex collapses if $2k+2 \le 2n$.  
%\end{proof}

We now have all the pieces necessary to prove Theorem \ref{dsurgthm}: 

\begin{proof}[{\bf Proof  of Theorem~\ref{dsurgthm}.}]   Given that $N$ is greater than $2g(K_1)$, we can apply the refiltering theorem (Theorem~\ref{thmrefilter}) to show that the complex  $CFK^\infty(S^3_{-N}( K_1), \mu, \spinc)$ has width at most two for any $\spinc\in \SpinC(S^3_{-N}(K_1))$.   The K{\"u}nneth theorem  for the knot Floer homology of connected sums (\cite[Theorem 7.1]{Knots} c.f. \cite[Theorem 5.1]{os4}) implies $$CFK^\infty(S^3_{-N}( K_1), \mu\#K'_2, \spinc)\simeq \CFK^\infty(S^3_{-N}(K_1),\mu,\spinc)\otimes \CFK^\infty(S^3,K'_2),$$
for any knot $K'_2\subset S^3$ and any $\spinc\in \SpinC(S^3_{-N}(K_1))$.    Now \cite[Theorem 1.2]{GenusBounds} implies that the width of $\CFK^\infty(S^3,K'_2)$ is equal to $2g(K'_2)+1$, and a simple exercise gives the addition formula, $\Wd(C_1 \otimes C_2) = \Wd(C_1) + \Wd(C_2) -1$.   Thus the width of the complex for $K_2=\mu\#K'_2\subset S^3_{-N}(K_1)=Y$ is at most $2g(K'_2)+2$.

Thus, according to the collapse theorem (Theorem  \ref{collapsethm}), for  each \Sp\ structure $\spinc$ on $Y_N(K_2)$, the homology $CF^+(Y_N(K_2), \spinc)$ is given by a single complex $A_i$.  This complex is of the form $C^\infty / C^\infty_{\{i<0, j<0\}}$ where $C^\infty$ is the complex $CFK^\infty(Y, K_2, \spinc')$ shifted down by some parameter $m$,  $-N^2/2 \le m \le N^2/2$, and where $\spinc'$ is some \Sp\  structure on $Y$.  Alternatively, it is   the quotient $$ CFK^\infty (Y, K_2, \spinc')/ CFK^\infty(Y,K_2, \spinc')_{\{i<0, j<m\}}.$$  The gradings are shifted, but the shift is independent of the choice of $K_1$ and $K'_2$.   

The action of $U$ is to shift downward along the diagonal.  Thus, the kernel of the $U$ action is precisely the set of elements at filtration level 0 as described in the statement of Theorem~\ref{dsurgthm}.
\end{proof}
\vskip.2in

%%%%%%%SECTION%%%%%%%%%%%%%%
\section{Computations}\label{computesection}

%%%%SUBSECTION %%%%%%%%
\subsection{Knot complexes}  

For a given $n$  and $k$ we have defined $K_{D_{k},n}$ to be the knot shown in Figure \ref{figureknot}, with the knot $J$ given by $k D$ (where $D$ continues to denote the positive-clasped untwisted Whitehead double of the right-handed trefoil).  In this case, $M(K_{D_{k},n}) $ is given as  $(-2n, 2n)$--surgery on the link formed from the Hopf link by replacing the first component with $-2kD$ and the second component by $2kD$; see Figure~\ref{figurecover}.  As mentioned earlier, $n$ will be in the set $\caln$ described in Proposition~\ref{defineNprop} and Proposition~\ref{thm:number}.  For each $n$ we will choose a value for $k$, denoted $k_n$, selected to satisfy certain properties.    
A key result, which follows from the work in Appendices~\ref{appendixcfkd} and~\ref{appendixt22k+1}, is the following.

\begin{prop} \label{prop:DandT23}$\ $

\begin{itemize}
\item  The chain complex $\cfk^\infty(S^3,D)$ is filtered chain homotopy equivalent to
the chain complex $\cfk^\infty(S^3,T_{2,3})\oplus A$, where $A$ is an acyclic
complex. If $[x,i,j] $ is a filtered generator of $\cfk^\infty(S^3,D)$, then $|i-j|\le 1$.

\item The chain complex $\cfk^\infty(S^3,D_k)$ is filtered chain homotopy equivalent to
the chain complex $\cfk^\infty(S^3,T_{2,2k+1})\oplus A$, where $A$ is an acyclic
complex. If $[x,i,j] $ is a filtered generator of $\cfk^\infty(S^3,D_k)$, then $|i-j|\le k$.

\end{itemize}
\end{prop}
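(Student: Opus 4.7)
For the first item, the strategy is to identify $\cfk^\infty(S^3, D)$ with the trefoil complex plus an acyclic summand by explicit computation. Since $D = D_+(T_{2,3})$ and $\tau(T_{2,3}) = 1 > 0$, Hedden's computation of the knot Floer complex of positively-clasped Whitehead doubles determines the ``top part'' of $\cfk^\infty(D)$, showing it matches that of $T_{2,3}$. The detailed analysis in the appendix on $\cfk$ of $D$ proceeds from an explicit Heegaard diagram for the Whitehead pattern to establish the filtered chain homotopy equivalence $\cfk^\infty(S^3, D) \simeq \cfk^\infty(S^3, T_{2,3}) \oplus A$, with $A$ acyclic (though $A$ contributes nontrivially to $\widehat{HFK}$, as it must since $\Delta(D) = 1 \neq \Delta(T_{2,3})$). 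The filtration bound $|i - j| \leq 1$ follows from the Ozsv\'ath--Szab\'o genus bound, since $g(D) = 1$.

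For the second item, I propose a two-step approach. First, apply the K\"unneth formula for knot Floer homology of connected sums \cite[Theorem 7.1]{Knots} to obtain $\cfk^\infty(S^3, D_k) \simeq \cfk^\infty(S^3, D)^{\otimes k}$. Substituting the decomposition from the first item and expanding via the distributive law yields $\cfk^\infty(S^3, T_{2,3})^{\otimes k}$ plus cross terms; since every cross term contains at least one tensor factor equal to the acyclic complex $A$, and tensoring an acyclic complex over $\mathbb{F}$ with any complex yields an acyclic complex, the cross terms combine into a single acyclic summand. Thus $\cfk^\infty(S^3, D_k)$ reduces, modulo acyclic summands, to $\cfk^\infty(S^3, T_{2,3})^{\otimes k}$.

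The second step (the appendix on $T_{2, 2k+1}$) is to establish a filtered chain homotopy equivalence $\cfk^\infty(S^3, T_{2,3})^{\otimes k} \simeq \cfk^\infty(S^3, T_{2, 2k+1}) \oplus A''$ with $A''$ acyclic. The plan is to identify an explicit $(2k+1)$-generator staircase subcomplex within the $3^k$-generator tensor product, corresponding to the staircase complex of $T_{2, 2k+1}$, and to cancel the remaining $3^k - (2k+1)$ generators in acyclic pairs via filtration-preserving chain homotopies. The filtration bound $|i - j| \leq k$ then follows from additivity of the Seifert genus under connected sum. The main obstacle is this explicit cancellation: managing the combinatorial complexity of the $k$-fold tensor product and constructing the required homotopies so that they respect the $\mathbb{Z} \oplus \mathbb{Z}$ filtration, while ensuring that the summand that is split off is genuinely acyclic as a chain complex and not merely filtered-acyclic. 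The key insight I would exploit is that the trefoil complex has a rigid ``L-shape'' whose iterated tensor products admit a predictable inductive pattern, allowing one to peel off a staircase of length $2k+1$ at each stage and argue inductively on $k$.
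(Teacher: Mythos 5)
Your proposal follows the same overall strategy as the paper: item one is established by the detailed appendix computation starting from Hedden's calculation of $\widehat{\hfk}(D)$, and item two combines the K\"unneth theorem with an inductive proof (the second appendix) that $CFK^\infty(S^3,T_{2,3})^{\otimes k}$ splits as $CFK^\infty(S^3,T_{2,2k+1})$ plus an acyclic summand, with cross terms in $(CFK^\infty(T_{2,3})\oplus A)^{\otimes k}$ absorbed into the acyclic piece because each contains a tensor factor of $A$. Two small corrections to your sketch of the details: the first appendix does not work from ``an explicit Heegaard diagram for the Whitehead pattern''; instead, it starts from the already-computed $\widehat{\hfk}_\ast(D,j)$ groups, uses Rasmussen's lemma to identify $\widehat{\cfk}(D)$ with its homology, and then determines the vertical, horizontal, and diagonal components of $\partial$ purely algebraically from $\partial^2=0$ and the requirement that the vertical and horizontal homologies be $\widehat{\hf}(S^3)\cong\ff$. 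Second, in the inductive step for $T_{2,3}^{\otimes k}$ the acyclic summands peeled off are not ``pairs'' but rank-four subcomplexes spanning three grading levels; this is cosmetic but relevant when carrying out the cancellation you correctly identify as the main technical obstacle.
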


\begin{proof} The first statement expands on the computation of $\widehat{CFK}(S^3, D)$ given in~\cite{hedden1}.  Its proof occupies Appendix~\ref{appendixcfkd}.  The second statement follows from the relationship between $CFK^\infty(S^3, T_{2,3})^{\otimes k}$ and 
 $CFK^\infty(S^3, T_{2,2k+1}) $ described in Theorem~\ref{appendixt22k+1thm}.
\end{proof}

We next compute the knot Floer complex of the meridian of the connected sum of $2k$ copies of the mirror of the doubled trefoil, in the space formed by surgery upon this connected sum. 
\begin{theorem}\label{theoremx}
For $2n \ge 4k$ and $-n+1 \le m \le n$, the doubly filtered  complex $CFK^\infty(S^3_{-2n}(-2D_{k}),\mu, \spinc_m)$ is chain homotopy equivalent to the complex $C_{2n, k, m} \cong (T \oplus A) \otimes_\ff \ff[U,U^{-1}]$ where $A$ is a finitely generated acyclic complex and $T$ has one generator at filtration level $(0, 0)$ or $(0,-1)$. More precisely, the generator of $T$ has filtration level $(0,0)$ if $m< -2k$ or $m$ odd $< 2k$, and has filtration level $(0,-1)$ if $m\ge 2k$ or $m$ even $\ge -2k$. For any filtered generator $[x,i,j]$, $|i-j| \le 1$.
\label{thm:cfk of mu}
\end{theorem}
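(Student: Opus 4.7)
The plan is to combine the Refiltering Theorem with the structural information about $\cfk^\infty(S^3, D_k)$ provided by Proposition~\ref{prop:DandT23}, and then to perform a direct, case-by-case analysis of the resulting refiltered complex. The knot $-2D_{k}$ is the connected sum of $2k$ copies of the mirror of the untwisted Whitehead double of $T_{2,3}$; each summand has genus $1$, so $g(-2D_{k}) = 2k$ and the hypothesis $2n \ge 4k$ is exactly $N \ge 2g(K)$ as required by the Refiltering Theorem.

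First I would invoke the Refiltering Theorem (Theorem~\ref{thmrefilter}) with $Y^3 = S^3$, $K = -2D_{k}$, and $N = 2n$. This identifies $\cfk^\infty(S^3_{-2n}(-2D_{k}),\mu,\spinc_m)$, up to overall grading shift, with the complex $\cfk^\infty(S^3, -2D_{k})$ re-equipped with the $\Z\oplus\Z$--filtration $\mathcal{F}_m$; in particular every filtered generator automatically satisfies $|i-j|\le 1$, giving the last assertion of the theorem for free. Note also that the hypothesis $-n+1\le m\le n$ is precisely the Refiltering Theorem's range $\lceil(-N+1)/2\rceil\le m\le\lfloor N/2\rfloor$ for $N=2n$.

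Next I would decompose the underlying complex. Proposition~\ref{prop:DandT23} (applied to the mirror) gives
\[
\cfk^\infty(S^3,-D_{k}) \simeq \cfk^\infty(S^3,-T_{2,2k+1}) \oplus A_0,
\]
with $A_0$ acyclic and the $D_k$-generators confined to $|i-j|\le k$. Since $-2D_{k} = (-D_{k})\#(-D_{k})$, the Künneth formula for knot Floer homology of connected sums (\cite[Theorem~7.1]{Knots}) yields
\[
\cfk^\infty(S^3,-2D_{k}) \simeq \cfk^\infty(S^3,-T_{2,2k+1})^{\otimes 2} \oplus A',
\]
where $A'$ is acyclic (tensoring any chain complex with an acyclic one is acyclic over $\ff$). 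Because this is an isomorphism of $(\Z\oplus\Z)$--filtered $\ff[U,U^{-1}]$--complexes, the refiltration $\mathcal{F}_m$ preserves the splitting, so the acyclic summand $A'$ survives as an acyclic summand of $C_{2n,k,m}$; the problem is reduced to analyzing the refiltered complex $\cfk^\infty(S^3,-T_{2,2k+1})^{\otimes 2}$.

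The heart of the argument is therefore step three: the explicit analysis of the refiltered staircase tensor square. The complex $\cfk^\infty(S^3,-T_{2,2k+1})$ is a staircase with $2k+1$ corner generators alternating horizontal and vertical arrows and supported in $|i-j|\le k$; its tensor square is a finite $\ff$--complex supported in $|i-j|\le 2k$, and its generators can be indexed by pairs of staircase indices. Applying the refiltering formula
\[
\mathcal{F}_m([x,i,j]) = \begin{cases} [i,i] & j>i+m, \\ [j-m,j-m-1] & j\le i+m, \end{cases}
\]
the dividing line $j=i+m$ sweeps through this tensor square as $m$ varies. For each value of $m$ in the four regimes $m<-2k$, $-2k\le m<2k$ with $m$ odd, $-2k\le m<2k$ with $m$ even, and $m\ge 2k$, I would locate the unique corner of the tensor staircase whose image under $\mathcal{F}_m$ is either $(0,0)$ or $(0,-1)$, and then verify that every other generator pairs off via a differential that is preserved (and becomes filtered of type $(0,0)\to(0,-1)$ or similar) after refiltration, yielding an acyclic complementary summand. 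The parity distinction reflects which orientation—horizontal or vertical—of staircase arrow crosses the line $j=i+m$ as $m$ is incremented, and the cases $|m|\ge 2k$ correspond to the line lying entirely outside the support of the tensor square, leaving only a trivial unpaired summand.

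The main obstacle will be in this final step: honest bookkeeping for the refiltered tensor square of the $(-T_{2,2k+1})$--staircase and exhibiting, in each of the four regimes for $m$, the explicit acyclic cancellations that leave behind exactly one generator at the stated filtration level. The staircase structure makes every individual cancellation routine, but organizing the full case analysis uniformly in $k$ and $m$ is delicate; most likely one argues by pairing generators along the short arrows of the staircase that lie on the same side of the dividing line $j=i+m$, and treats separately the pair of corner generators lying on opposite sides of that line, one of which becomes the surviving generator of $T$.
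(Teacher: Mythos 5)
Your proposal follows the same skeleton as the paper's proof: invoke the Refiltering Theorem, use Proposition~\ref{prop:DandT23} together with the duality $\cfk^\infty(S^3,-K)\simeq\cfk^\infty(S^3,K)^*$ to replace $-2D_k$ by a torus-knot model modulo an acyclic summand, observe that the acyclic part stays an acyclic summand after refiltering, and then analyze the effect of $\mathcal{F}_m$ on the remaining staircase piece for each $m$. The extraction of $|i-j|\le 1$ directly from the refiltering formula is also exactly the paper's observation.

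The genuine deviation is the intermediate object you settle on: you stop at $\cfk^\infty(S^3,-T_{2,2k+1})^{\otimes 2}$ (plus acyclics), whereas the paper applies Theorem~\ref{appendixt22k+1thm} once more (equivalently, the second bullet of Proposition~\ref{prop:DandT23} with $k$ replaced by $2k$) to collapse the tensor square further to the single staircase $\cfk^\infty(S^3,-T_{2,4k+1})$ modulo acyclics. This extra reduction is worth doing: you would then refilter a $(4k+1)$--generator staircase instead of a $(2k+1)^2$--generator tensor square, and the filtered changes of basis that split off the remaining acyclics become explicit and uniform in $k$ and $m$ (the paper illustrates the pattern in Figure~\ref{figure:t29}: replacing $y$ by $x+y$ repeatedly peels off $\mathbb{F}[U,U^{-1}]$-acyclic squares, leaving one generator whose position in the dominance order is read off from the parity of $m$ and the comparison of $|m|$ with $2k$). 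Your ``pair off generators on the same side of the line $j=i+m$'' strategy should still work on the tensor square, and you correctly identify the bookkeeping as the main obstacle, but it is much heavier than it needs to be and is the one step you leave at the level of a plan rather than a proof. If you insert the extra application of Theorem~\ref{appendixt22k+1thm}, your argument essentially becomes the paper's.

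One small remark worth making explicit (the paper also elides it): the persistence of acyclic summands under refiltering uses that $\mathcal{F}_m$ is monotone for the dominance order on $\Z\oplus\Z$, so a $\Z\oplus\Z$--filtered chain homotopy equivalence remains filtered after refiltering. This is an easy case check on the piecewise formula for $\mathcal{F}_m$, but it is the point that makes ``apply the refiltering theorem to a chain-homotopy representative'' legitimate.
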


\begin{proof}The theorem will be a direct application of the refiltering theorem (Theorem~\ref{thmrefilter}) together with the previous proposition.  To begin, note that  since the genus of $-2D_k$ is $2k$, we can use the refiltering theorem provided that $2n\ge 4k$ (as assumed). Applying  the tensor product to the formula given in Proposition~\ref{prop:DandT23}, we prove in the appendix (Theorem~\ref{appendixt22k+1thm})   that there is a ($\Z\oplus\Z$-filtered) chain homotopy equivalence $$CFK^\infty(S^3,  2k D)\simeq CFK^\infty(S^3,T_{2,4k+1})\oplus A$$ where $A$ is an  acyclic complex.  Recalling that $CFK^\infty(S^3,-K)=CFK^\infty(S^3,K)^*$, we obtain a corresponding decomposition for the mirrors:
$$CFK^\infty(S^3,  -2k D)\simeq CFK^\infty(S^3,-T_{2,4k+1})\oplus A^*.$$
   Applying the refiltering theorem then   gives a decomposition $$CFK^\infty(S^3_{-2n}(-2D_{k}),\mu, \spinc_m)\simeq \CFK^\infty(S^3_{-2n}(-T_{2,4k+1}),\mu,\spinc_m)\oplus A',$$ where $A'$ is an acyclic complex concentrated on one or both of the diagonals mentioned in the theorem (note that, by an abuse of notation, $\mu$ is the meridian to $-2D_k$ and $-T_{2,4k+1}$ on the left and right sides of the equivalence, respectively).  Precisely, $A'$ is the $\Z\oplus\Z$-filtered chain complex which  results from the  refiltration of $A^*$.
   
   Thus it remains to understand  the result of applying the refiltering theorem   to the complex $\CFK^\infty(S^3,-T_{2,4k+1})$.   For reference, the figure shows the complexes for $T_{2,5},T_{2,9}$, and $-T_{2,9}$.  Applying Theorem~\ref{thmrefilter}, one  sees that for each  \Sp\ structure $\spinc_m$, the complex $CFK^\infty(S^3_{-2n}(-T_{2,4k+1}),\mu, \spinc_m)$ is given by a complex concentrated on the diagonal and one below the diagonal.   We wish to understand this complex better.

 \begin{figure}[h]
\fig{.5}{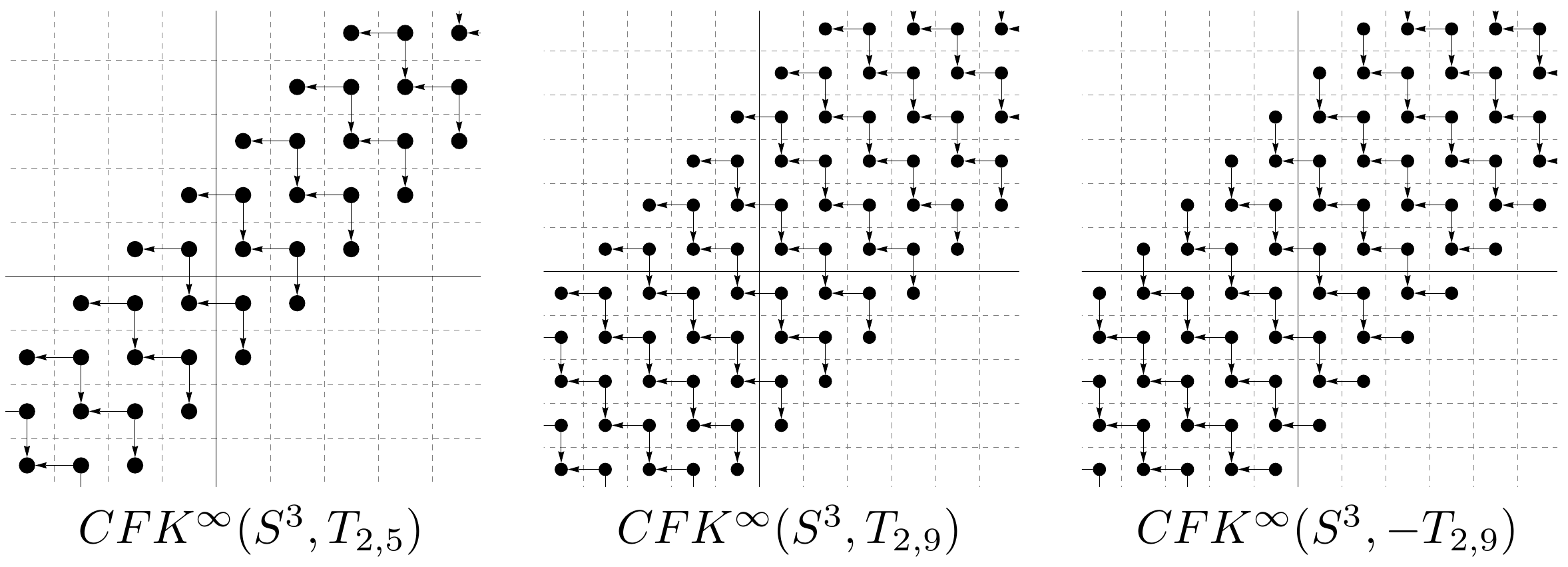}
\caption{}\label{figure2-4r}
\end{figure}

For example,  Figure~\ref{figure:t29}(a)  illustrates the complex  $CFK(S^3_{-2n}(-T_{2,9}),\mu, \spinc_{-3})$, in which we have labeled two of the generators $x$ and $y$.  Replacing $y$ with $x+y$  gives a filtered change of basis, and the new complex is as shown in Figure~\ref{figure:t29}(b).  Notice that this has introduced an acyclic piece.  Repeating the process yields the complex illustrated in Figure~\ref{figure:t29}(c). Applying this simplification in general  shows that for each $m$, the complex $CFK^\infty(S^3_{-2n}(-T_{2,4k+1}),\mu, \spinc_m)$ splits as a direct sum of an acyclic complex (necessarily on  the two stated diagonals) plus a complex of the form $T\otimes \F[U,U^{-1}]$, where $T$ is a single generator of the stated filtration (in fact the complex is filtered homotopy equivalent to $T\otimes \F[U,U^{-1}]$).  This completes the proof of  Theorem~\ref{theoremx}.

\begin{figure}[h]
\fig{1}{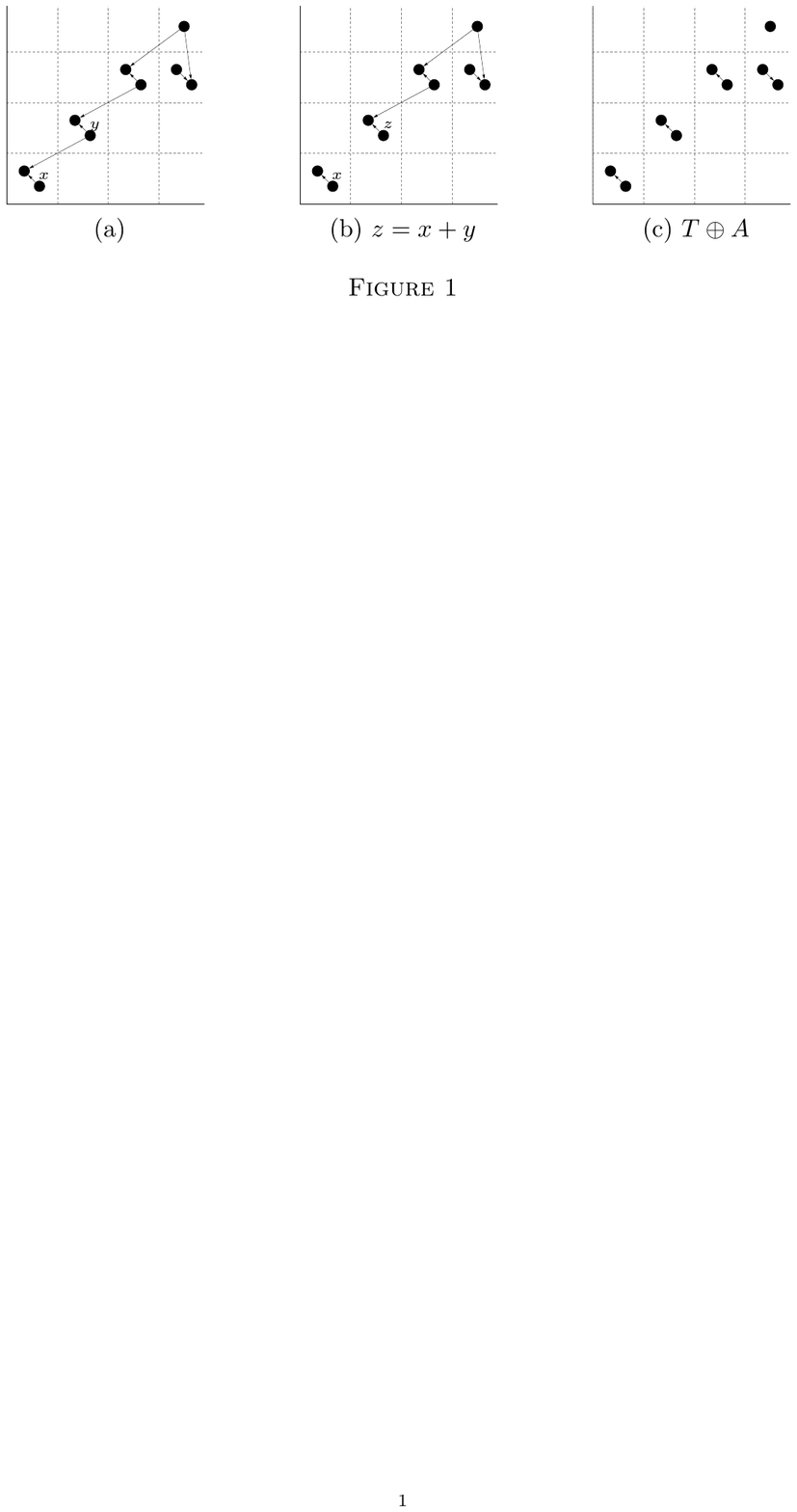} 
\caption{}\label{figure:t29}
\end{figure}

 \end{proof}

We next want to consider the second component of the link.  This is obtained from the meridian of the first component by forming the connected sum with a knot whose  $CFK^\infty$ is  identical, modulo acyclic summands, to  $T_{2, 4k+1}$. Moreover, the complex for the meridian, modulo acyclic summands, is simply that of the unknot with a filtration shift.  Given these observations, the following result is immediate.  
\begin{theorem} \label{thmt24k+1}
For $2n \ge 4k$ and for $-n+1  \le m \le  n$, we have  $$CFK^\infty(S^3_{-2n}(-2D_{k}), \mu\#2D_{k}, \spinc_m)_{i,j} = CFK^\infty(S^3, T_{2,4k+1})_{i, j - \delta} \oplus A,$$
with $A$ an acyclic complex.  Here, $\delta=0$ if $m< -2k$ or $m$ odd $< 2k$;  $\delta=-1$ if $m\ge 2k$ or $m$ even $\ge -2k$. Cycles representing nontrivial classes of grading 0 are located at filtration levels $i+j=2k+\delta$.  For any filtered generator $[x,i,j]$, $|i-j| \le 2k +1$.
\end{theorem}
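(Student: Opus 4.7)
\medskip

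\noindent\textbf{Proof plan for Theorem~\ref{thmt24k+1}.}

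The plan is to reduce the statement to the two inputs already in hand: Theorem~\ref{theoremx} (which computes $CFK^\infty$ for the meridian of $-2D_k$ inside $S^3_{-2n}(-2D_k)$) and Proposition~\ref{prop:DandT23} (which compares $CFK^\infty(S^3, 2D_k)$ to $CFK^\infty(S^3,T_{2,4k+1})$). The bridge between them is the K\"unneth formula for $CFK^\infty$ of a connected sum. Since the knot $\mu\#2D_k \subset S^3_{-2n}(-2D_k)$ is obtained by forming the connected sum of the meridian $\mu\subset S^3_{-2n}(-2D_k)$ with the knot $2D_k \subset S^3$, the K\"unneth theorem for connected sums (\cite[Theorem 7.1]{Knots}, cf.\ \cite[Theorem 5.1]{os4}) yields a $\Z\oplus\Z$--filtered chain homotopy equivalence
\begin{equation*}
CFK^\infty(S^3_{-2n}(-2D_k),\mu\#2D_k,\spinc_m)\;\simeq\;
CFK^\infty(S^3_{-2n}(-2D_k),\mu,\spinc_m)\otimes_{\ff[U,U^{-1}]} CFK^\infty(S^3, 2D_k).
\end{equation*}

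Next I would substitute the two decompositions into this formula. By Theorem~\ref{theoremx}, the first tensor factor is filtered homotopy equivalent to $(T\oplus A_1)\otimes_\ff \ff[U,U^{-1}]$ with $A_1$ acyclic and $T$ a single generator at filtration $(0,0)$ (when $\delta=0$) or $(0,-1)$ (when $\delta=-1$), under the prescription given in the statement. By Proposition~\ref{prop:DandT23}, the second tensor factor splits as $CFK^\infty(S^3,T_{2,4k+1})\oplus A_2$ with $A_2$ acyclic. Expanding the tensor product and using that the tensor product of an acyclic filtered complex with any filtered complex over $\ff[U,U^{-1}]$ is again acyclic, every cross term gets absorbed into a single acyclic summand $A$, leaving
\begin{equation*}
T\otimes CFK^\infty(S^3, T_{2,4k+1})\;\oplus\; A.
\end{equation*}
Since $T$ is a single generator concentrated at filtration $(0,\delta)$, tensoring by $T$ acts as a pure filtration shift: the $(i,j)$-summand on the left corresponds to the $(i,j-\delta)$-summand of $CFK^\infty(S^3,T_{2,4k+1})$. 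This gives the first assertion of the theorem.

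Finally I would read off the two remaining claims from standard properties of $CFK^\infty(S^3,T_{2,4k+1})$. The complex of the positive torus knot $T_{2,4k+1}$ is a staircase whose degree-zero cycles (the generators of $\widehat{HFK}$ in the top Alexander grading, after appropriate shifts) sit along the diagonal $i+j = 2k$, and whose filtered generators satisfy $|i-j|\le 2k$. Shifting the $j$-filtration down by $\delta\in\{0,-1\}$ translates these respectively to $i+j = 2k+\delta$ and $|i-j|\le 2k+|\delta|\le 2k+1$, matching the statement. The one step that requires actual verification rather than bookkeeping is the addition formula $\Wd(C_1\otimes C_2)=\Wd(C_1)+\Wd(C_2)-1$ combined with the specific locations of the generators of $T$ and of the staircase of $T_{2,4k+1}$; since $T$ has width one, however, the bound reduces to the statement that the staircase is supported in a band of width $2k+1$, which is immediate. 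The main (minor) obstacle is verifying that the parity-of-$m$ case distinction inherited from Theorem~\ref{theoremx} survives the tensor product without any interference from $A_2$, which follows from the fact that the acyclic summand is absorbed without altering the location of $T$.
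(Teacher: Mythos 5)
Your proof is correct and is precisely the argument the paper leaves implicit (the paper states the result is ``immediate'' given the preceding observations). You correctly identify the three ingredients—the K\"unneth formula for $CFK^\infty$ of a connected sum, Theorem~\ref{theoremx} for the meridian factor, and Proposition~\ref{prop:DandT23} (applied with $2k$ in place of $k$, using $2D_k=D_{2k}$) for the $2D_k$ factor—and you correctly observe that all cross terms with an acyclic tensor factor are absorbed into $A$ because the complexes are free over $\ff[U,U^{-1}]$, so that what remains is the single-generator complex $T$ tensored with $CFK^\infty(S^3,T_{2,4k+1})$, i.e.\ a pure $j$-shift by $\delta$. The final bookkeeping for the grading-$0$ diagonal $i+j=2k+\delta$ and the band $|i-j|\le 2k+1$ (combining $|i-j|\le 1$ for the first factor with $|i-j|\le 2k$ for the second) is also right.
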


We will need to compare this with the case of $J$ the unknot, for which  the computation is simpler.  The result is as follows.

\begin{theorem} \label{thmsuu}
For $2n \ge 3$ and for $ -n+1\le m \le n$, we have  $$CFK^\infty(S^3_{-2n}(-U), \mu\#U, \spinc_m)_{i,j} = CFK^\infty(S^3, U)_{i, j - \delta}.$$
Here, $\delta=0$ if $m<0$ and $\delta=-1$ if $m\ge 0$.
The cycle representing a nontrivial homology class is at filtration level $(0,\delta)$.
\end{theorem}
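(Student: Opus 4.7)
The plan is to deduce Theorem~\ref{thmsuu} as a direct specialization of the Refiltering Theorem (Theorem~\ref{thmrefilter}) to the trivial case $K = U$, for which $CFK^\infty(S^3, U)$ is already explicit.

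First I would observe that since $U$ is the unknot we have $-U = U$ and $\mu \# U = \mu$, so the object of interest is $CFK^\infty(S^3_{-2n}(U), \mu, \spinc_m)$. The hypotheses of Theorem~\ref{thmrefilter} are satisfied with $N = 2n$: the inequality $N \ge 2g(U) = 0$ holds automatically for $2n \ge 3$, and the allowed range $\lceil (-N+1)/2 \rceil \le m \le \lfloor N/2 \rfloor$ for $N = 2n$ collapses to exactly $-n+1 \le m \le n$.

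Next, I would recall that $CFK^\infty(S^3, U)$ is the free $\ff[U,U^{-1}]$--module on a single generator $x$ at bifiltration $(0,0)$, with trivial differential; its $\ff$--basis consists of the translates $[x, i, i]$ for $i \in \zz$. Substituting $j = i$ into the piecewise formula
$$\mathcal{F}_m([x, i, j]) = \begin{cases} [i, i] & \text{if } j > i + m, \\ [j-m, j-m-1] & \text{if } j \le i + m, \end{cases}$$
from Theorem~\ref{thmrefilter}, the condition $j > i + m$ reduces to $m < 0$, so the case split is determined entirely by the sign of $m$.

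If $m < 0$, every generator keeps its original bifiltration: $\mathcal{F}_m([x, i, i]) = [i, i]$. Hence the refiltered complex coincides on the nose with $CFK^\infty(S^3, U)$, giving $\delta = 0$ with distinguished cycle at $(0, 0)$. If $m \ge 0$, each generator is sent to $[i - m, i - m - 1]$; reindexing by $a = i - m$, the refiltered basis sits at bifiltrations $(a, a-1)$ for $a \in \zz$. This is exactly $CFK^\infty(S^3, U)$ with its $j$--filtration shifted down by one, i.e., the complex with entries $CFK^\infty(S^3, U)_{i, j+1}$, corresponding to $\delta = -1$ with distinguished cycle at $(0, -1)$.

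The argument is essentially bookkeeping and presents no serious obstacle. The only point worth verifying is that the reindexing $a = i - m$ in the $m \ge 0$ case is a genuine $\ff[U, U^{-1}]$--module isomorphism onto the claimed shift of $CFK^\infty(S^3, U)$; this is immediate because $U$ acts by uniform translation by $(-1, -1)$, so a global shift of the $i$--coordinate merely renames the generators, preserving the module structure and the bifiltration up to the stated $j$--shift.
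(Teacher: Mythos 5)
Your proof is correct and takes essentially the same route the paper implicitly intends: specialize the Refiltering Theorem to $K = U$, note the genus condition and allowed $m$-range match automatically for $N = 2n \ge 3$, and read off the single generator $[x,i,i]$ of $CFK^\infty(S^3,U)$ under $\mathcal{F}_m$, where the case split $j > i+m$ versus $j \le i+m$ degenerates to the sign of $m$. The paper omits this proof, calling it ``immediate'' by analogy with Theorem~\ref{thmt24k+1}; your calculation supplies exactly the missing bookkeeping, and the resulting bifiltration shifts and the location $(0,\delta)$ of the distinguished cycle agree with the statement (and with the $k=0$ specialization of Theorem~\ref{thm:cfk of mu}).
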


%%%%SUBSECTION %%%%%%%%

\subsection{$d$--invariants and acyclic summands}  As already seen, many of the  complexes that arise have  included acyclic summands.  We will need to see that these summands do not affect the computations of  the relevant $d$--invariants.   Rather than present the  most general theorem concerning acyclic summands, we will restrict ourselves to a simpler setting  for which the proof is more straightforward. 

 Let $\cald$ be a free, finitely generated $\ff[U,U^{-1}]$--chain complex that is $\Q$--graded.  Moreover, suppose that $\cald$ has a distinguished basis, and is $\Z$--filtered (by subcomplexes) by the corresponding distinguished    $\F[U]$-submodules   $$ ...\subset U^{k}\F[U]\subset U^{k-1}\F[U]\subset U^{k-2}\F[U]\subset ...$$  Thus the action of $U$ lowers filtration level by one.  Assume that it lowers  grading by two.  We let $d(\cald)$ denote the least grading of a nontrivial homology class $z \in H(\cald / \cald_{i<0} )$ where $z$ is in the image of $U^k$ for all $k$; here $U^k$ is  viewed as an endomorphism of $ H(\cald / \cald_{i<0} )$.  (If such an element does not exist, then $d(\cald) = -\infty$.)
 
 A particular example can be built from a  finitely generated acyclic $\ff$--chain complex  $A$ which is filtered and graded: regard a filtered generator $\x$ of $A$ as a monomial $\x\otimes U^{-(\mathrm{Filtration\ of\ } \x)}$ and form the $\ff[U,U^{-1}]$--complex $\cala =  A \otimes \ff[U, U^{-1}]$, so that $A \otimes 1$ has the same filtration and grading as $A$,  and $U$ acts on the right, decreasing filtration level by   one and grading by two.  Write $A_k = A \otimes U^k$. Thus an element in $A_k$ has filtration level equal to the filtration level of the corresponding element in $A$, shifted down by $k$.

\begin{prop}\label{acyclicprop} If $\cald' \cong \cald \oplus \cala$ with $\cald$ and $\cala$ as above, then     $d(\cald') = d(\cald)$.

\end{prop}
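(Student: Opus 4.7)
The plan is to exploit the direct sum decomposition throughout. Because the filtration on $\cald'$ respects the splitting, at the chain level one has an identification
\[
\cald'/\cald'_{i<0} \;=\; \cald/\cald_{i<0} \;\oplus\; \cala/\cala_{i<0}
\]
of $\qq$--graded $\ff[U]$--chain complexes, yielding on homology a corresponding splitting of $\qq$--graded $\ff[U]$--modules. The subgroup $\bigcap_{k\ge 0}\operatorname{Im}(U^k)$ in which the $d$--invariant lives is compatible with direct sums, so it splits as a direct sum of the corresponding subgroups from the two factors. Once I show this subgroup is zero for $H(\cala/\cala_{i<0})$, the minimum-grading element in the tower for $H(\cald'/\cald'_{i<0})$ will coincide with that in $H(\cald/\cald_{i<0})$, giving $d(\cald') = d(\cald)$; the case $d = -\infty$ is handled uniformly by this argument.

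The crux is therefore showing $\bigcap_{k\ge 0} \operatorname{Im}(U^k) = 0$ in $H(\cala/\cala_{i<0})$. First, $\cala = A \otimes_\ff \ff[U, U^{-1}]$ is acyclic since $A$ is acyclic and $\ff[U, U^{-1}]$ is free over $\ff$; hence $H(\cala) = 0$ by K\"unneth. Second, because $A$ is finitely generated, the set of integers $\{g(x) - 2f(x) : x \text{ a basis element of } A\}$ is bounded above by some $n_0$, where $g$ and $f$ denote the grading and filtration on $A$. A basis element $x \otimes U^m$ of $\cala$ has grading $n$ exactly when $m = (g(x)-n)/2$, and lies in $\cala_{i<0}$ exactly when $m > f(x)$; these two conditions are incompatible whenever $n > n_0$. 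Thus $(\cala_{i<0})_n = 0$ in all gradings $n > n_0$, so the projection $\cala_n \to (\cala/\cala_{i<0})_n$ is a chain-level isomorphism there. For any $n$ large enough that both $n$ and $n-1$ lie in this range, $H_n(\cala/\cala_{i<0}) = H_n(\cala) = 0$.

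Given this high-grading vanishing, the tower in $H(\cala/\cala_{i<0})$ is zero: any $\bar z \in H_g(\cala/\cala_{i<0})$ in $\bigcap_k \operatorname{Im}(U^k)$ can be written as $U^k \bar w_k$ with $\bar w_k$ of grading $g + 2k$, which is forced to vanish once $g + 2k$ exceeds the vanishing threshold. Combined with the first-paragraph splitting, this completes the argument. The main subtlety I anticipate is the grading/filtration bookkeeping in the vanishing step, in particular confirming that the combinatorial argument $(\cala_{i<0})_n = 0$ for $n \gg 0$ remains valid across both parities of $\qq$--grading induced by the $U$--action; everything else is formal.
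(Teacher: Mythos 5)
Your argument is correct and essentially replicates the paper's proof: both split the quotient along the direct sum and use finite generation plus acyclicity of $A$ to show that the $U$--tower in $H(\cala/\cala_{i<0})$ escapes the bounded region where homology can be nonzero, hence vanishes. You index the vanishing by homological grading while the paper indexes it by the $U$--translates $A_k$; these bookkeeping choices are equivalent because each $A_k$ occupies a grading window that slides by two as $k$ increases.
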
 
\begin{proof}  Forming the quotient complex of $\cala$ with the subcomplex $\cala^- = \cala_{i<0}$ of elements with  filtration level less than 0 yields a complex $\cala^+ = \cala / \cala^-  $.  This complex decomposes over $\ff$ as  $\oplus_k  A_k  / (A_k \cap \cala^-)$.

Since $A$ is finitely generated, there is an $N$ such that: (1)  if $k >N$, then $(A_k \cap \cala^-) = A_k$; and (2) if $k < -N$, then $(A_k \cap A^- = 0)$.  Recalling that $A$ is acyclic, we see that for all $k$ with $|k| >N$, the homology group $H(  A_k  /  (A_k \cap \cala^-) ) = 0$. 

The action of $U$ maps $H(   A_k / (A_k \cap \cala^-) ) $ to $ H(  A_{k+1}  / (A_{k+1} \cap \cala^-) ) $.  The only possible nontrivial elements in the homology of $\cala / \cala^-$ are sums of elements in $ H(  A_{k+1}  / (A_{k+1} \cap \cala^-) ) $ for $|k|  \le N$.  But no such element can be in the image of $U^{2N}$ since it would then be in the image of an element in $H(A_{k} / (A_k \cap \cala^-) )$, for some $k<-N$, and we have seen these groups are trivial.

Given this, we see each nontrivial elements of $H(\cald' / \cald'_{<0})$ that is in the image of $U^k$ for arbitrarily large $k$  is also in the  image of an element of $H(\cald / \cald_{<0})$.  

\end{proof}

%%%%SUBSECTION %%%%%%%%
\subsection{Computations of $d$--invariants}

We   need to  compute the difference of $d$--invariants, $d(M(K_{D_{k},n}),\spinc_n) -d(M(K_{U,n}),\spinc_n)$, for any $k$ with $0\le k<n/2$.
Recall that $M(K_{D_{k},n})$ is also denoted by $S^3_{-2n,2n}(-2D_{k},2D_{k})$.        

In the following theorem we use $\epsilon_i$ to denote a grading shift.  As stated in the theorem, these are homological invariants that depend on the value of $n$, but are independent of the particular knots chosen.  Thus, the values of $\epsilon$ in the first two equations and in the last two equations are equal and their particular values irrelevant.  For this reason we denote them  simply by  $\epsilon_1$ and $\epsilon_2$.  We also include in the statement the number $d(S^3, \spinc_0)$ despite it equalling 0; this highlights the  role of the  $d$--invariant of the base space in which the knot lies.

\begin{theorem}\label{diff_of_d_thm}  For any $0\le k<n/2$
\begin{align}  
 0&= d(S^3_{-2n}(-2D_{k}),\spinc_{-n})-d(S^3,\spinc_0)-\epsilon_1 \label{deq1} \\
 0&= d(S^3_{-2n}(-U),\spinc_{-n} )-d(S^3,\spinc_0)-\epsilon_1 \label{deq2} \\
 -2k&= d(S^3_{-2n,2n}(-2D_{k},2D_{k}),\spinc_n)
 - d(S^3_{-2n}(-2D_{k}),\spinc_{-n})-\epsilon_2 \label{deq3} \\
 0&= d(S^3_{-2n,2n}(-U,U),\spinc_n)
 -d(S^3_{-2n}(-U),\spinc_{-n})-\epsilon_2, \label{deq4}
\end{align}
where $\spinc_0$ is the unique spin structure on $S^3$ and $\epsilon_i$ are grading shifts. The grading shifts $\epsilon_i$ are homological invariants \cite{os2} and hence (\ref{deq1}) and (\ref{deq3}), respectively, have the same grading shifts $\epsilon_1$ and $\epsilon_2$ as (\ref{deq2}) and (\ref{deq4}), respectively.
\end{theorem}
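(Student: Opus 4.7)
The plan is to organize the four equations into two parallel pairs: equations (\ref{deq1}) and (\ref{deq2}) address the intermediate surgeries, and equations (\ref{deq3}) and (\ref{deq4}) address the branched covers. Within each pair, the grading shifts $\epsilon_i$ are common because they arise from the ambient parts of the refiltering theorem and the large-surgery theorem, whose shifts depend only on $n$ and the surgery coefficient, not on the knots involved.

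For (\ref{deq1}) and (\ref{deq2}), I would apply the refiltering theorem (Theorem~\ref{thmrefilter}) to identify, for both $K = -2D_k$ and $K = -U$, the complex $CFK^\infty(S^3_{-2n}(K), \mu, \spinc_{-n})$ with a refiltered version of $CFK^\infty(S^3, K)$ carrying the same grading shift $\epsilon_1$. Theorem~\ref{theoremx} then asserts that each such complex reduces, modulo an acyclic summand, to $T \otimes \ff[U, U^{-1}]$ with a single generator at filtration $(0,0)$ or $(0,-1)$. By Proposition~\ref{acyclicprop} the acyclic summand does not contribute to the $d$-invariant, and so both $d$-invariants equal $d(S^3, \spinc_0) + \epsilon_1 = \epsilon_1$, establishing (\ref{deq1}) and (\ref{deq2}) simultaneously.

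For (\ref{deq3}) and (\ref{deq4}), the plan is to invoke the large-surgery theorem (Theorem~\ref{dsurgthm}), viewing $M(K_{D_k, n})$ as $2n$-surgery on $\mu \# 2D_k \subset S^3_{-2n}(-2D_k)$ and $M(K_{U, n})$ as $2n$-surgery on $\mu \# U \subset S^3_{-2n}(-U)$. This presents both $CF^+(M(K_{D_k, n}), \spinc_n)$ and $CF^+(M(K_{U, n}), \spinc_n)$ as quotients of the form $CFK^\infty / CFK^\infty_{\{i < 0, j < m\}}$, shifted in grading by a common $\epsilon_2$. Theorems~\ref{thmt24k+1} and~\ref{thmsuu} then identify these knot Floer complexes, modulo acyclic summands, with $CFK^\infty(S^3, T_{2, 4k+1})$ and $CFK^\infty(S^3, U)$ respectively, both shifted by the same $j$-coordinate amount $\delta$. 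In the unknot case the single generator at $(0, \delta)$ provides the bottom of the $HF^+$-tower, giving $d(M(K_{U, n}), \spinc_n) = \epsilon_1 + \epsilon_2$. In the $T_{2, 4k+1}$ case the grading-zero tower cycle is supported along the staircase with corners $(0, 2k+\delta), \ldots, (2k, \delta)$, and I would identify the lowest $U$-descent of this cycle that survives the quotient as the cycle realizing the tower bottom, which has grading $2k$ below the corresponding unknot cycle. Combined with (\ref{deq1}) and (\ref{deq2}), this yields $d(M(K_{D_k, n}), \spinc_n) = \epsilon_1 + \epsilon_2 - 2k$, proving (\ref{deq3}) and (\ref{deq4}).

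The main obstacle is the bookkeeping in the $T_{2, 4k+1}$ case: one must identify which $U$-iterate of which staircase corner yields the homologically nontrivial cycle of minimum grading in the quotient $A_m^+$, and verify that the descent is exactly $2k$ once the shifts $\epsilon_1$, $\epsilon_2$, and $\delta$ are correctly accounted for and acyclic summands excised via Proposition~\ref{acyclicprop}. Confirming that these shifts are genuinely common to the two parallel computations---which the knot-independence clauses of Theorems~\ref{thmrefilter}, \ref{dsurgthm}, \ref{thmt24k+1}, and \ref{thmsuu} guarantee---is essential for the $-2k$ difference to emerge as a clean residual rather than being tangled up with the $\epsilon_i$.
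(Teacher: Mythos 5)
Your proposal is correct and tracks the paper's argument closely: the parallel-pair organization, the reliance on knot-independence of the grading shifts in Theorems~\ref{thmrefilter} and~\ref{dsurgthm}, the use of Proposition~\ref{acyclicprop} to excise acyclic summands, and the identification of the $-2k$ descent along the $T_{2,4k+1}$ staircase all match the paper's proof. The only small deviation is in establishing (\ref{deq1})--(\ref{deq2}), where you route through the refiltering theorem and Theorem~\ref{theoremx} applied to the meridian complex, whereas the paper invokes the large-surgery identification $CF^+(S^3_{-2n}(K),\spinc_{-n}) \simeq CFK^\infty(S^3,K)/CFK^\infty(S^3,K)_{\{i<0\,\cup\, j<-n\}}[\epsilon]$ directly from Ozsv\'ath--Szab\'o and then reads off the surviving grading-zero cycle at $(0,-2k)$; these are the same computation packaged differently, since Theorem~\ref{thmrefilter} refines precisely that identification.
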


\begin{proof}
Ozsv\'{a}th and Szab\'o~\cite[Corollary 4.2]{Knots} showed that  for a knot $K$ in $S^3$ and $|- 2n| \ge 2g(K) - 1$, the complex $CF^+(S^3_{-2n}(K),\spin_{-n})$ is filtered chain homotopic to
$$
CFK^\infty(S^3,K)/CFK^\infty(S^3,K)_{\{i<0\,\cup\, j<-n\}}[\epsilon],
$$
where the grading shift $\epsilon$ is $d(S^3,\spinc_0)+\epsilon_1$ according to \cite{os2}.  (The value of $\epsilon_1$ can be computed explicitly, 
but we do not need its exact value in our computations.)

Proposition~\ref{prop:DandT23} along with Proposition~\ref{acyclicprop}  allows us to replace $-2D_k$ with $-T_{2,4k+1}$. We see that in the complex $CFK^\infty(S^3,-T_{2,4k+1})$ the cycle at filtration level $(0,-2k)$ is the cycle of grading zero having the least $j$--filtration among all grading zero cycles, and all cycles of grading less than zero have $i$--filtration less than zero. Since $-2k>-n$, the cycle at filtration level $(0,-2k)$ lives and all the cycles of grading less than zero vanish in the quotient.
See Figure~\ref{fig:cfk_quotient1} for the case $-n=-4$ and $k=1$.

\begin{figure}[h]
\fig{1}{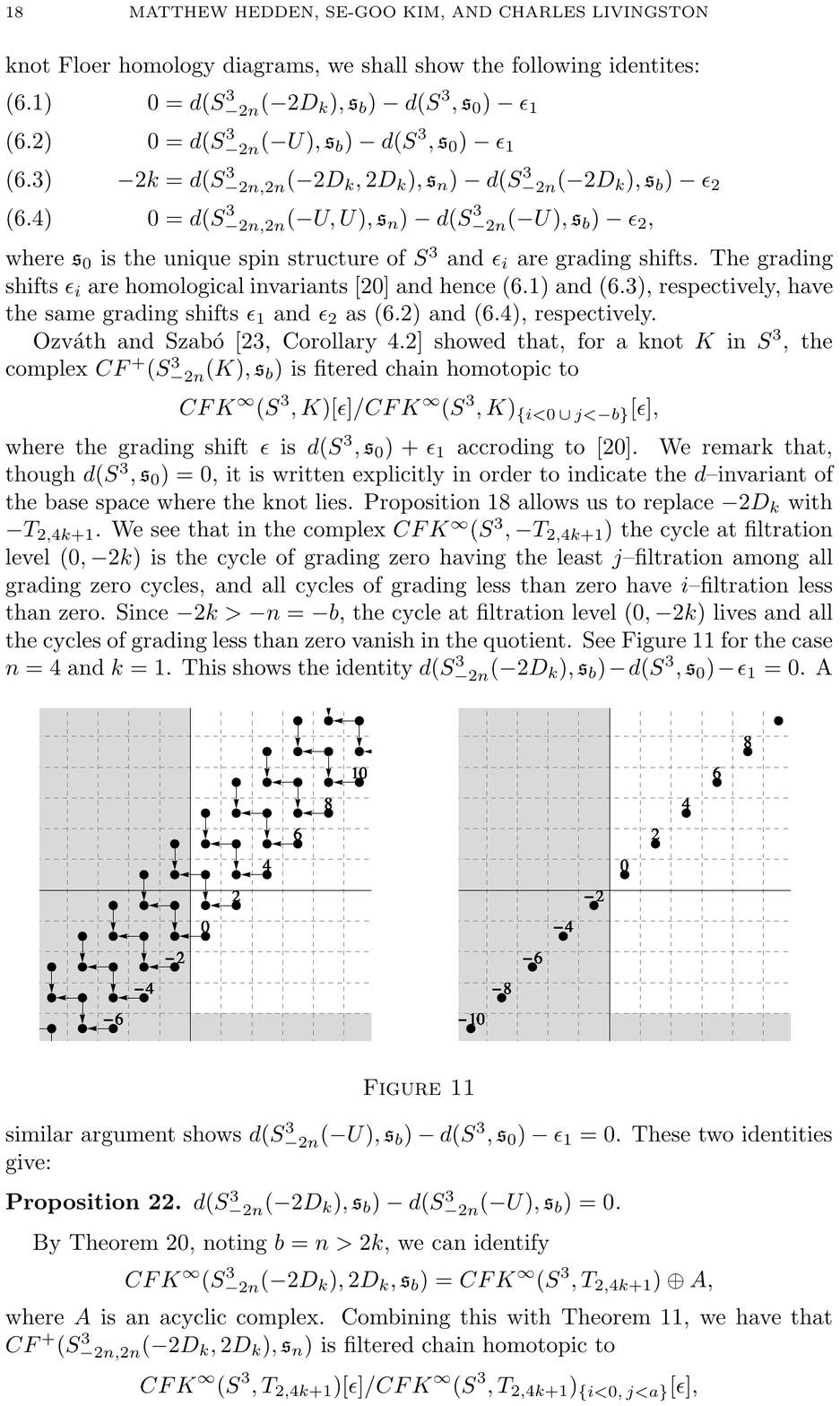}
\caption{}\label{fig:cfk_quotient1}
\end{figure}

This shows the identity $d(S^3_{-2n}(-2D_{k}),\spinc_{-n}) -d(S^3,\spinc_0) -\epsilon_1=0$.  A similar argument shows $d(S^3_{-2n}(-U),\spinc_{-n}) -d(S^3,\spinc_0) -\epsilon_1=0$.  These two identities give rise to Equation~\ref{deq1}.  Equation~\ref{deq2} is similar.

By Theorem~\ref{thmt24k+1}, noting $ n>2k$, we can identify
\[
CFK^\infty(S^3_{-2n}(-2D_{k}),\mu\#2D_{k},\spinc_n)_{i,j}=
CFK^\infty(S^3,T_{2,4k+1})_{i,j+1}\oplus A,
\]
where $A$ is an acyclic complex. 
That is, the  complex $CFK^\infty(S^3_{-2n}(-2D_k),2D_k,\spinc_n)$ is filtered chain homotopic to $CFK^\infty(S^3,T_{2,4k+1})$ with $j$--filtration shifted downward by one plus an acyclic complex $A$.
Combining this with Theorem~\ref{dsurgthm} and using Proposition~\ref{acyclicprop} to eliminate the acyclic summand from the computation, we have that the $d$--invariant associated to $CF^+(S^3_{-2n,2n}(-2D_{k},2D_{k}),\spinc_n)$ is equal to  that of 
\[
CFK^\infty(S^3,T_{2,4k+1})_{i,j+1}/ CFK^\infty(S^3,T_{2,4k+1})_{i,j+1  \{i<0,\,j<0\}}[\epsilon],
\]
where   $\epsilon=d(S^3_{-2n}(-2D_k),\spinc_n)+ \epsilon_2$ for some $\epsilon_2$ independent of $T_{2,4k+1}$.
The cycles $x$ at filtration level $(i,2k-i-1)$, $0\le i\le 2k$, are all of the
grading zero cycles in $CFK^\infty(S^3,T_{2,4k+1})_{i,j+1}$. It is easy to see
that the cycles $U^{k}x$ at filtration level $(i',-i'-1)$, $-k\le i'\le k$,
have grading $-2k$ and none of them vanish in the quotient, while
at least one of $U^{k'}x$ vanishes in the quotient if $k'>k$.
See Figure~\ref{fig:cfk_quotient2} for the case $n=4$ and $k=1$.

\begin{figure}[h]
\fig{0.93}{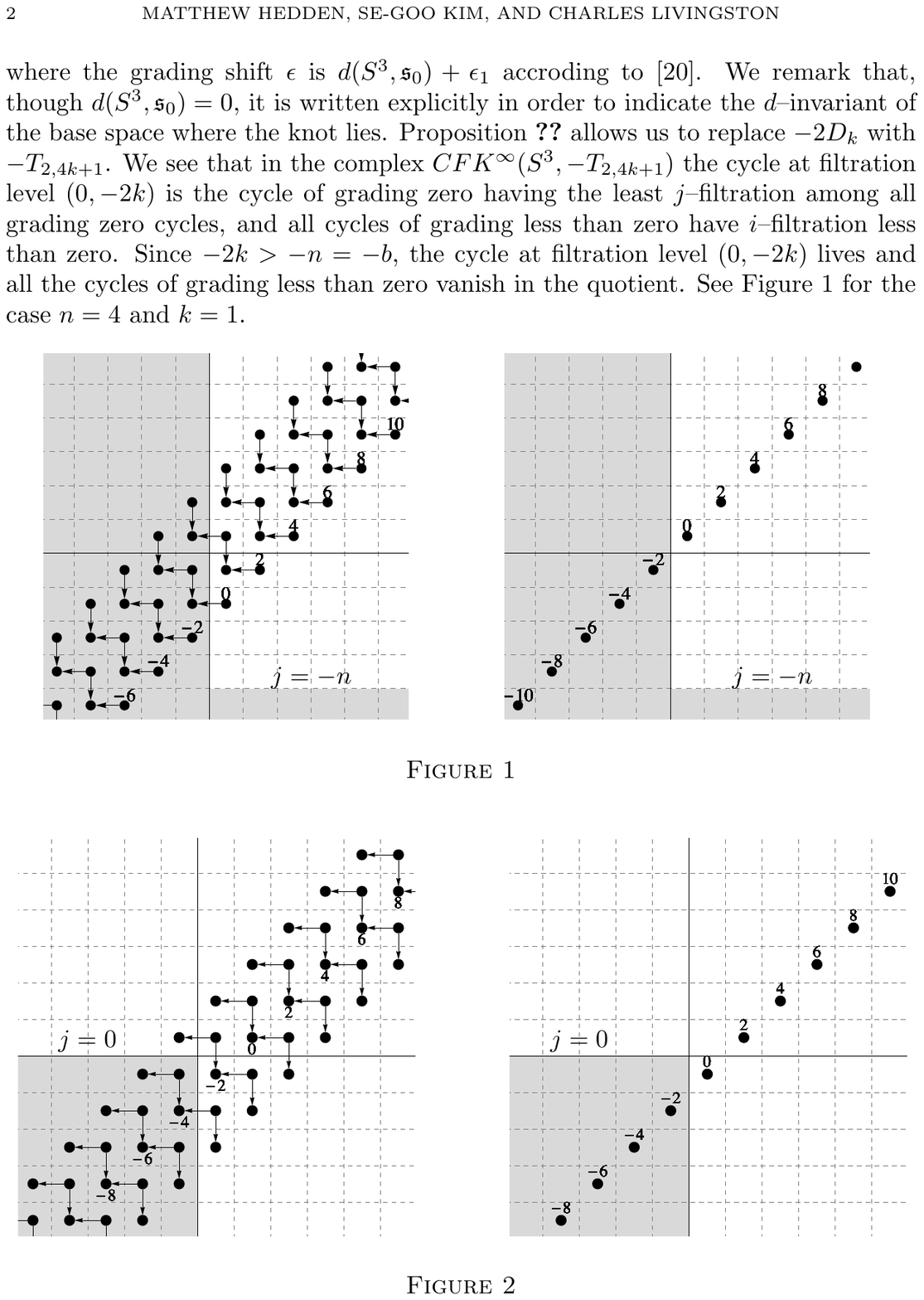} 
 \caption{}\label{fig:cfk_quotient2}
\end{figure}

This implies Equation~\ref{deq3}.
Combining Theorems~\ref{thmsuu} and \ref{dsurgthm}, a similar argument as done
above provides the proof of Equation~\ref{deq4}.

\end{proof}

Combining the equations in Theorem~\ref{diff_of_d_thm} immediately yields the following proposition, which was the key step in the completion of the proof of Theorem~\ref{main1} at the end of  Section~\ref{sectionexamples}.

\begin{prop} \label{prop:dmn}
$d(M(K_{D_{k},n}),\spinc_n)-d(M(K_{U,n}),\spinc_n)=-2k$.
\end{prop}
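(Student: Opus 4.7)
The plan is to observe that Proposition~\ref{prop:dmn} is a purely algebraic consequence of the four equations~(\ref{deq1})--(\ref{deq4}) already established in Theorem~\ref{diff_of_d_thm}, together with the identification $M(K_{D_k,n}) = S^3_{-2n,2n}(-2D_k,2D_k)$ and $M(K_{U,n}) = S^3_{-2n,2n}(-U,U)$ from Section~\ref{prelimsec}.

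First I would subtract Equation~(\ref{deq2}) from Equation~(\ref{deq1}). Because the grading shift $\epsilon_1$ is a homological invariant depending only on $n$ (and the particular \Sp\ structure index), the same $\epsilon_1$ appears in both equations, and the reference term $d(S^3,\spinc_0)=0$ cancels. This yields
\[
d(S^3_{-2n}(-2D_k),\spinc_{-n}) \;=\; d(S^3_{-2n}(-U),\spinc_{-n}).
\]
In words: passing from the unknot to $-2D_k$ as the surgery knot does not change the relevant $d$--invariant on the intermediate three-manifold, a reflection of the fact that both complexes look the same (modulo acyclic summands and a $j$-shift) in the \Sp\ structure $\spinc_{-n}$.

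Next I would subtract Equation~(\ref{deq4}) from Equation~(\ref{deq3}). Again, since $\epsilon_2$ depends only on $m=n$ and $N=2n$ and not on the particular knot chosen for the second surgery step, the $\epsilon_2$'s cancel, leaving
\[
d(S^3_{-2n,2n}(-2D_k,2D_k),\spinc_n) - d(S^3_{-2n,2n}(-U,U),\spinc_n)
\;=\; -2k + \bigl[d(S^3_{-2n}(-2D_k),\spinc_{-n}) - d(S^3_{-2n}(-U),\spinc_{-n})\bigr].
\]
Substituting the identity from the previous step, the bracketed difference vanishes, giving exactly
\[
d(M(K_{D_k,n}),\spinc_n) - d(M(K_{U,n}),\spinc_n) \;=\; -2k,
\]
which is the statement of the proposition.

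There is no real obstacle here, since all of the technical content has been packaged into Theorem~\ref{diff_of_d_thm}. The only subtle point I would flag explicitly in writing is the assertion that the grading shifts $\epsilon_1$ and $\epsilon_2$ cancel in the two subtractions; this relies on the \emph{independence of the shifts from the knot}, which in turn is a consequence of the uniform statement in Theorem~\ref{dsurgthm} that the grading shift depends only on $m$ and $N$. Once that is noted, the proof is a two-line cancellation.
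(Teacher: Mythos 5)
Your proposal is correct and is exactly the intended argument: the paper states that Proposition~\ref{prop:dmn} follows immediately by combining the four equations of Theorem~\ref{diff_of_d_thm}, and your two subtractions (with the knot-independence of $\epsilon_1,\epsilon_2$ justifying the cancellations) are precisely that combination.
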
\qed
 \vskip.01in

%%%%%%%APPENDIX%%%%%%%%%%%%%%
%%%%%%%APPENDIX%%%%%%%%%%%%%%
%%%%%%%APPENDIX%%%%%%%%%%%%%%

\begin{appendix}%zzz

%%%%%%%SECTION%%%%%%%%%%%%%%
\section{The infinity complex of the Whitehead doubled trefoil}\label{appendixcfkd}
Let $D$ denote the positive-clasped untwisted Whitehead double of the right-handed trefoil.  In this  appendix we prove:\vskip.1in

\noindent{\bf Proposition~\ref{prop:DandT23}.} {\it 
 The chain complex $\cfk^\infty(S^3,D)$ is chain homotopy equivalent to
 the chain complex $\cfk^\infty(S^3,T_{2,3})\oplus A$, where $A$ is an acyclic
 complex. The presence of the acyclic summand does not change the %relevant 
 width:  
 $$\Wd( \cfk^\infty(S^3,D))  = \Wd(\cfk^\infty(S^3,T_{2,3})) .$$}\vskip.1in

In order to prove this proposition, we need the following well-known lemma
about how a basis change affects the two-dimensional diagram of a knot Floer
complex. See \cite[Lemma~6.1]{hedden1} for instance.
\begin{lemma} \label{lemma:basischange}
Let $C_\ast$ be a knot Floer complex with a two-dimensional arrow diagram
 $D$ given by an $\ff$--basis.
 Suppose that $x,y$ are two basis elements of the same grading such that each
 of the $i$ and $j$ filtrations of $x$ is not greater than that of $y$. Then
 the basis change given by $y'=y+x$ gives rise to a diagram $D'$ of $C_\ast$
 which differs from $D$ only at $y$ and $x$ as follows:
\begin{itemize}
 \item Every arrow from some $z$ to $y$ in $D$ adds an arrow from $z$ to
   $x$ in $D'$.
  \item Every arrow from $x$ to some $w$ in $D$ adds an arrow from $y'$ to
   $w$ in $D'$.
 \end{itemize}
\end{lemma}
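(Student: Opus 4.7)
The plan is a direct algebraic verification. The arrow diagram $D$ records, for each ordered pair of basis elements $(a,b)$, whether the coefficient of $b$ in $\partial a$ is $0$ or $1$ in $\ff$. The statement is a bookkeeping exercise describing how these coefficients transform under the change of basis $\{e_1,\ldots,e_k,x,y\} \to \{e_1,\ldots,e_k,x,y'\}$ with $y' = x + y$, whose inverse, over $\ff$, is $y = y' + x$.

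To obtain $D'$ from $D$, I will compute $\partial b$ in the new basis for each new basis vector $b$ by taking the old expression for $\partial b$ and substituting $y \mapsto y' + x$. For any $e_i \ne x, y$, write $\partial e_i = \alpha_x^{(i)} x + \alpha_y^{(i)} y + \sum_j \alpha_j^{(i)} e_j$ in the old basis; substitution gives $\partial e_i = \bigl(\alpha_x^{(i)} + \alpha_y^{(i)}\bigr) x + \alpha_y^{(i)} y' + \sum_j \alpha_j^{(i)} e_j$ in the new basis, so the coefficient of $x$ flips exactly when $\alpha_y^{(i)} = 1$, i.e., exactly when $D$ has an arrow $e_i \to y$. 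This is the first bullet. For $b = x$ itself, the hypothesis that $x$ and $y$ share the same homological grading forces $\partial x$ to have no $y$-component, since $\partial$ strictly lowers homological grading; thus the arrows out of $x$ are unchanged. For $b = y'$, one has $\partial y' = \partial x + \partial y$; the same grading hypothesis rules out a $y$-component in either summand, so the coefficient of each $e_j$ in $\partial y'$ is the sum of the coefficients of $e_j$ in $\partial x$ and $\partial y$, yielding the second bullet.

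The one place where the full hypothesis is actually used is not the algebra above but the compatibility of the new basis with the $\zz \oplus \zz$-filtration on $C_\ast$: the element $y' = y + x$ has bifiltration equal to the componentwise maximum of the bifiltrations of $y$ and $x$, and this maximum equals the bifiltration of $y$ precisely when each of the $i$- and $j$-filtrations of $x$ is at most that of $y$. This ensures that $\{e_1,\ldots,e_k,x,y'\}$ remains a filtered basis and that $D'$ is a valid filtered arrow diagram. The main obstacle is minimal: one need only track that ``adds an arrow'' in the bullets is a toggle operation, since we are working over $\ff$, and that an old arrow with target $y$ becomes an arrow with target $y'$ (together with the new arrow to $x$ asserted by the first bullet). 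There is no geometric input beyond the filtration-compatibility check.
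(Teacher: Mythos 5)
Your proof is correct and follows essentially the same route as the paper's: rewrite $y = y' + x$ over $\ff$, note $\partial y' = \partial y + \partial x$, and read off the two bullets. You additionally spell out why the new basis remains filtered (the bifiltration of $y'$ coincides with that of $y$ under the hypothesis on $x$) and why arrows out of $x$ are unchanged (no $y$-component in $\partial x$ by grading), details the paper leaves implicit, but these are elaborations rather than a different method.
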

\begin{proof}
 First note that this basis change does not alter the grading or double
 filtrations.
 If $\partial z=y+\alpha$ for $z,\alpha \in C_\ast$, then $\partial z=y'+x+\alpha$, which
 shows that every arrow from $z$ to $y$ should add an arrow from $z$ to
 $x$. Since $\partial y'=\partial y+\partial x$, every arrow from $x$
 should add an arrow from $y'$.
 See Figure~\ref{figure:cfkhat_basechange} for an example.
\end{proof}

\begin{figure}[h]
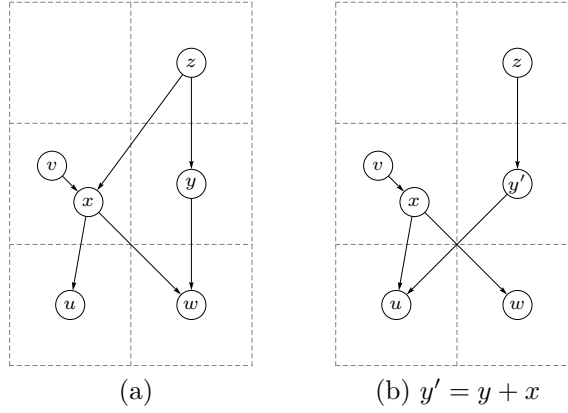

\fig{.5}{figure13}
\caption{The figure represents the effect of a filtered basis change to a portion of a $\Z\oplus\Z$ filtered chain complex over $\F$.}\label{figure:cfkhat_basechange}
\end{figure}
 
\begin{proof}[Proof of Proposition~\ref{prop:DandT23}]
Theorem~1.2 of \cite{hedden1} shows that
 \[
 \widehat{\hfk}_\ast(D,j)=
 \begin{cases}
  \ff^2_{(-1)}\oplus \ff^2_{(0)}, & j=1 \\
  \ff^4_{(-2)}\oplus \ff^3_{(-1)}, & j=0 \\
  \ff^2_{(-3)}\oplus \ff^2_{(-2)}, & j=-1 \\
  0, & \text{otherwise}.
 \end{cases}
 \]
 We assign $\ff$--bases to each summand in the direct sum decomposition as follows:
 \[
 \widehat{\hfk}_\ast(D,j)=
 \begin{cases}
  \langle u_1,u_2\rangle\oplus \langle x_1,x_2\rangle, & j=1 \\
  \langle v_1,v_2,v_3,v_4\rangle\oplus \langle y_1,y_2,y_3\rangle, & j=0 \\
  \langle w_1,w_2\rangle\oplus \langle z_1,z_2\rangle, & j=-1.
 \end{cases}
 \]
 Following Rasmussen~\cite[Lemma~4.5]{ras:thesis} (or
 \cite[Lemma~5.3]{hedden1}), $\widehat{\hfk}_\ast(D)$
 is chain homotopy equivalent to   $\widehat{\cfk}(D)$.
 So we assume that $\cfk^\infty(D)_{0,j}=\widehat{\hfk}_\ast(D,j)$ and 
 $\cfk^\infty(D)_{i,j}\cong U^{-i}\cfk^\infty(D)_{0,j-i}
 =\widehat{\hfk}_{\ast-2i}(D,j-i)$.
 If necessary, we put the grading in the superscript of
 the generator; for instance, $x_1^2$ denotes the grading 2 generator among
 $U^i x_1$ for $i\in \zz$.  See Figure~\ref{figure:cfkhat_D} for an example.

 First note that there are no components of boundary maps between
 generators of the same $(i,j)$--filtration since they would be reduced in
 $\widehat{\hfk}_\ast(D,j)$.  If we denote the vertical, horizontal, and
 diagonal components of the boundary map $\partial$ of $\cfk^\infty(D)$ by
 $\partial_V$, $\partial_H$, and $\partial_D$, respectively, then
 $\partial=\partial_V+\partial_H+\partial_D$.  We will determine $\partial$
 by first determining  $\partial_V$, then $\partial_H$,  and  lastly $\partial_D$.

 Note that $\ff^2_{(0)} \stackrel{\partial_V}{\to} \ff^3_{(-1)}
 \stackrel{\partial_V}{\to} \ff^2_{(-2)}$, or, $\langle x_1,x_2\rangle
 \stackrel{\partial_V}{\to} \langle y_1,y_2,y_3\rangle
 \stackrel{\partial_V}{\to} \langle z_1,z_2\rangle$ is a chain subcomplex
 of $\widehat{\cfk}(D)$ since $\partial$ lowers the grading by one. Since
 $\widehat{\hf}(S^3)=\ff_{(0)}$, by changing basis we  may assume that $\partial_V(x_1)=\partial_V(y_1)=
 \partial_V(z_1)= \partial_V(z_2)=0$, $\partial_V(x_2)=y_1$, $\partial_V(y_2)=z_1$, and $\partial_V(y_3)=z_2$.
 See Figure~\ref{figure:cfkhat_D}(b).
 
\begin{figure}[h]
\fig{.5}{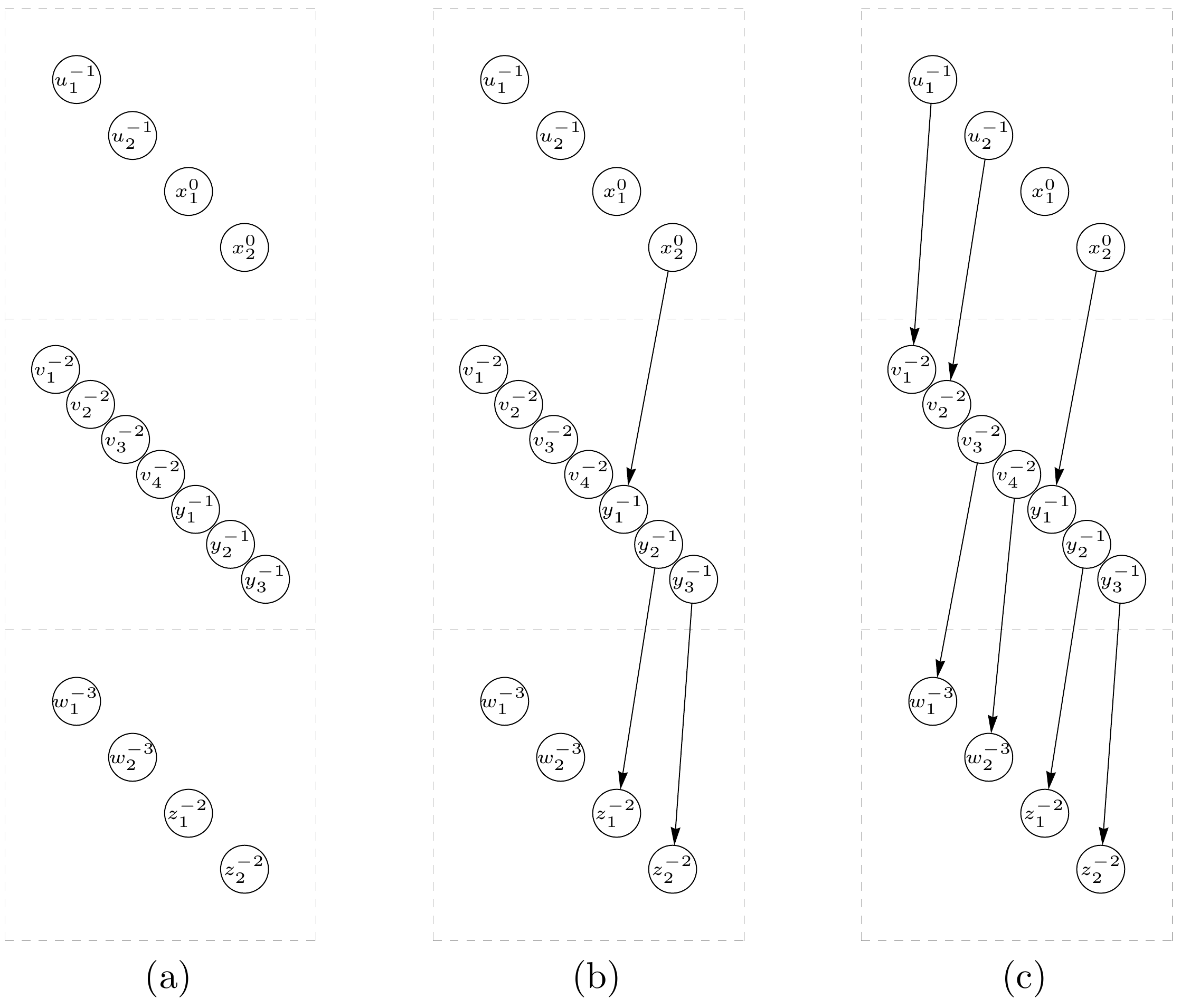}
\caption{}\label{figure:cfkhat_D}
\end{figure}

 We will find $\partial_V(u_1)$, which must lie in $\langle
 v_1,v_2,v_3,v_4,z_1,z_2\rangle$. If $\partial_V(u_1)=az_1+bz_2\in \langle
 z_1,z_2\rangle$ for $a,b\in\ff$, then $u_1+ay_1+by_2$ represents a
 nontrivial element of grading $-1$ in $\widehat{HF}(S^3)$, which is
 impossible. Thus $\partial_V(u_1)$ must have a nontrivial component in
 $\langle v_1,v_2,v_3,v_4\rangle$, which may be assumed to be $v_1$ by
 changing the basis for $\langle v_1,v_2,v_3,v_4\rangle$. If
 $\partial_V(u_1)=v_1+az_1+bz_2$, then the change of basis $v'_1=v_1+az_1+bz_2$
 gives rise to $\partial_V(u_1)=v'_1$ and $\partial_V v'_1=\partial_V v_1$,
 as in Lemma~\ref{lemma:basischange}. So we may assume that
 $\partial_V u_1=v_1$ and  similarly that $\partial_V u_2=v_2$.
 The image of $\langle v_3,v_4\rangle$ under $\partial_V$ should be equal
 to $\langle w_1,w_2\rangle$ since $\widehat{HF}(S^3)=\zz$ in which
 $v_3,v_4,w_1$ and $w_2$ should vanish.
 So $\{\partial_V(v_3), \partial_V(v_4)\}$ is a basis for $\langle
 w_1,w_2\rangle$ and we may assume that $w_1=\partial_V(v_3)$ and
 $w_2=\partial_V(v_4)$. The vertical components of the boundary maps are
 all determined as shown in Figure~\ref{figure:cfkhat_D}(c).

 Next, we will determine the horizontal components of the boundary map of
 $\cfk^\infty(D)$, whose columns look like
 Figure~\ref{figure:cfkhat_D}. We will argue that the complex will have a
 two-dimensional illustration described in Figure~\ref{figure:cfk_D}.
By analogy with the vertical case, note that $\langle z_1,z_2\rangle
 \stackrel{\partial_H}{\to} \langle y_1,y_2,y_3\rangle
 \stackrel{\partial_H}{\to} \langle x_1,x_2\rangle$ is a chain subcomplex
 $S$ of $\cfk^\infty(D)_{\{j\le 0\}}/\cfk^\infty(D)_{\{j<0\}}$, since
 $\partial$ lowers the degree by one.  Observe as well that for any $s \in
 S$, elements with grading one lower than $s$ are either to the left or below and
 hence $\partial s =\partial_V s+\partial_H s$. In particular there are no
 diagonal components of the boundary maps restricted to $S$. This implies
 that $\partial x_1=\partial_V x_1=0$ and $\partial x_2=\partial_V
 x_2=y_1$.

 Since $\widehat{HF}(S^3)\cong\ff_{(0)}$ is isomorphic to
 $H_\ast\left(\cfk^\infty(D)_{\{j\le 0\}}/\cfk^\infty(D)_{\{j<0\}}\right)$,
 we   may choose an $\ff$--basis $\{z_1,z_2\}$ so that $\partial_H(z_1)=0$. To
 keep the same vertical description as in Figure~\ref{figure:cfkhat_D}(c),
 we adjust the basis for $\langle y_2,y_3\rangle$ accordingly.
 Observe that $\partial z_2$ is the source of  no diagonal arrows, since elements with grading one lower  are located only to the left. So we have $\partial z_2\in \langle
 y_1,y_2,y_3\rangle$. If $\partial z_2$ is of the form $y_2+\beta$ for $\beta \in\langle
 y_1,y_3\rangle$, then $0=\partial^2 z_2=\partial y_2+\partial
 \beta =z_1+\partial_H y_2+\partial \beta $, which, on the other hand,
 can never be zero since $\partial_H y_2\in\langle x_1,x_2\rangle$, $\partial
 \beta \in\langle \partial y_1,\partial y_3\rangle \in \langle z_2,x_1,x_2\rangle$, and
 $z_1$ does not belong to $\langle x_1,x_2,z_2\rangle$. Thus $y_2$ does not appear in $\partial z_2$. Similarly,  $y_3$  does not appear in $\partial z_2$, and thus
 $\partial z_2$ must be $y_1$.

 Similarly, grading considerations and the fact that the homology of the quotient $\cfk^\infty(D)_{\{j\le m\}}/\cfk^\infty(D)_{\{j<m\}}$ is $\F_{(2m)}$  implies that $\partial_H\langle y_2,y_3\rangle = \langle
 x_1,x_2\rangle$. If $\partial_H y_2$ is of the form $x_2+ax_1$ for
 $a\in\ff$, then $\partial y_2=(\partial_H +\partial_V)y_2=x_2+ax_1+z_1$
 and $0=\partial^2 y_2=\partial (x_2+a x_1+z_1)=y_1\ne 0$, which is
 impossible. Thus we have $\partial_H y_2=x_1$. Then $\partial_H y_3$ should
 be of the form $x_2+ax_1$. By the change of  basis $x_2'=x_2+ax_1$ we may assume
 $\partial_H y_3=x_2$.

 To complete the analysis of $\partial_H$ we must consider the $w, v $ and $ u$ generators.  
   The argument is similar to what we have done already.  
First notice that these elements might not generate a subcomplex of the horizontal complex; $\partial_H(w_i^j) $ could contain  terms of the form $x_k^{j-1}$ (which are at the same $j$--filtration level  but at $i$--filtration two lower).  A change of basis, adding some of the elements  $x_k^{j-1}$ to some of the $v_i^{j-1}$, eliminates this possibility, at the expense of perhaps adding diagonal maps.  Since the change of basis combines elements at different $i$--filtration levels, the vertical map is unchanged.   Thus, we can assume that the $w, v$ and $u$ generate a subcomplex of the horizontal complex which is complementary to the subcomplex generated by the $x, y$, and $z$ generators.
 Using the fact that $\partial^2(u_k^{i-1}) = 0$ we conclude that $\partial_H$ must vanish on the $v_1^i$ and $v_2^i$.   
 
 Using the known  homology of the horizontal complex (in particular, that the horizontal homology at $j$--filtration level $0$ is generated by a single element at grading 0, and thus a $z_i$) we can conclude that $\partial_H$ maps the subgroup generated by $v_3^0$ and $v_4^0$   isomorphically to the subgroup generated by $u_1^{-1}$ and $u_2^{-1}$, and similarly for their $U$ translates.  

   A change of basis among the $u_1$ and $u_2$ generators ensures each $v_3$ maps to the corresponding $u_1$ and each $v_4$ maps to a corresponding $u_2$.   A change of basis among the $v_1$ and $v_2$ elements reestablish that $\partial_V$ maps each $u_1^i$ and $u_2^i$ to a $v_1^{i-1} $ and $v_2^{i-1}$, respectively.  That $\partial^2(u_k^i) = 0$ implies that $w_i^j$ maps horizontally to a corresponding $v_i^{j-1}$.
 
 At this point we have a diagram for $\cfk^\infty(D)$ as in
 Figure~\ref{figure:cfk_D} with only vertical and horizontal components of
 the boundary maps shown.

\begin{figure}[h]
\fig{.42}{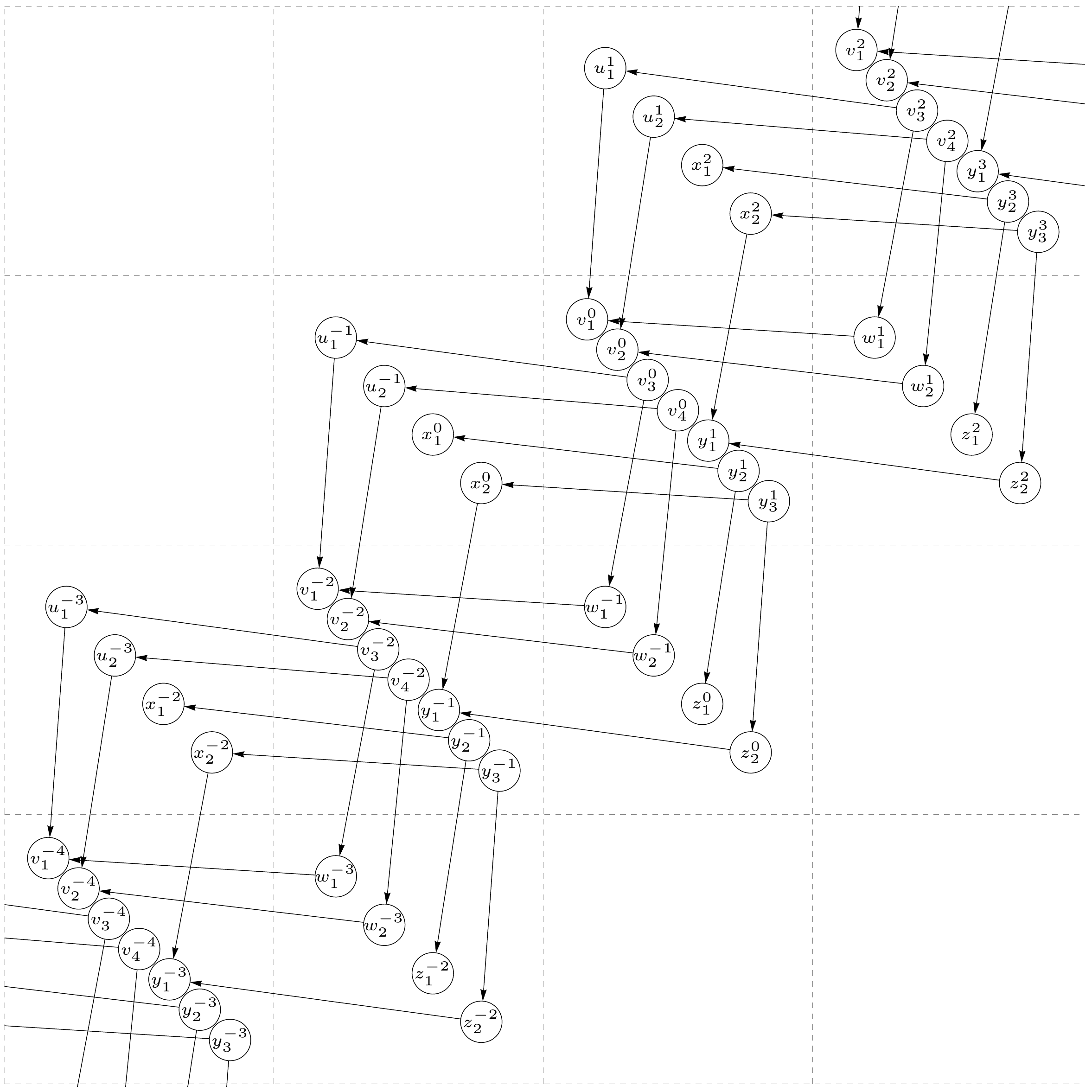} \\
\caption{}\label{figure:cfk_D}
\end{figure}
 
Finally, we will deal with the diagonal components of the boundary maps.
As mentioned earlier, due to grading constraints there are no diagonal maps
coming from the $x$, $y$, or $z$ generators,  while there may be diagonals
going in. On the other hand, there are no diagonal maps going into the $u$, $v$, or $w$ generators. All possible cases of diagonal maps
are: (1) from $u$'s to $x$'s, (2) from $v$'s to $y$'s, and (3) from $w$'s
to $z$'s.  This implies that the complex $T$ generated by $x_1$, $y_2$ and
$z_1$ is indeed a subcomplex of $\cfk^\infty(D)$.

We will show that filtered basis changes can eliminate all the diagonal
arrows going into $T$. Then $\cfk^\infty(D)$ splits into $\ff[U,U^{-1}]\otimes T$ and a
subcomplex $A$. Note that $\ff[U,U^{-1}]\otimes T$ is isomorphic to
$\cfk^\infty(T(2,3))$ and $A$ is acyclic (that is, $H_\ast(A)=0$). This follows from \cite[Section 10]{oz:hf-properties}, which showed $HF^\infty(S^3)\cong
 \ff[U,U^{-1}]$ and $HF^\infty(S^3)\cong H_\ast(\cfk^\infty(D))\cong
 H_\ast(\ff[U,U^{-1}]\otimes T)$ as $\ff[U,U^{-1}]$--modules.

 First, we show that $\partial v_1$ and $\partial v_2$ cannot include
 $y_2$. Note that they have zero vertical and horizontal components. If
 $\partial v_1=y_2+ay_1+by_3$ for $a,b\in\ff$, then $0=\partial^2
 v_1=x_1+z_1+bx_2+bz_2$ which cannot be zero for any $a,b$. So there are no
 arrows from $v_1$ or $v_2$ to $y_2$.

 We claim that, for any $a,b\in\ff$ and $i=1,2$,
 the following are equivalent:
 \begin{enumerate}
\item $\partial_D u_i=ax_1+bx_2$
\item $\partial_D v_{i+2}=ay_2+by_3+cy_1$ for some $c\in\ff$
\item $\partial_D w_i=az_1+bz_2$.
 \end{enumerate}
 
 We prove the claim only for $i=1$; almost the same argument applies to $i=2$.
 Let $\partial_D u_1 =ax_1+bx_2$, $\partial_D v_{3} =
 c_1y_1+c_2y_2+c_3y_3$, and $\partial_D w_1= d_1z_1+d_2z_2$ for
 $a,b,c_\ast, d_\ast\in\ff$. The constraint $\partial^2=0$ gives rise to
 the equalities
 \begin{align*} 
  0 &= \partial^2 v_3=\partial(u_1+w_1+c_1y_1+c_2y_2 +c_3y_3)\\
   &= (v_1+ax_1 +
  bx_2)+ (v_1+d_1z_1 + d_2z_2)+ c_2 (x_1+z_1) + c_3(x_2+z_2) \\
  &= (a+c_2)x_1+(b+c_3)x_2+(d_1+c_2)z_1+(d_2+c_3)z_2.
 \end{align*}
 Thus $a=c_2=d_1$ and $b=c_3=d_2$.
 
 Suppose $a=1$ for $i=1$. Let $v_1'=v_1+x_1$ and $w_1'=w_1+y_2$. Then
 all arrows going into $v_1$ or $w_1$ come from $u_1$, $w_1$ or $v_3$ and
 hence we need to check the boundaries of $u_1$, $v_3$, $w'_1$ and $v'_1$:
  $\partial u_1=v'_1+bx_2$, $\partial v_3=u_1+w'_1+by_3+cy_1$, $\partial
 w'_1=\partial w_1+\partial y_2=(v_1+z_1+bz_2)+(x_1+z_1)=v_1+bz_2$, and
 $\partial v'_1=\partial v_1+\partial x_1=0$. With these new basis elements
 $v'_1$ and $w'_1$ there are no diagonal components from $u_1, v_3, w_1$ to
 $x_1,y_2,z_1$.
 A similar argument works for $u_2,v_4,w_2$. Thus $T$ can be assumed to be a
 direct summand as desired.
\end{proof}
We remark that a similar process of changing bases as in the previous proof can be used to prove that $\cfk^\infty(S^3,D)$ is isomorphic to the complex in 
Figure~\ref{figure:cfk_D}. Since this result is unnecessary for our purposes or any foreseeable applications to concordance we leave it as an exercise for the interested reader.

%%%%%%%%%%%%%APPENDIX SECTION %%%%%%
 %%%%%%%SECTION%%%%%%%%%%%%%%

\section{ $CFK^\infty(S^3, T_{2,2k+1})$} \label{appendixt22k+1}
\begin{theorem}\label{appendixt22k+1thm} $CFK^\infty(S^3 , T_{2,3})^{\otimes k} = CFK^\infty(S^3, T_{2,2k +1}) \oplus A$ where $A$ is acyclic.  The presence of the acyclic summand does not change the %relevant 
width:  $$\Wd(CFK^\infty(S^3, T_{2,2k +1}))  = \Wd(CFK^\infty(S^3, T_{2,2k +1})).$$

\end{theorem}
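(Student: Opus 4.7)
The proof plan is to induct on $k$. The base case $k=1$ is tautological ($A = 0$). For the inductive step, tensor the inductive hypothesis $\cfk^\infty(S^3,T_{2,3})^{\otimes k} \simeq \cfk^\infty(S^3,T_{2,2k+1}) \oplus A_k$ on the right with $\cfk^\infty(S^3,T_{2,3})$. Since $\cfk^\infty(S^3,T_{2,3})$ is free over $\ff[U,U^{-1}]$, the summand $A_k \otimes \cfk^\infty(S^3,T_{2,3})$ stays acyclic, so the whole question reduces to the key lemma
$$S_k \otimes S_1 \;\simeq\; S_{k+1} \oplus A'$$
with $A'$ acyclic, where $S_n$ denotes the reduced staircase model of $\cfk^\infty(S^3,T_{2,2n+1})$ over $\ff$.

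For the key lemma I would exhibit an explicit filtered basis of $S_k \otimes S_1$. Label the staircase generators of $S_n$ as $\alpha_0,\ldots,\alpha_{2n}$, with $\alpha_{2i}$ at filtration $(i, n-i)$, $\alpha_{2i+1}$ at $(i+1, n-i)$, and $\partial \alpha_{2i+1} = \alpha_{2i} + \alpha_{2i+2}$. Write $a, b, c$ for the generators of $S_1$ at $(0,1), (1,1), (1,0)$. The $S_{k+1}$ subcomplex is generated by
$$\beta_0 = \alpha_0 \otimes a,\; \beta_1 = \alpha_0 \otimes b,\; \beta_{2l} = \alpha_{2l-2}\otimes c\ (1 \le l \le k+1),\; \beta_{2l+1} = \alpha_{2l-1}\otimes c\ (1 \le l \le k);$$
one checks immediately that $\partial \beta_1 = \beta_0 + \beta_2$ and $\partial \beta_{2l+1} = \beta_{2l} + \beta_{2l+2}$, so these $2k+3$ generators carry the $T_{2,2k+3}$ staircase structure. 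The acyclic complement $A'$ would be a direct sum of $k$ ``Koszul boxes'', indexed by $i = 1,\ldots,k$, each with four generators
$$y^i_4 = \alpha_{2i-1}\otimes b,\; y^i_1 = \alpha_{2i-2}\otimes b + \alpha_{2i-1}\otimes a,$$
$$y^i_3 = \alpha_{2i}\otimes b + \alpha_{2i-1}\otimes c,\; y^i_2 = \alpha_{2i-2}\otimes c + \alpha_{2i}\otimes a$$
placed at the four corners of a $2\times 2$ filtration square, with $\partial y^i_4 = y^i_1 + y^i_3$, $\partial y^i_1 = \partial y^i_3 = y^i_2$, and $\partial y^i_2 = 0$, so each box is manifestly acyclic.

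The main technical point, and the place to be careful, is verifying that $\{\beta_\ast\}\cup\{y^i_\ast : 1 \le i \le k\}$ is actually a filtered basis of $S_k \otimes S_1$. Consecutive boxes share a tensor monomial: for instance $y^i_3$ and $y^{i+1}_1$ both contain $\alpha_{2i}\otimes b$, and at the shared filtration position $(i+1, k-i+1)$ the original complex supports three monomials $\{\alpha_{2i+1}\otimes a,\, \alpha_{2i}\otimes b,\, \alpha_{2i-1}\otimes c\}$. One must check that the three basis elements placed there, namely $\beta_{2i+1}=\alpha_{2i-1}\otimes c$, $y^i_3$, and $y^{i+1}_1$, span this three-dimensional $\ff$-vector space. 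This reduces to a uniform linear-algebra verification at each filtration position (the corners $(0,k+1)$, $(k+1,0)$, the ``extra'' positions $(i+1, k-i+2)$ carrying only $y^i_4$, and the interior positions carrying one or more of the $\beta$'s and $y$'s). Once the basis is in hand, $S_k \otimes S_1 \simeq S_{k+1} \oplus A'$ as filtered chain complexes by construction, completing the inductive step.

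For the width claim (which I read as $\Wd(\cfk^\infty(S^3,T_{2,3})^{\otimes k}) = \Wd(\cfk^\infty(S^3,T_{2,2k+1}))$, assuming the displayed identity is a typo), I would simply apply the tensor product formula $\Wd(C_1 \otimes C_2) = \Wd(C_1) + \Wd(C_2) - 1$ quoted earlier in the paper: since $\Wd(S_1) = 3$, iterating gives $\Wd(S_1^{\otimes k}) = 3 + 2(k-1) = 2k+1 = \Wd(S_k)$.
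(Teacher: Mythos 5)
Your proposal is correct and follows essentially the same route as the paper: an inductive reduction to the two-term tensor $S_k\otimes S_1$, with an explicit filtered change of basis splitting off $k$ acyclic two-by-two ``boxes'' (your $y^i_1,\dots,y^i_4$ are precisely the paper's $\alpha_{i-1}$, $\beta_{i-1}$, $\gamma_i$, and $[y,i,j]\otimes[w,1,1]$, respectively) leaving a staircase summand. You also correctly diagnose that the displayed width equality in the theorem statement is a typographical error for $\Wd(CFK^\infty(S^3,T_{2,3})^{\otimes k}) = \Wd(CFK^\infty(S^3,T_{2,2k+1}))$, and your appeal to the additivity formula $\Wd(C_1\otimes C_2)=\Wd(C_1)+\Wd(C_2)-1$ settles it.
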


\begin{proof} The proof is by induction.  We show that $$CFK^\infty(S^3, T_{2,2k +1}) \otimes CFK^\infty(S^3 , T_{2,3}) =  CFK^\infty(S^3, T_{2,2k +3}) \oplus A.$$

The complex $ CFK^\infty(S^3, T_{2,2k +1}) $ has filtered generators at grading 0: $[x,i,j]$ where $i\ge 0$, $j\ge 0$ and $i+j = k$.  There are also generators at grading level 1, $[y,i,j]$ with $i\ge 1, j\ge 1$ and $i+j = k+1$.  The boundary map is given by $\partial [y,i,j] = [x,i-1,j] + [x, i,j-1]$.  (Notice that the symbols $x$ and $y$ do not correspond to intersection points in a Heegaard diagram.  The $i$ and $j$ denote the filtration levels.)

In order to distinguish the complex for $T_{2,3}$, we replace $x$ and $y$ with $z$ and $w$, so that the complex is generated by $[z,0,1], [z,1,0], $ and $[w,1,1]$.

The tensor product $CFK^\infty(S^3, T_{2,2k +1}) \otimes CFK^\infty(S^3 , T_{2,3})$ has generators of type $x \otimes z$ at grading level 0, $x\otimes w$ and $y \otimes z$ at grading level 1, and $y \otimes w$ at grading level 2.  In total there are $3(2k+1)$ generators. 

We now make a basis change, replacing certain generators with their sums with other generators, relabeled as indicated:

\begin{itemize}
\item $[x,i,j] \otimes [w,1,1] \to  [x,i,j] \otimes [w,1,1] + [y,i+1,j] \otimes [z, 0,1] = \alpha_i$, for all $0 \le i <k$.\vskip.1in
\item $[x,i,j] \otimes [z,1,0] \to  [x,i,j] \otimes [z,1,0] + [x,i+1,j-1] \otimes [z, 0,1] = \beta_i$, for all $0 \le i <k$.\vskip.1in
\item $[y,i,j] \otimes [z,1,0] \to  [y,i,j] \otimes [z,1,0] +[x,i,j-1] \otimes [w, 1,1] = \gamma_i$, for all $0 <  i \le k$.
\end{itemize}

Now we isolate out acyclic pieces, using the following four observations.

\begin{itemize}
\item $\partial ([y,i,j] \otimes [w,1,1]) = [x,i-1,j] \otimes [w,1,1] + [x,i,j-1] \otimes [w,1,1] + [y,i,j] \otimes [z,0,1] + [y,i,j] \otimes [z,1,0]  = \alpha_{i-1} + \gamma_i$.\vskip.1in

\item $\partial \alpha_{i-1} = \partial( [x,i-1,j] \otimes [w,1,1] + [y,i,j] \otimes [z, 0,1] )=[x,i-1,j] \otimes [z,0,1] + [x,i-1,j] \otimes [z,1,0] + [x,i-1,j] \otimes[z,0,1] + [x,i,j-1] \otimes [z,0,1] =
 [x,i-1,j] \otimes [z,1,0] +   [x,i,j-1]\otimes [z,0,1]= \beta_{i-1}$.\vskip.1in 
 
\item $\partial \gamma_i = [x,i-1,j] \otimes[z,1,0] + [x,i,j-1] \otimes [z,1,0] + [x,i,j-1]\otimes [z,0,1] + [x,i,j-1] \otimes[z,1,0] = 
[x,i-1,j] \otimes[z,1,0]  + [x,i,j-1] \otimes [z,0,1] = \beta_{i-1}$.\vskip.1in
\item $\partial \beta_{i-1} = 0$.  

\end{itemize}

From this we see that there is an acyclic {\it summand} $$\left<[y,i,j] \otimes [w,1,1] \right> \stackrel{\partial}{\to} \left< \alpha_{i-1},  \gamma_i \right> \stackrel{\partial}{\to} \left<\beta_i\right>.$$
For instance, see Figure~\ref{figuret25tensort23} for the case $k=2$.

\begin{figure}[h]
\fig{.5}{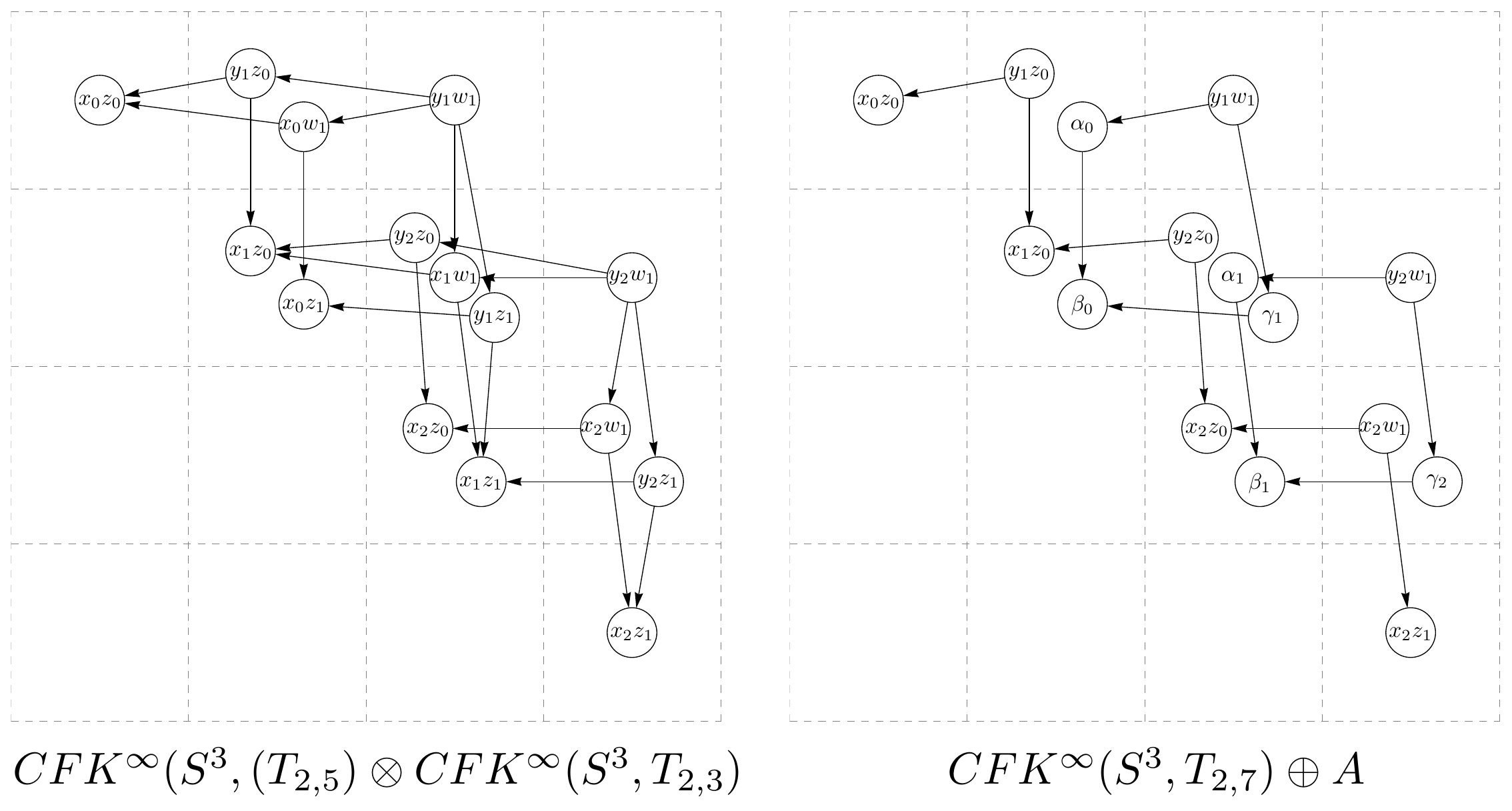}   \caption{
  Notation: $x_iz_{i'}=[x,i,k-i]\otimes [z,i',1-i']$,
  $x_iw_1=[x,i,k-i]\otimes[w,1,1]$, $y_iz_{i'}=[y,i,k+1-i]\otimes
 [z,i',1-i']$, $y_iw_1=[y,i,k+1-i]\otimes[w,1,1]$,
  $\alpha_i=x_iw_1+y_{i+1}z_0$, $\beta_i=x_iz_1+x_{i+1}z_0$, and
  $\gamma_i=y_iz_1+x_iw_1$.
  }
  \label{figuret25tensort23}
\end{figure}

There are $k$ such summands, with a total rank of $4k$.  The original complex had rank $3(2k+1) = 6k+3$.  Thus, splitting off the acyclic summands leaves a complex of rank $2k+3$.  Generators for a complement to the acyclic summand are given by the set    $\{ [x,i,j] \otimes [z,0,1],[y,i,j] \otimes[z,0,1] \}$ and two more elements, $[x,k,0]\otimes[w,1,1]$ and $[x,k,0]\otimes [z,1,0]$.  Finally, we note that this is a subcomplex of the desired isomorphism type, as follows from three simple observations:  $\partial ([x,i,j] \otimes [z,0,1]) = 0$, $\partial  ([y,i,j] \otimes [z,0,1]) = [x,i-1,j] \otimes [z,0,1]+ [x,i,j-1] \otimes [z,0,1]$ and $\partial ([x,k,0]\otimes [w,1,1]) = [x,k,0] \otimes [z,0,1] + [x,k,0] \otimes[z,1,0]$.
\end{proof}

Note that similar computations have recently appeared in \cite{Hom2}.
%%%%%%%%%%%%APPENDIX SECTION %%%%%%%%%
 %%%%SUBSECTION %%%%%%%%
\section{Number theoretic results}\label{appennumber}

\begin{theorem}
\label{thm:number}
There is an infinite set $\mathcal N$ of natural numbers $\{n_i\}$ satisfying:

\begin{enumerate}

\item  For all $n_i$, $ 4n_i^2 +1\ge 9 $ and   is either prime or the product of two distinct primes;  thus $ 4n_i^2 +1 $  is square free;\vskip.05in
\item The values $\{4n_i^2 +1\}$ are pairwise relatively prime. 

\end{enumerate}

\end{theorem}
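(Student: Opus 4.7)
The plan is to apply a theorem of Iwaniec on $P_2$-representations by irreducible quadratic polynomials, and then thin out an infinite subsequence by a greedy inductive argument to enforce pairwise coprimality.

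First I would verify that the polynomial $f(x) = 4x^2+1$ satisfies the hypotheses of Iwaniec's theorem: it is primitive, irreducible over $\mathbb{Z}$, has positive leading coefficient, and takes only odd values. Iwaniec then furnishes an infinite set $\mathcal{M} \subseteq \mathbb{N}$ with the property that $4n^2+1$ has at most two prime factors counted with multiplicity for every $n \in \mathcal{M}$. Since the equation $p^2 = 4n^2+1$ reduces to $(p-2n)(p+2n)=1$ and so forces $n=0$, no value $4n^2+1$ with $n\ge 1$ is a prime square; hence each $4n^2+1$ with $n\in\mathcal M$ is either prime or a product of two \emph{distinct} primes, and in either case squarefree. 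Removing the one stray value $n=1$ (where $4n^2+1=5<9$) yields an infinite subset of $\mathcal{M}$ already satisfying condition (1).

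Next I would construct $\mathcal{N} = \{n_1 < n_2 < \cdots\} \subseteq \mathcal{M}$ by induction so as to ensure condition (2). Pick $n_1\in \mathcal{M}$ arbitrarily. Given $n_1,\ldots,n_k$, let $Q_k$ be the product of the distinct primes dividing $\prod_{i\le k}(4n_i^2+1)$; this is a fixed odd integer. For each prime $p\mid Q_k$ the congruence $4x^2+1\equiv 0\pmod p$ has exactly two solutions, so the residue classes $a\pmod{Q_k}$ with $\gcd(4a^2+1, Q_k)=1$ (call these \emph{admissible}) form a nonempty union of $\prod_{p\mid Q_k}(p-2)$ classes mod $Q_k$. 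Fix any admissible $a$; the task is to locate $n_{k+1}\in\mathcal{M}$ with $n_{k+1}>n_k$ and $n_{k+1}\equiv a\pmod{Q_k}$, for then automatically $\gcd(4n_{k+1}^2+1,\,4n_i^2+1)=1$ for every $i\le k$.

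The main obstacle is producing $n_{k+1}$ in a prescribed admissible residue class. The bare form of Iwaniec's theorem only gives $|\mathcal{M}\cap[1,N]| \gg N/\log N$, which is too sparse to overcome by an elementary density argument: the set of $n$ with $\gcd(4n^2+1,Q_k)>1$ can itself have density bounded away from $0$, and as $k$ grows the forbidden density need not stay below $1$. The remedy is to invoke Iwaniec's theorem in its strengthened form uniform in arithmetic progressions to a modulus that can depend on (but is much smaller than) $N$; since $Q_k$ is fixed at the $k$th inductive stage, this yields that $\{n\in\mathcal{M}:n\equiv a\pmod{Q_k}\}$ is infinite for every admissible $a$. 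Selecting any such $n>n_k$ completes the induction step, and iterating produces the desired infinite set $\mathcal{N}$.
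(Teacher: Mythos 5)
Your approach diverges substantially from the paper's, and the divergence opens a real gap. You fix the single polynomial $4x^2+1$, extract the Iwaniec set $\mathcal{M}$, and then try to greedily choose a coprime subsequence by constraining $n_{k+1}$ to an admissible residue class modulo $Q_k$. You correctly diagnose the problem: the bare Iwaniec theorem gives no control over which residue classes mod $Q_k$ the almost-prime values $n$ land in, and as $k$ grows the ``bad'' density can approach $1$. Your remedy is to invoke Iwaniec's theorem ``in its strengthened form uniform in arithmetic progressions to a modulus that can depend on $N$.'' That is precisely where the proof breaks down: this is not a standard, citable form of the theorem, and you neither state it precisely nor prove it. Establishing uniformity in arithmetic progressions for the half-dimensional sieve arguments behind Iwaniec's result, with the modulus allowed to grow, is a nontrivial analytic-number-theoretic undertaking in its own right, not a routine corollary. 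As written, the inductive step of your construction rests on an unproved assertion.

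The paper avoids this difficulty entirely by a change of variables rather than a change of residue class. At step $k$, set $A = \prod_{i<k}(4n_i^2+1)$ and apply the \emph{basic} Iwaniec theorem to the new polynomial $G_A(x) = 4A^2x^2+1$ (still irreducible, positive leading coefficient, constant term odd). Any $N$ with $G_A(N)$ almost-prime yields $n_k = AN$, and then $4n_k^2+1 = 4A^2N^2+1 \equiv 1 \pmod{p}$ for every prime $p$ dividing $A$, so coprimality with all earlier terms is automatic. No uniformity in progressions is needed; the ``admissibility'' you were trying to engineer is baked into the polynomial. Your opening observations (verifying Iwaniec's hypotheses, ruling out $4n^2+1 = p^2$ via $(p-2n)(p+2n)=1$ to conclude squarefreeness, and discarding $n=1$) are correct and match the paper, but the heart of the argument -- producing the coprime subsequence -- needs to be replaced by the rescaling trick rather than patched with an unavailable strengthening of the input theorem.
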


\begin{proof}$\ $ A theorem of Iwaniec~\cite{iwaniec} states that  if $G(n) = an^2 + bn+c$ is an irreducible integer polynomial with $a >0$ and $c \equiv 1 \mod 2$, then there exist infinitely many $n$ such that $G(n)$ has at most two prime factors, counted with multiplicity.  
We will  apply this for $G(n)$ of the form $4A^2 n^2+1$, for appropriate values of $A >0$.  Notice that  any  $G(n)$ of this form is never a perfect square (for any $n >0$). Thus, by  Iwaniec's theorem we have that  for an infinite set of positive $n$,  $G(n)$ is either prime or a product of two distinct primes.  In particular, it is square free.

The $n_i$ are defined inductively, starting with  $n_1 = 2$, so $4n_1^2 +1 = 17$ is prime.    Suppose that for all $i <k$, values of $n_i$ have been selected so as to satisfy the conditions of the theorem.   Let $A$ denote the product of all   $4n_i^2 +1$, $i<k$.  Apply Iwaniec's theorem to choose an $N$ so that $4A^2N^2 +1$ is the product of at most two prime factors.  No prime factor of the $4n_i^2 + 1$, $i<k$, can divide this number, so $4A^2N^2 +1$ is relatively prime to  $4n_i^2 +1$ for all $i<k$.  Let $n_k = AN$.

\end{proof}

\end{appendix}

%%%%%%%BIBLIOGRAPHY%%%%%%%%%%%%%%

\vfill \eject
\newcommand{\etalchar}[1]{$^{#1}$}

\end{document}